\newcommand{\rien}[1]{}
\newcommand{\AVF}{ \operatorname{{\rm AVF}}}
\newcommand{\LieA}{ \operatorname{{\rm Lie}_{alg}}}
\newcommand{ \LieAO}{ \operatorname{{\rm Lie}_{alg}^\omega}}
\newcommand{\IVF}{ \operatorname{{\rm IVF}}}
\def\N{\mathbb{N}}
\def\Q{\mathbb{Q}}
\def\Z{\mathbb{Z}}
\def\C{\mathbb{C}}
\newtheorem{theorem}{Theorem}[section]
\newtheorem*{theorem*}{Theorem}
\newtheorem{corollary}[theorem]{Corollary}
\newtheorem*{corollary*}{Corollary}
\newtheorem{lemma}[theorem]{Lemma}
\newtheorem{proposition}[theorem]{Proposition}
\theoremstyle{definition}
\theoremstyle{definition}
\newtheorem{remark}[theorem]{Remark}
\theoremstyle{definition}
\newtheorem{definition}[theorem]{Definition}
\title[Lie algebra generated by LNDs on Danielewski surfaces]{Lie algebra generated by locally nilpotent derivations on Danielewski surfaces}
\author{Frank Kutzschebauch and Matthias Leuenberger}
\address{Institute of Mathematics, University of Bern, Sidlerstrasse 5, CH-3012 Bern, Switzerland}
\email{frank.kutzschebauch@math.unibe.ch}
\email{matthias.leuenberger@math.unibe.ch}
\begin{document}
\thanks{Both autors were partially supported by Schweizerischer Nationalfond Grant 200021-140235/1}
\subjclass[2010]{32M17, (32M05, 14R10, 14R20)}
\begin{abstract}
We give a full description of the Lie algebra generated by locally nilpotent derivations (short LNDs) on smooth Danielewski surfaces $D_p$ given by $xy=p(z)$.
In case $\deg(p)\geq 3$ it turns out to be not the whole Lie algebra $\mathrm{VF}_{alg}^\omega(D_p)$ of volume preserving algebraic vector fields, thus
answering a question posed
by \textsc{Lind} and the first author. Also we show algebraic volume density property (short AVDP) for a certain homology plane, a homogeneous space of the form
$SL_2 (\C) /N$, where $N$ is the normalizer of the maximal torus and another related example. 
\end{abstract}

\maketitle

\section{Introduction}
In  this paper we  study (using algebraic methods) the holomorphic automorphism group $\mathrm{Aut}_{hol}(D_p)$ of \textit{Danielewski surfaces} of the form
$D_p=\lbrace xy=p(z)\rbrace$. These surfaces are an object of intensive studies in affine algebraic geometry, see e.g. \cite{daigle1},\cite{daigle2},
 \cite{daniel}, \cite{dub4}, \cite{dub3}, \cite{dub2}, \cite{dub1}, \cite{freud}, \cite{makar1},\cite{makar2} and \cite{makar3},

The study of these surfaces from the complex analytic point of view started in the paper
of \textsc{Kaliman} and \textsc{Kutzschebauch } \cite{kkdensity},  where they proved the so called density property, or for short DP. This is a remarkable
property, discovered in 1990's  by \textsc{Anders\'en} and \textsc{Lempert}  \cite{A}, \cite{AL} for Euclidean spaces, that to a great extend 
compensates for the lack of partition of unity for holomorphic automorphisms. The terminology was
was introduced later by Varolin \cite{varo1}:  A Stein manifold $X$ has DP if the Lie algebra generated by completely integrable holomorphic
vector fields is dense (in the compact-open topology) in the space of all holomorphic vector fields on $X$. In the presence of DP one can 
construct global holomorphic automorphisms of $X$ with prescribed local properties. More precisely, any local phase flow on a Runge domain
in $X$ can be approximated by global automorphisms. Needless to say that this lead to remarkable consequences (see surveys \cite{state},
\cite{Rosay}). 

If $X$ is equipped with a holomorphic volume form $\omega$ (i.e. $\omega$ is a nowhere vanishing top holomorphic
differential form) then one can ask whether a similar approximation holds for automorphisms  and phase flows preserving $\omega$, so called \textit{volume
preserving automorphisms}.
Under a mild additional assumption the answer is yes in the presence of the volume density property (VDP) which means that the Lie algebra
generated by completely integrable holomorphic vector fields of $\omega$-divergence zero is dense in the space of all holomorphic vector fields of
$\omega$-divergence zero. 
 Danielewski surfaces carry a unique  nondegenerate algebraic 2-form $\omega$  and we will concentrate on the group $\mathrm{Aut}_{hol}^\omega(D_p)$ of volume
preserving holomorphic automorphisms.

The following definitions are due to \textsc{Varolin} and \textsc{Kaliman}, \textsc{Kutzschebauch}

\begin{definition}
We say that $X$ has the algebraic density property (ADP) is the Lie algebra $\LieA (X)$ generated by the set $\IVF (X)$ of completely integrable
algebraic vector fields coincide with the space $\AVF (X)$ of all algebraic vector fields on $X$. Similarly in the presence of $\omega$
we can speak about the algebraic volume density property (AVDP) that means the equality $\LieAO (X)=\AVF_\omega (X)$ for analogous 
objects (that is, all participating vector fields have $\omega$-divergence zero; say $\LieAO (X)$ is generated by $\IVF_\omega (X)$).
\end{definition}

It is worth mentioning that ADP and AVDP imply DP and VDP respectively (where the second implication is not that obvious) and
in particular all remarkable consequences for complex analysis on $X$.

The study of holomorphic automorphisms of Danielewski surfaces was continued  by \textsc{Lind},
 and the first author in  \cite{lind} where shear and overshear automorphisms were introduced, generalizing this notion introduced by \textsc{Rudin} and
\textsc{Rosay} from Euclidean spaces to Danielewski surfaces. 
Shears are volume preserving automorphisms whereas overshears are not. Note that the  algebraic shear vector fields are (up to coordinate change) exactly the
LNDs (see theorem \ref{shearlnd}). Generalizing the results of \textsc{Anders\'en} and \textsc{Lempert} it was proved in 
\cite{lind} that \smallskip

\textit{On a Danielewski surface the group generated by shears and overshears is dense in the path connected component of the group $\mathrm{Aut}_{hol}(D_p)$ of
holomorphic automorphisms with respect to the compact-open topology.}\

\smallskip\noindent
From the proof of DP in \cite{kkdensity} it follows that the group generated by shears, overshears and hyperbolic automorphisms is dense in
$\mathrm{Aut}_{hol}(D_p)$. The point in the above result was not to use hyperbolic automorphisms. The corresponding generalization of 
\textsc{Anders\'ens} and \textsc{Lemperts} result  in the volume preserving case, namely the question whether the group generated by shears is dense in the
group $\mathrm{Aut}_{hol}^\omega (D_p)$ of volume preserving holomorphic automorphisms with respect to the compact-open topology, remained an unsolved question 
(see \cite{lind} problem 5.1). 

In the present paper we solve the "infinitesimal
version" of this question to the negative. Namely we prove that  the algebraic shear vector fields {\bf do not generate} the Lie algebra
$\mathrm{VF}_{alg}^\omega(D_p)$ of algebraic volume preserving vector fields if the degree of the defining polynomial $p$ is at least $3$. More precisely we
prove the following statement:

\begin{corollary*}[see Corollary \ref{dense}] For $p\in\C[z]$ with degree $n \geqslant 3$ the Lie algebra generated by holomorphic shear fields is not dense
in the Lie
algebra of holomorphic volume preserving vector fields.
\end{corollary*}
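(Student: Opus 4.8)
The plan is to reduce the statement to a computation in the Poisson algebra of functions and to exhibit a single continuous linear functional that annihilates the shear fields but not all of $\VFHO(D_p)$. Since $D_p$ is a smooth affine, hence Stein, surface with $b_1(D_p)=0$, every closed algebraic (resp.\ holomorphic) $1$-form is exact; contracting with the nondegenerate $2$-form $\omega$ therefore identifies $\VFAO(D_p)$ (resp.\ $\VFHO(D_p)$) with the regular (resp.\ holomorphic) functions modulo $\C$, sending a divergence-free field $V$ to the Hamiltonian $f$ determined by $\iota_V\omega=df$. Under this identification the Lie bracket becomes the Poisson bracket $\{f,g\}$ attached to $\omega$, and, using Theorem \ref{shearlnd} together with the normal form of the LNDs on $D_p$ for $\deg p\ge 2$ (all of them being $a(x)\partial_1$ or $b(y)\partial_2$), the shear fields correspond exactly to the Hamiltonians $f(x)$ and $g(y)$ that depend on a single variable.

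For a $2$-cycle $C\in H_2(D_p;\Z)\cong\Z^{\,n-1}$ I would introduce the period functional $\ell_C(V)=\int_C f\,\omega$, where $V=H_f$. Two facts drive the argument. First, $\ell_C$ kills every Poisson bracket, because $\{f,g\}\,\omega=df\wedge dg=d(f\,dg)$ and $C$ is a cycle, so Stokes gives $\int_C\{f,g\}\,\omega=0$. Second, $\ell_C$ kills every shear generator: once the constant terms are discarded one has $f(x)\,\omega=d\bigl(F(x)\,dz\bigr)$ and $g(y)\,\omega=-d\bigl(G(y)\,dz\bigr)$ with $F'(x)=f(x)/x$ and $G'(y)=g(y)/y$, so these $2$-forms are exact. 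Hence $\ell_C$ annihilates the entire Lie algebra generated by the shear fields. In order that $\ell_C$ also kill the constant Hamiltonians, so that it descends modulo $\C$, one needs $\int_C\omega=0$; since $[\omega]\neq0$ in $H^2(D_p;\C)=\C^{\,n-1}$, the classes with vanishing $\omega$-period form a subspace of dimension $n-2$, which is nonzero precisely when $n\ge 3$. For such a $C\neq0$ the perfect period pairing produces a function on which $\ell_C$ does not vanish, so $\ell_C$ is a nontrivial functional vanishing on the generated Lie algebra.

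To pass from this algebraic properness to non-density I would check that $\ell_C$ extends to a \emph{continuous} functional on $\VFHO(D_p)$. Continuity in the compact-open topology is immediate: if $V_k\to V$ uniformly on compacts then $df_k=\iota_{V_k}\omega\to\iota_V\omega=df$ uniformly on compacts, whence $f_k\to f$ uniformly on the compact cycle $C$ and $\ell_C(V_k)\to\ell_C(V)$; the additive ambiguity in $f$ is harmless because $\int_C\omega=0$. The two vanishing computations above carry over verbatim to holomorphic shear fields, the only change being that now $F(x)=\int_0^x f(t)/t\,dt$ is the entire primitive of an entire function $f$ with $f(0)=0$. Thus $\ell_C$ vanishes on all holomorphic shear fields and, being a Lie-algebra homomorphism into an abelian target, on all of their iterated brackets, hence on the whole Lie algebra they generate. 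By continuity it vanishes on the closure of this Lie algebra in $\VFHO(D_p)$, while $\ell_C\not\equiv 0$; the closure is therefore a proper subspace and density fails, which is precisely the content of Corollary \ref{dense}.

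The main obstacle is the second vanishing statement: one must know that \emph{every} shear field, not only the two model LNDs, has a one-variable Hamiltonian whose associated $2$-form is globally exact, so that $\ell_C$ really annihilates all generators; this is exactly where the full classification of LNDs on $D_p$ and the explicit primitives $F,G$ enter. The remaining delicate point is the numerology $n\ge 3$: it enters only through the existence of a nonzero cycle with $\int_C\omega=0$, and the failure of this for $n\le 2$ is consistent with the shear fields then generating a dense (indeed the full algebraic) Lie algebra.
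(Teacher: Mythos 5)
Your argument is correct, but it takes a genuinely different route from the paper's. The paper first proves a precise algebraic structure result by induction on the bracket expressions (Lemmas \ref{lempolynom} and \ref{f}): the Hamiltonian of any Lie combination of algebraic shear fields has absolute Laurent coefficient $a_0(z)$ equal, up to a constant, to $(hp)'$ for a polynomial $h$; it then observes that this condition is closed in the compact-open topology because it can be expressed as the agreement of finitely many explicit integral expressions built from $\int_{|x|=1}g\,\frac{dx}{x}$ and integration in $z$ between the roots of $p$. Your functional $\ell_C$ is, after unwinding, exactly a linear combination of those same integrals (the inner circle integral extracts $a_0$, the outer integral runs along the paths beneath the spheres, and the condition $\int_C\omega=0$ encodes the constant ambiguity, since the $\omega$-periods of the standard spheres are $2\pi i(z_j-z_1)$). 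What is genuinely different, and cleaner, is how you dispose of the brackets: Stokes' theorem, via $\{f,g\}\,\omega=df\wedge dg=d(f\,dg)$ and $\partial C=\emptyset$, kills \emph{all} iterated brackets at once, so only the generators need to be checked, and the entire inductive case analysis of Lemmas \ref{lempolynom} and \ref{f} evaporates. The trade-off: the paper's computation is not wasted effort, since the exact form of the Hamiltonians is what later yields the full description in Theorem \ref{final}, whereas your argument proves only non-density. Your route has the additional merit of making the numerology transparent: $n\ge 3$ is precisely the existence of a nonzero $\omega$-null $2$-cycle.

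Three small repairs. First, take $C$ in $H_2(D_p;\C)$ rather than $H_2(D_p;\Z)$: since the $\omega$-periods of the standard spheres are $2\pi i(z_j-z_1)$, the $\omega$-null \emph{integral} classes can be trivial when the differences of roots are $\Q$-linearly independent; with complex coefficients the null classes do form an $(n-2)$-dimensional subspace, and $\ell_C$ is still a finite linear combination of honest period functionals, so nothing else changes. Second, the step ``the perfect period pairing produces a function'' tacitly uses that every class in $H^2(D_p;\C)$ is of the form $[f\omega]$ with $f$ regular; this is true (algebraic de Rham, or concretely: the forms $z^k\omega$, $k=0,\dots,n-2$, have a Vandermonde-type period matrix over the standard spheres, hence span), and it means concretely that $\ell_C$ is nonzero on one of the hyperbolic fields $HF_{z^k}$ --- the same fields the paper exhibits as obstructions. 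Third, your appeal to Theorem \ref{shearlnd} and a ``normal form of LNDs'' is unnecessary and slightly misleading: conjugated shear fields do \emph{not} have one-variable Hamiltonians, but the corollary only concerns shear fields themselves, whose Hamiltonians are functions of $x$ alone or $y$ alone by Lemma \ref{satzi} (and by the same one-line computation in the holomorphic case), which is all your exactness argument needs.
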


If the
degree is $2$ or $1$ we  prove that  the algebraic shear vector fields {\bf do generate} the Lie algebra $\mathrm{VF}_{alg}^\omega(D_p)$ of algebraic volume
preserving vector fields. If the degree is $1$, the Danielewski surface is biholomorphic to $\C^2$ and we recover  exactly the \textsc{Anders\'en-Lempert}
result. Our main result is

\begin{theorem*}[see Theorem \ref{final}]
 A volume preserving vector field $\Theta$ on the Danielewski surface $D_p$ is a Lie combination of LNDs if and only if its corresponding function with
$i_\Theta\omega = df$ is of the form (modulo constant)
\[ f(x,y,z) = \sum_{\substack{i=1\\j=0}} ^{k}a_{ij} x^i z^j + \sum_{\substack{i=1\\j=0}}^{l}b_{ij} y^i z^j + (pq)'(z)
\] 
for a polynomial $q\in\C[z]$.
\end{theorem*}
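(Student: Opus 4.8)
The plan is to convert the problem into commutative algebra through the Hamiltonian correspondence and then identify an explicit Poisson subalgebra. On $D_p$ the volume form can be written as $\omega = \frac{dx\wedge dz}{x} = \frac{dz\wedge dy}{y} = \frac{dx\wedge dy}{p'(z)}$, and a vector field $\Theta$ is volume preserving precisely when $i_\Theta\omega$ is closed; for the fields in question it is exact, $i_\Theta\omega = df$. The assignment $\Theta\mapsto f$ (well defined modulo constants) is a linear isomorphism onto functions modulo constants which intertwines the Lie bracket with the Poisson bracket $\{f,g\}$ defined by $\{f,g\}\,\omega = df\wedge dg$. Hence $\Theta$ is a Lie combination of LNDs if and only if $f$ lies in the Poisson subalgebra $\mathfrak g$ generated by the Hamiltonians of the LNDs, and the theorem becomes the computation of $\mathfrak g$ inside $A=\C[x,y,z]/(xy-p(z))$.

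First I would record the structure constants $\{x,y\}=p'(z)$, $\{x,z\}=x$, $\{z,y\}=y$ and introduce the grading $\deg x=1$, $\deg y=-1$, $\deg z=0$, under which the bracket is homogeneous and $A=\bigoplus_d A_{(d)}$ with $A_{(d)}=x^d\C[z]$ for $d>0$, $A_{(0)}=\C[z]$, $A_{(d)}=y^{-d}\C[z]$ for $d<0$. By Theorem \ref{shearlnd} the LNDs are the shear fields $a(x)\delta_1$ and $b(y)\delta_2$, whose Hamiltonians lie in $\C[x]$ and $\C[y]$ (indeed $i_{a(x)\delta_1}\omega=-a(x)\,dx$); thus the generators of $\mathfrak g$ are exactly $\{x^i\}_{i\ge1}$ and $\{y^j\}_{j\ge1}$. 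The claimed answer is precisely the graded subspace $V:=\bigoplus_{d\ne0}A_{(d)}\ \oplus\ \frac{d}{dz}\!\big(p\,\C[z]\big)$, i.e. arbitrary $x$- and $y$-parts together with a degree-zero part of the special form $(pq)'$.

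The inclusion $\mathfrak g\subseteq V$ is the easy half: $V$ contains the generators and, being graded, is a Lie subalgebra as soon as $\{A_{(i)},A_{(-i)}\}\subseteq\frac{d}{dz}(p\,\C[z])$, which follows from the Leibniz-rule identity $\{x^i g,y^i h\}=i\,(p^i g h)'=\big(p\cdot i p^{i-1}gh\big)'$. For the reverse inclusion I would first dispose of the off-diagonal degrees by a clean induction: from $\{x,\,x^{d-1}h\}=x^d h'$ and the surjectivity of $\frac{d}{dz}$ on $\C[z]$ one gets $\mathfrak g\cap A_{(d-1)}=A_{(d-1)}\Rightarrow \mathfrak g\cap A_{(d)}=A_{(d)}$, so the whole of the $x$-tower (and, symmetrically, the $y$-tower) reduces to the single statement $S_1:=\{g\in\C[z]:xg\in\mathfrak g\}=\C[z]$. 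Granting this, the degree-zero part is immediate, since $\{xg,yh\}=(pgh)'$ exhausts $\frac{d}{dz}(p\,\C[z])$ as $gh$ runs through $\C[z]$.

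The heart of the matter is therefore proving $S_1=\C[z]$, and this is where I expect the genuine difficulty and where smoothness must enter. I can write down explicit members of $S_1$: the constant $1$ (from $x$), every $p^{k}p'$ (from $\{x^{k+2},y^{k+1}\}$), and I can verify that $S_1$ is stable under multiplication by $p''$ and, more generally, by $u'$ for every $u\in S_0$ (via $\{u,xg\}=-x u'g$), with $S_0\supseteq\{(p^i)'\}$ already available. The obstacle is that all of these operations raise the $z$-degree, so the low-degree monomials $z,\dots,z^{n-2}$ are not produced directly; one must instead manufacture them by cancellation, combining high-degree generated elements whose leading terms cancel (for instance, subtracting suitable multiples of $p''$-powers from $p\,p'$ to recover $z$). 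This is exactly the step that forces $\deg p\ge3$ and, crucially, uses the square-freeness of $p$, i.e. $\gcd(p,p')=1$ and hence the smoothness of $D_p$: a Bezout-type argument then shows that $\{1\}\cup\{p^kp'\}$ together with the available multiplication operators span all of $\C[z]$, giving $S_1=\C[z]$. Assembling the pieces yields the theorem, and the codimension $n-1$ of $\frac{d}{dz}(p\,\C[z])$ in $\C[z]$ is precisely what produces the non-density statement of Corollary \ref{dense} for $\deg p\ge3$.
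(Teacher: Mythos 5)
Your setup is correct and, modulo repackaging, is the same as the paper's: the Hamiltonian dictionary $\Theta\leftrightarrow f$ intertwining Lie bracket with Poisson bracket is Lemmas \ref{satzi} and \ref{lemi}, your graded inclusion $\mathfrak{g}\subseteq V$ is Lemmas \ref{lempolynom} and \ref{f} in cleaner form, and your reduction of the hard inclusion to $S_1=\C[z]$ matches the paper's reduction via Theorem \ref{satzchar} and Remark \ref{bemoab}. But your proof stops exactly where the real work begins: the claim $S_1=\C[z]$ is asserted, not proved. What you actually have in hand is $1\in S_1$, $p^kp'\in S_1$, and stability of $S_1$ under multiplication by $u'$ for $u\in S_0$, where $S_0$ is itself only populated through the feedback $\{S_1,S_{-1}\}\subseteq\frac{d}{dz}(p\,\C[z])$ --- a bootstrapping system in which every available operation raises degree, as you yourself observe. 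Showing that this bootstrap nevertheless fills up $\C[z]$ is the entire difficulty of the theorem, and it is not a ``Bezout-type argument.'' In the paper it occupies the second half of Section 3: the bootstrap is formalized as the subspace $W$ with properties (i)--(iv), and one proves $A_W=\C[z]$ by showing the algebra $A_W$ has no common zero (Lemma \ref{lemcomroot}), no common critical point (Lemma \ref{lemimm}), and separates points of $\C$ (Lemma \ref{leminj}, via a $6\times 6$ determinant identity and an induction producing $p^{(l)}(a)=p^{(l)}(b)$ for all $l$), followed by an embedding argument (Proposition \ref{propAW}); only after that do Lemma \ref{lempn} (a count showing the polynomials $(p^ip')'$ and their derivatives contain one polynomial of every degree, hence a basis) and Proposition \ref{propmixedterms} --- where $\gcd(p,p')=1$, your ``Bezout,'' finally enters to descend from $x^n q$ to $x^i q$ for $i<n$ --- yield $S_1=\C[z]$. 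Saying that low-degree monomials can be ``manufactured by cancellation'' of leading terms is a statement of hope; nothing in your list of operations visibly produces $z,\dots,z^{n-2}$, and that is precisely what must be proved.

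There is a second, smaller gap: you misquote Theorem \ref{shearlnd}. For $\deg p\geq 3$ the LNDs are the shear fields \emph{together with their conjugates by compositions of shear automorphisms}, not only the fields $a(x)\delta_1$ and $b(y)\delta_2$. So even granting $\mathfrak{g}=V$ for the Lie algebra generated by shear fields, your ``only if'' direction is incomplete: a Lie combination of LNDs could a priori involve conjugated shear fields, whose Hamiltonians are not in $\C[x]\cup\C[y]$. The paper closes this with Lemmas \ref{lemconjugateshear} and \ref{lemclosed}: if $\Theta$ is an LND with flow $\phi_t$, then $(\phi_t)^*(\Psi)=\Psi+t[\Theta,\Psi]+\tfrac{t^2}{2}[\Theta,[\Theta,\Psi]]+\dots$ is a \emph{finite} sum, hence lies in the Lie algebra generated by $\Theta$ and $\Psi$, so conjugation by shear automorphisms does not enlarge $\mathfrak{g}$. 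This argument is short, but it is needed and is absent from your proposal.
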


 In the "positive" cases of
degree 1 and 2 the proof of the main theorem of Anders\'en-Lempert theory implies the density of the  group generated by shears  in the (path connected
component of the) group $\mathrm{Aut}_{hol}^\omega (D_p)$ of volume preserving holomorphic automorphisms, whereas in the "negative" cases degree $\ge 3$ we
cannot conclude that the the group generated by shears is not dense in the group $\mathrm{Aut}_{hol}^\omega (D_p)$ of volume preserving holomorphic
automorphisms. Here we are lacking a quantity attached to an automorphism which is zero for all shear automorphisms but nonzero for the hyperbolic automorphisms
 $H_f$ whose function $f$ is not the second derivative of a function divisible by the defining polynomial $p$.

 The results of our paper are also interesting in connection with the following open problem formulated in \cite{flexible}:\
 
 \textit{ Does a flexible affine algebraic manifold equipped with an algebraic volume form have the algebraic volume density property? }\

Remember that an affine algebraic manifold is called flexible if the LNDs on it generate
the tangent space at every point. By Proposition \ref{flex} this is  true  for $D_p$.

Even thought $D_p$ has the volume density property the Lie algebra generated by LNDs in not the Lie algebra $\mathrm{VF}_{alg}^\omega(D_p)$. The additional
hyperbolic fields (algebraic $\C^*$-actions) are needed to get all of $\mathrm{VF}_{alg}^\omega(D_p)$. Thus we do not have a
counterexample to the above problem, but near to a counterexample:  We have an example where the LNDs span the
tangent space at each point and at the same time do not generate the Lie algebra of volume preserving algebraic vector fields.

The paper is organized as follows. In section 2 we recall some known facts for Danielewski surfaces and give  certain proofs in order to make the paper self
contained. We believe that some of these proofs are new.

 In section 3 we explain how  volume preserving vector fields can be related to functions on the Danielewski surface and how this relation works with respect to
Lie bracket. This is a new method, which is afterwards used to prove our main result, the characterization of the Lie algebra generated by LNDs on Danielewski
surfaces. 

On the way  we use our method based on the duality between volume preserving vector fields and functions to prove (version of)  the algebraic volume density
property for $D= Sl_2 (\C) / N$, where
$N$ is the normalizer of the maximal torus $N\cong \C^*\rtimes \Z_2$. The importance of this lies in the fact that the methods (compatible pairs of globally
integrable fields)  for proving AVDP recently  developed by \textsc{Kaliman} and the first author do not work for this particular homogeneous space as explained
in \cite{KaKuvolume}. Also we prove AVDP for $(D \times \C^*) / \Z_2$ where $\Z_2$ acts diagonally. This is a good exercise, since the proof given in 
\cite{KaKuvolume} is using very abstract methods. Comparing our calculations to that proof 
let one feel  the strength of the  method of semi-compatible vector fields  developed in \cite{KaKuvolume}.

\section{Danielewski Surface}
 Let $p \in \C[z]$ be a polynomial with simple zeros. The variety given by $D_p = \lbrace(x,y,z) \in \C^3 \mid xy=p(z)\rbrace$ is called \textit{Danielewski
surface}. Since $p$ has only simple zeros $D_p$ is the preimage of a regular value and hence a complex manifold. Often it is useful to work in one of the two
charts $\C^* \times \C \rightarrow D_p: (x,z)\mapsto (x,\frac{p(z)}{x},z)$ and $(y,z) \mapsto (\frac{p(z)}{y},y,z)$, which cover all points of $D_p$ with
$x\neq0$ respective $y\neq 0$. An important fact is that every regular function $f\in\C[D_p]$ can be written uniquely as
\[ f(x,y,z) = \sum_{\substack{i=1\\j=0}} ^{k}a_{ij} x^i z^j + \sum_{\substack{i=1\\j=0}}^{l}b_{ij} y^i z^j + \sum_{i=0}^{m}c_{i} z^i \eqno{(1)}
\]
by substituting $xy = p(z)$ successively. As proven in \cite{kkvoldensity} there is an algebraic volume form $\omega$ on
$D_p$, which is unique up to a constant. In the
local charts from before it is given by $\omega=\frac{dx}{x}\wedge dz$ and $\omega=-\frac{dy}{y}\wedge{dz}$, respectively. Here comes the first well-known fact.

\begin{proposition} \label{sconn}
 The Danielewski surfaces $D_p$ are simply connected and we have $H^2(D_p,\C) \cong \C^{\mathrm{deg}(p)-1}$.
\end{proposition}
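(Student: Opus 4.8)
The plan is to cover $D_p$ by its two standard affine charts and feed the resulting open cover into both the Seifert--van Kampen theorem (for simple connectivity) and the Mayer--Vietoris sequence (for $H^2$), so that everything reduces to elementary computations with $\C^*$ and punctured planes. Write $n=\deg(p)$ and let $z_1,\dots,z_n$ be the pairwise distinct roots of $p$. Put $U=\{x\neq 0\}$ and $V=\{y\neq 0\}$. The charts recalled above identify $U\cong \C^*\times\C$ via $(x,z)$ and $V\cong\C^*\times\C$ via $(y,z)$; since on $U$ the condition $y\neq 0$ reads $p(z)\neq 0$, the overlap is $U\cap V\cong \C^*\times(\C\setminus\{z_1,\dots,z_n\})$. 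As $U,V$ are open, connected and cover $D_p$, and $U\cap V$ is connected, both machines apply.

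The key step is to record the maps induced by the inclusions $U\cap V\hookrightarrow U$ and $U\cap V\hookrightarrow V$ on fundamental groups (and on $H_1$). Up to homotopy $U\cap V\simeq S^1\times\bigvee_{i=1}^n S^1$; write $s$ for the class of the $x$-circle and $\ell_i$ for a small loop around the puncture $z_i$. Into $U\simeq\C^*_x$ one has $s\mapsto\sigma$ (a generator) and $\ell_i\mapsto 1$, because the $z$-plane in $U$ is contractible. Into $V\simeq\C^*_y$ one uses $y=p(z)/x$: the $x$-loop carries $y$ once around the origin with the opposite orientation, so $s\mapsto\tau^{-1}$, while a loop around the \emph{simple} root $z_i$ makes $p(z)$, hence $y$, wind exactly once, so $\ell_i\mapsto\tau$. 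I expect this winding-number bookkeeping — in particular the fact that simplicity of the roots forces each $\ell_i\mapsto\tau$ with multiplicity one — to be the only real subtlety.

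With these data, van Kampen presents $\pi_1(D_p)$ as the pushout $\langle\,\sigma,\tau \mid \sigma=\tau^{-1},\ \tau=1\,\rangle$, where $\tau=1$ comes from each $\ell_i$ and $\sigma=\tau^{-1}$ from $s$; this is the trivial group, so $D_p$ is simply connected. For the homology I run Mayer--Vietoris with $\C$ coefficients: $H_*(U)=H_*(V)=H_*(S^1)$, and by Künneth $H_*(U\cap V)$ equals $\C,\C^{n+1},\C^n$ in degrees $0,1,2$. The same computation shows the map $H_1(U\cap V)\to H_1(U)\oplus H_1(V)\cong\C\oplus\C$ is $s\mapsto(1,-1)$ and $\ell_i\mapsto(0,1)$; it is surjective with kernel of dimension $n-1$, spanned by the classes $\ell_i-\ell_j$. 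Since $H_2(U)=H_2(V)=0$, the sequence yields $H_2(D_p;\C)\cong\ker\bigl(H_1(U\cap V)\to H_1(U)\oplus H_1(V)\bigr)\cong\C^{\,n-1}$, and the universal coefficient theorem gives $H^2(D_p;\C)\cong\C^{\,n-1}$, as claimed. As a sanity check, $n=1$ recovers $\C^2$, which is simply connected with vanishing $H^2$.
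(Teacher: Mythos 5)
Your decomposition has a genuine gap right at the start: $U=\{x\neq 0\}$ and $V=\{y\neq 0\}$ do \emph{not} cover $D_p$. A point of $D_p$ with $x=y=0$ must satisfy $p(z)=0$, so $D_p$ contains the $n=\deg(p)$ points $(0,0,z_i)$, and none of them lies in $U\cup V$; this is exactly why the paper introduces the maps $\C^*\times\C\to D_p$ only as charts covering ``all points of $D_p$ with $x\neq 0$ respective $y\neq 0$''. Since both Seifert--van Kampen and Mayer--Vietoris require an honest open cover, what your computation actually determines is $\pi_1$ and $H_2$ of $U\cup V=D_p\setminus\{(0,0,z_1),\dots,(0,0,z_n)\}$, not of $D_p$. (The rest of the bookkeeping is correct: the winding numbers $s\mapsto\sigma$, $s\mapsto\tau^{-1}$, $\ell_i\mapsto 1$, $\ell_i\mapsto\tau$ are right, and the rank count in the Mayer--Vietoris sequence is right.)

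The gap can be bridged, because removing finitely many points from a smooth $4$-manifold changes neither $\pi_1$ nor $H_2$: surround each missing point $(0,0,z_i)$ by a small ball $B_i$, and apply van Kampen and Mayer--Vietoris a second time to the cover of $D_p$ by $U\cup V$ and $\bigcup_i B_i$, whose overlap is homotopy equivalent to a disjoint union of copies of $S^3$; simple connectivity of $S^3$ gives $\pi_1(D_p)\cong\pi_1(U\cup V)$, and $H_2(S^3)=H_1(S^3)=0$ gives $H_2(D_p;\C)\cong H_2(U\cup V;\C)$. With that extra step your argument becomes a valid proof, and one genuinely different from the paper's, which instead writes down an explicit strong deformation retraction of $D_p$ onto a bouquet of $n-1$ two-spheres (retracting first the $z$-variable onto a path through the roots, then collapsing the fibers); the paper's route yields the homotopy type directly, while yours trades the explicit retraction for standard cut-and-paste machinery. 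But as written, the assertion that $U$ and $V$ cover $D_p$ is false, so the proof is incomplete without the codimension argument above.
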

\begin{proof}
 It is possible to construct a strong deformation retraction onto a bouquet of $(\mathrm{deg}(p)-1)$ 2-spheres connecting the zeros of $p$. First choose a
smooth curve $\gamma:
[0,1] \rightarrow \C_z \subset D_p$ in the $z$-plane connecting the zeros of $p$ and then retract $D_p$ onto the spheres around the segments of the path between
the zeros. Let $\rho_t: [0,1]\times \C_z \rightarrow \C_z$ be a strong deformation retraction onto $\gamma$. We use this retraction to define the strong
deformation retraction 
\[{R}_t: D_p \rightarrow \lbrace (x,y,z)\in D_p: \ z \in \gamma \rbrace: (x,y,z) \mapsto \left(\frac{p(\rho_t(z))}{p(z)}x,y,\rho_t(z)\right).
\]
Additionally we define a strong deformation retraction $H_t$ from $\lbrace (x,y,z)\in D_p:  \ z\in\gamma \rbrace$ onto a bouquet of 2-spheres.
\[
 H_t(x,y,z):=\left(\frac{p(z)}{t|p(z)|^{1/2} \frac{y}{|y|} + (1-t)y}, t|p(z)|^{1/2} \frac{y}{|y|} + (1-t)y, z\right)
\]
for $p(z)\neq 0$ and $|y|\geq |p(z)|^{1/2}$ and
\[
H_t(x,y,z):=\left(t|p(z)|^{1/2} \frac{x}{|x|} + (1-t)x,\frac{p(z)}{t|p(z)|^{1/2} \frac{x}{|x|} + (1-t)x}, z\right)
\]
for $p(z)\neq 0$ and $|x|\geq |p(z)|^{1/2}$. When $p(z) = 0$ then either $x = 0$ or $y=0$ (or both). In this case choose
\[
 H_t(x,y,z):=\left(0,(1-t)y,z\right) \quad \mathrm{or} \quad H_t(x,y,z):=\left((1-t)x,0,z\right).
\]
The composition of $R_t$ and $H_t$ is the desired strong deformation retraction from $D_p$ to the bouquet of 2-spheres,
therefore $D_p$ is simply connected and has $H^2(D_p,\C) \cong
\C^{\mathrm{deg}(p)-1}$.
\end{proof}

\subsection{Vector fields on the Danielewski surface}
Let us begin with two equivalent definitions of locally nilpotent derivations:
\begin{definition}
 A globally integrable vector field $\Theta$ is a \textit{locally nilpotent derivation (LND)} iff its flow $\psi^t$ is an algebraic $\C^+$-action, i.e. $t
\mapsto \psi^t$ is an algebraic map. Equivalently a vector field $\Theta$ is a LND whenever for all $f\in \C[D_p]$ there is an integer $N$ such that
$\Theta^N(f)=\Theta \circ \ldots \circ \Theta(f) = 0$. For the equivalence of these definitions see \cite{LND} p.31. The subgroup of
$\mathrm{Aut}_{\mathrm{alg}}(D_p)$
generated by flows from LNDs is called the \textit{special automorphism group} $\mathrm{SAut}_{\mathrm{alg}}(D_p)$.
\end{definition}
\begin{definition} \label{shear}
 The algebraic vector fields of the Danielewski surface $D_p$ \newline
\[
 SF_i^x:=p'(z)x^i\frac{\partial}{\partial y} + x^{i+1}\frac{\partial}{\partial z}, \]
\[
SF_i^y:=p'(z)y^i\frac{\partial}{\partial x} + y^{i+1}\frac{\partial}{\partial z}
\]
are called \textit{shear fields} for all $i\in\N_0$ and the vector fields 
\[ HF_f:=f(z)\Big(x\frac{\partial}{\partial x} -y\frac{\partial}{\partial y}\Big)\]
are called \textit{hyperbolic fields} for all $f\in\C[z]$.

 The vector fields above are globally integrable and volume preserving, their flows are:
\[\phi_1^t: (x,y,z)\mapsto (x,\frac{p(z+tx^{i+1})}{x},z+tx^{i+1}), \]
\[\phi_2^t: (x,y,z)\mapsto (\frac{p(z+ty^{i+1})}{y},y,z+ty^{i+1}), \]
\[\phi_3^t: (x,y,z)\mapsto (e^{tf(z)}x,e^{-tf(z)}y,z). \]
Note that $\frac{p(z+tx^{i+1})}{x} = \frac{p(z) +tx^{i+1}(\ldots)}{x} = y + tx^i(\ldots)$. This shows that the shear fields are locally nilpotent derivations
and the hyperbolic fields are not. 
For $t=1$ this automorphisms are called \textit{x-(resp y-)shear automorphisms} (short: \textit{shears}) respectively \textit{hyperbolic automorphisms}. 
\end{definition}

Recall the following definition from  \cite{flexible}.
\begin{definition}
 $M$ is said to be \textit{flexible} iff the LND-vector fields span the tangent space in all points of $M$. For properties of flexible manifolds see
\cite{flexible}.
\end{definition}

\begin{proposition}[\cite{verytransitive}]\label{flex}
 The Danielewski surface is flexible.\end{proposition}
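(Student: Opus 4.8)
The plan is to show that at every point $P=(x_0,y_0,z_0)\in D_p$ one can find two locally nilpotent derivations whose values span the two-dimensional space $T_PD_p$. I will use that the shear fields $SF_i^x,SF_i^y$ of Definition \ref{shear} are LNDs, and that for any algebraic automorphism $\phi$ and any LND $\Theta$ the push-forward $\phi_*\Theta$ is again an LND; so both the shears and their conjugates by shear flows are at my disposal. First I would pass to the chart $(x,z)\mapsto(x,p(z)/x,z)$, valid where $x_0\neq0$; substituting $y=p(z)/x$ one computes that in these coordinates
\[
SF_i^x = x^{i+1}\,\partial_z,\qquad SF_i^y = p'(z)\Big(\tfrac{p(z)}{x}\Big)^{i}\partial_x + \Big(\tfrac{p(z)}{x}\Big)^{i+1}\partial_z .
\]

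Next I would dispose of the points where $p'(z_0)\neq0$. If $x_0\neq0$ the two fields $SF_0^x|_P=x_0\,\partial_z$ and $SF_0^y|_P=p'(z_0)\,\partial_x+\tfrac{p(z_0)}{x_0}\,\partial_z$ have determinant $-x_0p'(z_0)\neq0$ in the basis $\partial_x,\partial_z$, hence span; the symmetric chart settles $y_0\neq0$. If $x_0=y_0=0$ then $z_0$ is a zero of $p$, and there $SF_0^x|_P=p'(z_0)\,\partial_y$ and $SF_0^y|_P=p'(z_0)\,\partial_x$ span the tangent space $\{v_z=0\}$ (recall $p'(z_0)\neq0$ because the zeros of $p$ are simple).

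The hard part is the locus $p'(z_0)=0$. Here simplicity of the zeros forces $p(z_0)\neq0$, so $x_0,y_0\neq0$; and the formulas above show that every shear field has $\partial_x$-component divisible by $p'(z)$, so \emph{all} of them degenerate at $P$ to multiples of $\partial_z$, and the shears alone fail to span. To repair this I would exploit the flow $\phi_1^s$ of $SF_0^x$, which in the chart is simply $(x,z)\mapsto(x,z+sx)$ and whose differential $\left(\begin{smallmatrix}1&0\\ s&1\end{smallmatrix}\right)$ preserves the $\partial_x$-component. Choosing $s$ with $p'(z_0-sx_0)\neq0$ — possible since $p'$ vanishes only finitely often while $z_0-sx_0$ sweeps out $\C$ — the LND $\Theta:=(\phi_1^s)_*SF_0^y$ has value at $P$ equal to $d\phi_1^s$ applied to $SF_0^y$ at $Q=\phi_1^{-s}(P)$, so its $\partial_x$-component equals $p'(z_0-sx_0)\neq0$. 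Then $\Theta|_P$ is transverse to $SF_0^x|_P=x_0\,\partial_z$, and the pair spans $T_PD_p$, completing the argument.

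I expect the only real obstacle to be exactly this critical locus $\{p'=0\}\setminus p^{-1}(0)$, where the two standard $\C^+$-actions become tangent to one another; the mechanism that saves the day is that conjugating one shear by the flow of the other moves $z_0$ to a noncritical value of $p$ and so recovers the missing transverse direction. Conceptually this is the statement that the $\mathrm{SAut}$-invariant distribution $P\mapsto\mathrm{Span}\{\Theta|_P:\Theta\text{ an LND}\}$ has constant rank under the transitive action of $\mathrm{SAut}(D_p)$ and is of full rank at a generic point, but the explicit computation above avoids having to invoke transitivity.
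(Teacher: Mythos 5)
Your proof is correct and takes essentially the same approach as the paper: the two shears $SF_0^x$, $SF_0^y$ settle every point with $p'(z_0)\neq 0$, and on the critical locus $\{p'=0\}$ one conjugates $SF_0^y$ by the flow of $SF_0^x$ (a shear automorphism) to regain a transverse $\partial_x$-direction. The only (inessential) difference is that the paper fixes finitely many conjugating times $k=1,\ldots,n$ in advance so that the same finite family of LNDs works at all points, whereas you choose the time $s$ point by point.
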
 
 
 \begin{proof}
 
 The two following LND-vector fields span the tangent space in every point of $D_p$ where $p'(z) \neq 0$.
\[
 p'(z)\frac{\partial}{\partial y} + x\frac{\partial}{\partial z}, \quad p'(z) \frac{\partial}{\partial x} + y\frac{\partial}{\partial z}.
\]
For the points with $p'(z) = 0$ we add the following vector fields. Let $\alpha_k(x,y,z)=(x,\frac{p(z-kx)}{x},z-kx)$ then the with $\alpha_k$ conjugated
shear fields do the job (see the remark below)
\begin{eqnarray*}
\alpha_k^*(SF_0^y) &=& p'(z+kx) \frac{\partial}{\partial x} \\ &&+ \frac{p(z+kx)p'(z) -p'(z+kx)p(z) - kxp'(z+kx)p'(z)}{x^2}\frac{\partial}{\partial y} \\ && +
\left(-kp'(z+kx) + \frac{p(z+kx)}{x}\right)\frac{\partial}{\partial z}.
\end{eqnarray*}
Assume $p'$ has $n$ zeros, then the fields $\alpha_k^*(SF_0^y)$ for $k=1, \ldots, n$ together with the two shear fields from above will span the tangent space
at any point.

\end{proof}

\begin{remark} \label{conj}
 Given a vector field $\Theta$ and a holomorphic automorphism $\phi: M \rightarrow M$ then the \textit{vector field conjugated by $\phi$} is given by $(\phi^*
\Theta)_p := ((D\phi^{-1}) \Theta)_{\phi(p)}$. The vector field $\phi^*\Theta$  is globally integrable whenever $\Theta$ is it. Its flow is $\phi \psi^t
\phi^{-1}$ where $\psi^t$ is the flow of $\Theta$. In particular a LND conjugated by an algebraic automorphism is an LND again. The interior product for a
k-form $\omega$ is $i_{(\phi^*\Theta)}\omega = \phi^*(i_\Theta(\phi^{-1*}\omega))$, in particular when $\omega$ is invariant under $\phi$ then
$i_{(\phi^*\Theta)}\omega = \phi^*(i_\Theta\omega)$.
\end{remark}

\subsection{The (Special) Automorphism Group}
The goal of this subsection is to see that the LNDs are exactly the shear fields and shear fields conjugated with shear automorphisms. This  result is not new
\cite{makar3}, in order to make the paper self contained we give a proof (which to our knowledge is new).

 We begin with the description of the algebraic automorphism group $\mathrm{Aut}_{\mathrm{alg}}(D_p)$. The following theorem is due to  \textsc{Makar-Limanov},
he stated it in
the end of the paper \cite{makar1} without proving it. 

\begin{theorem}[\cite{makar1}]\label{makar} 
 Let $\mathrm{deg}(p)\geq 3$ and let $p$ be generic in the following sense: No affine automorphism $\alpha$ of $\C$ permutes the roots of $p$. Then the group of
all algebraic automorphisms is $\mathrm{Aut}_{\mathrm{alg}}(D_p) = G_0 \rtimes (H \rtimes J)$ where $G_0 = G_x * G_y$ is the free product of the subgroups $G_x$
(resp. $G_y$)
generated by the x- (resp. y-) shear automorphisms, $H$ is the subgroup of algebraic hyperbolic automorphisms and $J$ is the subgroup consisting of the identity
and $I(x,y,z)=(y,x,z)$ is the involution.\ 
 
 In the non generic case denote by $\Gamma$ the group of affine automorphisms of $\C$ permuting the roots of $p$, i.e. $p \circ \gamma = a_0 \ p(z)$, where
$a_0$ is a root of unity (depending on $\gamma$).
 $\Gamma$ induces a group of automorphisms of $D_p$, which we denote by $\tilde \Gamma$.
 In this case we denote by $J$ the group generated by $\tilde \Gamma$ and $I$,  then we have again $\mathrm{Aut}_{\mathrm{alg}}(D_p) = G_0 \rtimes (H \rtimes J)
$ with $G_0$
and $H$ as above.
\end{theorem}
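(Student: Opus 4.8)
The plan is to reduce the statement to a complete classification of the locally nilpotent derivations on $\C[D_p]$ and then to extract the group structure by a normal-form (reduction) argument of Jung--van der Kulk type. First I would analyze $\ker\partial$ for an arbitrary nonzero LND $\partial$ on $\C[D_p]=\C[x,y,z]/(xy-p(z))$. Since the only units of this ring are the constants, $\ker\partial$ is a polynomial ring $\C[f]$ in a single ``fiber variable'' $f$, and $\partial$ induces an $\mathbb{A}^1$-fibration $D_p\to\Spec\C[f]\cong\C$. The heart of the matter is to show that, when $\deg(p)\geq 2$, the only such fibrations up to a special automorphism are the two coordinate projections, i.e.\ $\ker\partial$ is conjugate, by an element of $\mathrm{SAut}_{\mathrm{alg}}(D_p)$, to either $\C[x]$ or $\C[y]$; equivalently every LND is a shear field up to such conjugation, which is the description recorded later as Theorem \ref{shearlnd}.

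For this classification I would put a weighted filtration on $\C[D_p]$, assigning weights to $x,y,z$ with $w(x)+w(y)=\deg(p)\cdot w(z)$, and pass to the associated graded ring, which degenerates $xy-p(z)$ to the quasi-homogeneous relation $xy=cz^{\deg(p)}$. A nonzero LND induces a homogeneous LND on the associated graded, and a Newton-polygon / slope analysis of the possible homogeneous derivations shows that $z$ can never lie in $\ker\partial$ (here $\deg(p)\geq 2$ is essential, since for $\deg(p)=1$ one has $D_p\cong\C^2$ and many fibrations exist) and that the only admissible ``slopes'' are those of the $x$- and $y$-fibrations. This is the step I expect to be the \textbf{main obstacle}, since it requires controlling all homogeneous LNDs of the degenerate surface and then lifting the conclusion back to $D_p$.

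Granting the classification, the group structure follows. Any $\varphi\in\mathrm{Aut}_{\mathrm{alg}}(D_p)$ acts by conjugation on the set of LNDs, hence permutes the two families of fibrations; composing with the involution $I(x,y,z)=(y,x,z)$ if necessary, I may assume $\varphi$ preserves each of the $x$- and $y$-fibrations. Such a $\varphi$ must then preserve the degenerate-fiber locus, which sits over the zeros of $p$ in the common base coordinate $z$, and this forces $\varphi$ to act on $z$ by an affine map permuting the roots of $p$. In the generic case no such nontrivial affine map exists, so the residual action on the base is trivial and $\varphi$ lies in the group generated by shears and the hyperbolic ($\C^*$-) automorphisms $H$; in the non-generic case one picks up precisely the group $\tilde\Gamma$. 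A reduction argument that decreases the $x$- or $y$-degree by successive shears then writes $\varphi$ as a word in $G_x$, $G_y$, $H$ and $J$.

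It remains to identify the precise product structure. The hyperbolic automorphisms normalise both $G_x$ and $G_y$ (conjugating a shear field merely rescales it), and $J$ conjugates $G_x$ onto $G_y$, which produces the semidirect factor $H\rtimes J$ acting on $G_0=\langle G_x,G_y\rangle$. To see that $G_0=G_x\ast G_y$ is a genuine free product I would run a ping-pong / Bass--Serre argument on the tree of fibrations: $G_x\cap G_y=\{\mathrm{id}\}$ because a common element fixes both $x$ and $y$, and uniqueness of reduced words follows since an alternating product of nontrivial $x$- and $y$-shears strictly changes the fibration type and so cannot equal the identity. Assembling these pieces yields $\mathrm{Aut}_{\mathrm{alg}}(D_p)=G_0\rtimes(H\rtimes J)$.
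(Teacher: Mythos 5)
Your proposal inverts the paper's division of labor, and the part you work hardest on is the part the paper does not prove at all. The paper imports Makar-Limanov's generation theorem from \cite{makar1} (x-shears, hyperbolic rotations, the involution, plus $\tilde\Gamma$ in the non-generic case generate $\mathrm{Aut}_{\mathrm{alg}}(D_p)$) and its own contribution is purely structural: a single degree lemma showing that the $z$-component of any nontrivial reduced word in $x$- and $y$-shears, viewed in $\C[x,x^{-1},z]$, has strictly positive and strictly growing $x$-degree, hence is never of the form $az+b$; everything (normality of $G_0$, $G_0\cap(H\rtimes J)=\{\mathrm{id}\}$, freeness of $G_x\ast G_y$) follows from that. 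You instead attempt to re-derive the generation statement from a classification of LNDs, and you leave exactly that step --- which you rightly call the main obstacle --- as a sketch; the associated-graded/Newton-polygon analysis is the entire content of \cite{makar1} and \cite{makar3}, not a routine verification. Moreover your deduction from the classification has its own hole: an automorphism carries the kernel of the standard LND only to an $\mathrm{SAut}_{\mathrm{alg}}(D_p)$-conjugate of $\C[x]$ or $\C[y]$, not to $\C[x]$ or $\C[y]$ themselves, so composing with $I$ alone does not make $\varphi$ preserve the two coordinate fibrations, and the ``reduction argument'' you then invoke is the generation theorem itself rather than a consequence of it.

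The decisive gap, however, is the free-product step. Trivial intersection $G_x\cap G_y=\{\mathrm{id}\}$ is far from sufficient; one must show that no nontrivial alternating word is the identity, and your justification --- that such a word ``strictly changes the fibration type'' --- names no invariant and proves no strict monotonicity (a Bass--Serre tree, in particular, presupposes the very splitting you are trying to establish). Worse, nothing in your sketch uses $\deg(p)\geq 3$, yet the conclusion is false for $\deg(p)=2$: on $xy=z^2-1$ the shears with constant $f$ are linear, e.g. $(x,y,z)\mapsto(x,\,y+2cz+c^2x,\,z+cx)$, and together the constant $x$- and $y$-shears generate the image of $SL_2(\C)$ in $\mathrm{SO}_3(\C)\cong PSL_2(\C)$, a group with torsion and plenty of relations, whereas a free product of the torsion-free groups $G_x$ and $G_y$ would be torsion-free. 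So any degree-blind argument of the kind you propose would prove a false statement. This is precisely where the paper's lemma earns its keep: the inequality $\deg(p)(\deg(f_i)+1)-1>\deg(f_i)+1$ in its proof is exactly where $\deg(p)\geq 3$ enters, and the same lemma simultaneously yields $G_0\cap(H\rtimes J)=\{\mathrm{id}\}$ (elements of $H\rtimes J$ have $z$-coordinate of the form $az+b$, nontrivial words in shears never do), a point your proposal needs for the semidirect product decomposition but never addresses. To repair your proof you must either import the paper's degree computation or exhibit an equivalent quantitative invariant that visibly fails in degree $2$.
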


We will give a proof using the following main theorem in \cite{makar1} .

\begin{theorem}[\cite{makar1}]  
Let $\mathrm{deg}(p)\geq 3$ and let $p$ be generic as above, then 
 the group of algebraic automorphisms of $D_p$ is generated by the following automorphisms: 
\begin{list}{}{}
\item x-shears: $\Delta_f(x,y,z)=\left(x,\frac{p(z+xf(x))}{x},z+xf(x)\right)$ for $f\in\C[z]$
\item Hyperbolic rotations: $H_\lambda(x,y,z)=(\lambda x, \lambda^{-1} y, z)$ for $\lambda \in \C^*$
\item Involution: $I(x,y,z)=(y,x,z)$.
\end{list}
Note that y-shears are exactly the automorphisms of the form $I\Delta_f I$. \\

In the non generic case or if $\mathrm{deg}(p) = 2$ one has to add (the finite group) $\tilde \Gamma$ of automorphisms coming from symmetries of $p$:
\begin{list}{}{}
\item  $\gamma (x, y, z) = (x, a_0 y, \gamma (z))$, where $\gamma (z) = a_0 z + b$ is such that $ p \circ \gamma (z) = a_0 p(z)$
\end{list}

\end{theorem}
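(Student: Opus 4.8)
The plan is to prove the generation statement by first classifying all locally nilpotent derivations on $\C[D_p]$ and then running a degree-reduction argument in the spirit of the Jung--van der Kulk theorem for $\C^2$. The first and technically heaviest step is to show that every nontrivial LND on $D_p$ is, up to conjugation by a shear automorphism, a ``vertical'' field for one of the two $\mathbb{A}^1$-fibrations $x\colon D_p\to\C$ or $y\colon D_p\to\C$; equivalently, its kernel is $\C[x]$ or $\C[y]$ and the field is a kernel-element multiple of a basic shear field $SF_i^x$ or $SF_i^y$ of Definition \ref{shear}. To establish this I would use the standard machinery for LNDs on normal affine surfaces: a nontrivial LND $\partial$ induces an $\mathbb{A}^1$-fibration whose general fiber is an orbit closure, its kernel is a polynomial ring $\C[\ell]$, and the configuration of degenerate fibers is rigidly constrained by the geometry of $D_p$ --- in particular the fibration $x$ has a single degenerate fiber over $x=0$ consisting of $\deg p$ disjoint lines. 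Comparing the possible degenerate-fiber configurations forces $\ell$ to coincide, up to scalar and a shear, with $x$ or $y$.

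With the LND classification in hand, I would fix an arbitrary $\phi\in\mathrm{Aut}_{\mathrm{alg}}(D_p)$ and measure its complexity by a weighted degree of the components $\phi^*x,\phi^*y$ read off from the canonical form (1). The reduction step is to show that whenever this degree is not minimal, the leading terms of $\phi^*x$ and $\phi^*y$ satisfy an algebraic relation forced by $\phi^*x\cdot\phi^*y=p(\phi^*z)$, and that this relation can only be balanced by post-composing $\phi$ with a single x- or y-shear $\Delta_f$ (respectively $I\Delta_f I$); doing so strictly lowers the degree. Iterating, one reaches an automorphism of minimal complexity, for which $\phi^*x$ is (up to scalar) either $x$ or $y$. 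Such a map preserves or swaps the two fibrations and is therefore a hyperbolic rotation $H_\lambda$, possibly composed with the involution $I$ and, in the non-generic base case, a symmetry $\gamma\in\tilde\Gamma$ coming from an affine automorphism of $\C$ permuting the roots of $p$. Re-composing the removed shears, rotations, involution and symmetry expresses $\phi$ as a word in the claimed generators.

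The main obstacle is the LND classification of the first step: ruling out ``exotic'' derivations requires careful control of the degree module and of the multiplicities in the degenerate fiber, and this is exactly where the hypotheses enter. The condition $\deg p\geq 3$ guarantees that the two fibrations $x$ and $y$ are genuinely inequivalent (for $\deg p=1$ the surface is $\C^2$, and for $\deg p=2$ an extra symmetry group $\tilde\Gamma$ always survives), while genericity --- no affine automorphism of $\C$ permuting the roots of $p$ --- ensures that the only automorphisms fixing the fibration $x$ up to scalar are the hyperbolic rotations, so that no $\tilde\Gamma$ need be adjoined to the generating set. A secondary subtlety is the bookkeeping in the reduction: one must verify that the leading-term cancellation is strict at each stage, so that the process terminates, and that the chosen degree function is invariant enough under the generators that only the intended shear reduces it. The amalgamated-free-product refinement that yields the structure of Theorem \ref{makar} would then follow by organizing these shears into an action on the associated Bass--Serre tree, but for the bare generation statement the reduction argument alone suffices.
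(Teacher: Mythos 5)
The first thing you should know is that the paper does not prove this statement at all: its ``proof'' is the single line ``see \cite{makar1}''; the theorem is imported verbatim from Makar-Limanov and then used as the input for Theorem \ref{makar}. So your attempt cannot be measured against an in-paper argument; the question is whether it stands on its own, and as written it does not, because both of its pivotal steps are announced rather than proven. Step one, the classification of LNDs (every nontrivial LND is, up to shear conjugation, a kernel-element multiple of some $SF_i^x$ or $SF_i^y$), is itself a hard theorem --- it is essentially the content of \cite{makar3} and of Daigle's work --- and invoking ``standard machinery for LNDs on normal affine surfaces'' plus the assertion that the degenerate-fiber configuration ``forces'' $\ell$ to be $x$ or $y$ is a restatement of the goal, not an argument. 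Note also that within this paper the logical order is the \emph{reverse} of yours: the LND classification (Theorem \ref{shearlnd}) is deduced from the statement you are trying to prove, via Theorem \ref{makar}, Proposition \ref{propspecial} and Lemma \ref{free}; so you cannot borrow it from here without circularity, and proving it independently is at least as much work as the theorem itself. Step two, the Jung--van der Kulk reduction, hinges entirely on the claim that non-minimal leading terms ``can only be balanced by post-composing $\phi$ with a single x- or y-shear'' with strict drop in degree, together with the claim that a minimal-complexity automorphism has $\phi^*x$ proportional to $x$ or $y$. Those two cancellation statements are the whole content of such a proof --- they are where Makar-Limanov's actual filtration and leading-term analysis lives --- and you give no argument for either.

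There is also a structural defect: the two halves of your plan never interact. The degree reduction of step two, as you describe it, makes no use of the LND classification of step one; conversely, if the reduction works, step one is superfluous for the bare generation statement (your closing remark about Bass--Serre trees concerns the free-product structure of Theorem \ref{makar}, not this theorem). Either route can in principle be completed --- classify LNDs first and then determine how an arbitrary automorphism permutes the two $\mathbb{A}^1$-fibrations, or run the leading-term argument directly on $\C[D_p]$ using the canonical form $(1)$, as Makar-Limanov does --- but you must actually carry one of them out. As it stands, your proposal correctly identifies the landmarks (where $\deg p\geq 3$ and genericity enter, why $\tilde\Gamma$ must be adjoined when $\deg p=2$ or in the non-generic case), but it defers every load-bearing step to machinery it does not supply.
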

\begin{proof}
 see \cite{makar1}
\end{proof}
\begin{lemma}
 For $\mathrm{deg}(p) \geq3$ a nontrivial composition of x- and y- shears will never have a z-coordinate of the form $az+b$.
\end{lemma}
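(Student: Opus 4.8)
The plan is to reduce the statement to a growth (degree) estimate along a single generic fibre of the projection $z\colon D_p\to\C$. First I would normalise the word: since two consecutive $x$-shears $\Delta_{f_1}\circ\Delta_{f_2}$ compose to the single $x$-shear $\Delta_{f_1+f_2}$ (both fix $x$, and the $z$-coordinates simply add), and likewise for $y$-shears, any nontrivial composition can be rewritten as a reduced alternating word $\sigma=s_m\circ\cdots\circ s_1$ whose factors alternate between nontrivial $x$- and $y$-shears. It therefore suffices to treat such alternating words.

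Next I would isolate the key reduction. If the $z$-coordinate of $\sigma$ were $az+b$, i.e.\ $\sigma^*z=az+b$, then restricting to a fibre $F_c=\lbrace z=c\rbrace$ would give $(\sigma^*z)|_{F_c}=ac+b$, a constant. For $c$ not a root of $p$ the fibre $F_c=\lbrace xy=p(c)\rbrace$ is isomorphic to $\C^*$ with coordinate $t=x$ (and $y=p(c)/t$), and every regular function on $D_p$ restricts to a Laurent polynomial in $t$. Hence it is enough to prove the following: \emph{for a nontrivial alternating shear word and generic $c$, the restriction $z_n:=(\sigma^*z)|_{F_c}$ is a non-constant Laurent polynomial.}

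Then I would run a degree induction on $F_c$. Write $u_k,v_k,w_k$ for the pole order at $t=\infty$ of the intermediate coordinates $x_k,y_k,z_k$ (the coordinates of $s_k\circ\cdots\circ s_1$ restricted to $F_c$), and record the exact relation $u_k+v_k=n\,w_k$ whenever $w_k>0$, which comes from $x_ky_k=p(z_k)$ together with $\deg p=n$. Starting from $u_0=1,\ v_0=-1,\ w_0=0$, an $x$-shear (fixing $x$, with $z_{k+1}=z_k+x_kf(x_k)$) raises $w$ to $(\deg f+1)u_k$, and a $y$-shear symmetrically raises it to $(\deg g+1)v_k$. I would package this into the invariant: after an $x$-shear one has $w\ge u>0$ and $v\ge (n-1)w>w$, so the next $y$-shear strictly increases $w$; after a $y$-shear the mirror inequalities hold. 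Because every inequality is strict, the leading terms never cancel, and one concludes $w_n>0$, so $z_n$ is non-constant. If the word instead begins with a $y$-shear, the same computation run with the pole order at $t=0$ (equivalently, applying the involution $I$, which swaps $x\leftrightarrow y$ and the punctures $0\leftrightarrow\infty$ of $\C^*$) produces a positive pole order at $t=0$ for $z_n$.

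The hard part will be ensuring that no leading-term cancellation occurs as the word alternates, and this is precisely where the hypothesis $\deg p\ge 3$ is indispensable: the step $v\ge(n-1)w>w$ requires $n-1>1$. For $n=2$ one obtains only $v\ge w$, which permits $u=w$ and hence a genuine cancellation at the following shear; indeed in that case shears can realise affine $z$-maps, so the lemma genuinely fails. A secondary point to handle carefully is the bookkeeping at the first letter of the word, where $y_0=p(c)/t$ has no pole at $t=\infty$; this is resolved cleanly by tracking the pole order at whichever puncture of $\C^*$ the first shear pushes towards infinity, i.e.\ by the $x\leftrightarrow y$ symmetry.
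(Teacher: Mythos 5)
Your proof is correct, and its engine is the same one driving the paper's argument: a valuation-growth induction along the reduced alternating word, powered by the relation coming from $xy=p(z)$ (your $u_k+v_k=n\,w_k$ is exactly the paper's $\deg(y_i)=\deg(p)\deg(z_i)-\deg(x_i)$) and by the strict inequality that only $\deg p\ge 3$ provides, which is what forbids cancellation of leading terms. The genuine difference is where the bookkeeping lives. The paper never restricts to a fibre: it views $x_i,y_i,z_i$ as elements of $\C[x,x^{-1},z]$ (substituting $y=p(z)/x$) and tracks the $x$-degree, showing that each newly added term $x_if_{i+1}(x_i)$ or $y_if_{i+1}(y_i)$ has strictly larger $x$-degree than everything accumulated before, so $z_i$ retains positive $x$-degree and can never be $az+b$. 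You instead specialize to a fibre $z=c$ with $p(c)\neq 0$ and work with one-variable Laurent polynomials on $\C^*$; your pole order at $t=\infty$ is precisely the paper's $x$-degree seen at a generic $c$, and since all leading coefficients in your induction are nonzero constants (powers of leading coefficients of $f$, $p$, and the $x_k$), no genericity beyond $p(c)\neq 0$ is actually needed. What your version buys: single-variable bookkeeping, a crisply stated invariant ($w\ge u>0$, $v\ge(n-1)w>w$ and its mirror) in place of the paper's ad hoc chain of degree estimates, and full transparency about the role of $n\ge 3$ -- including your correct side remark that for $n=2$ the lemma genuinely fails (on $xy=z^2-1\cong SL_2(\C)/T$ the Whitehead factorization of $\mathrm{diag}(\lambda,\lambda^{-1})$ into elementary matrices exhibits hyperbolic rotations, which fix $z$, as nontrivial shear words). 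What the paper's version buys: it skips the reduction step entirely, reading off ``positive $x$-degree'' directly from the automorphism, which is marginally shorter and keeps the statement about the full $z$-component (a polynomial identity on $D_p$) rather than its restriction to fibres. Both treat the word starting with a $y$-shear by the same $x\leftrightarrow y$ symmetry, so the two proofs are interchangeable in the application to the free-product theorem of Makar-Limanov.
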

\begin{proof}
 Since composition of x- (resp. y-) shears are x- (resp. y-) shears again, $G_x$ and $G_y$ are subgroups and we can assume that the composition is written in a
reduced way (i.e. alternating x- and y- shears). For instance take an element $\Delta_{f_n}^x \Delta_{f_{n-1}}^y \cdots \Delta_{f_2}^y\Delta_{f_1}^x$ (the
letter
$\lbrace x,y \rbrace$ denotes whether it is a x- or a y-shear). Denote the image of $(x,y,z)=(x_0,y_0,z_0)$ after the first $i$ shears by $(x_i,y_i,z_i)$ e.g.
for $i$ odd we get
\[x_i=x_{i-1}, \ y_i=\frac{p(z_{i-1} + x_{i-1}f_i(x_{i-1}))}{x_{i-1}} \ \ \mathrm{and} \ z_i=z_{i-1} + x_{i-1}f_i(x_{i-1}).\] Since $y=\frac{p(z)}{x}$ we can
see the elements $x_i,y_i, z_i$ as unique elements in $\C[x,x^{-1},z]$ and therefore it makes sense to speak of the x-degree of such an element. The goal
is to see that $z_i$ has a strictly positive x-degree for $i>0$ and is therefore not of the form $az+b$. After the first shear $z_1=z + xf_1(x)$ is obviously of
positive x-degree, more precisely it has degree $\mathrm{deg}(f_1)+1$. Composing inductively with the proceeding shear automorphism a term $x_{i}f_{i+1}(x_{i})$
or
$y_{i}f_{i+1}(y_i)$ will be added. If we can see that the x-degree of a such term is always bigger than all previous ones then the claim is proven. Indeed the
x-degree of $y_{i}f_{i+1}(y_{i})$ is $\mathrm{deg}(y_i)(\mathrm{deg}(f_{i+1})+1)\geq\mathrm{deg}(y_i)=\mathrm{deg}(p)\mathrm{deg}(z_{i-1} + x_{i-1}f_i(x_{i-1}))
- \mathrm{deg}(x_{i-1})$ which is by induction $\mathrm{deg}(p)\mathrm{deg}(x_{i-1})(\mathrm{deg}(f_i) +1) -
\mathrm{deg}(x_{i-1})=\mathrm{deg}(x_{i-1})(\mathrm{deg}(p)(\mathrm{deg}(f_i)+1) -1)>\mathrm{deg}(x_{i-1})(\mathrm{deg}(f_i)+1) =
\mathrm{deg}(x_{i-1}f_i(x_{i-1}))$. The inequality from the second last step follows from the fact that $\mathrm{deg}(p)\geq3$. The same calculation holds for
$x_{i}f_{i+1}(x_{i})$. And if our arbitrary elements starts with a y-shear then of course the same calculation holds when we exchange $x$ and $y$. 
\end{proof}

\begin{proof} [Proof of theorem \ref{makar}] 
To see that $\mathrm{Aut}_{\mathrm{alg}}(D_p) = G_0 \rtimes (H \rtimes J)$  in the generic case it is necessary to verify several things. First we see that
$\mathrm{Aut}_{\mathrm{alg}}(D_p)
= G_0 \rtimes H_0$ where $G_0$ is the group generated by automorphisms of the form $\Delta_f$ and $I\Delta_f I$ and $H_0$ is generated by automorphisms
$H_\lambda$ and $I$. $G_0$ is indeed normal since $I\Delta_f I$ and $II\Delta_f II = \Delta_f \in G_0$ and $H_\lambda^{-1}\Delta_f H_\lambda = \Delta_{\lambda
f(\lambda \cdot)} \in G_0$. Since $IH_\lambda = H_\lambda^{-1} I$ we have $h^{-1}gh\in G_0$ for all elements $h \in H_0$ and $g\in G_0$. By the theorem above it
is clear that $G_0$ and $H_0$ generate $\mathrm{Aut}_{\mathrm{alg}}(D_p)$ so the last thing to check is that the intersection is trivial. We observe that all
elements of $H_0$
fix the z-coordinate but no nontrivial element from $G_0$ does by the previous lemma. Take a look at the surjective homomorphism $G_x * G_y \rightarrow G_0$
sending a word to its interpretation in the group, to see that it is injective it is sufficient to see that the identity map can't be written as a nontrivial
composition of shear automorphisms, but this is clear since a nontrivial composition of shears never fixes the third component. To finish the proof we have to
see that $H_0$, the subgroup generated by hyperbolic rotations and the involution is $H \rtimes J$, but this is clear since $IH_\lambda I = H_\lambda^{-1}$ and
therefore the subgroup $H$ generated by hyperbolic rotations is normal and $I$ is orientation reversing and therefore not part of the hyperbolic rotations. The
statement in the non generic case is easy to see as well.
\end{proof}
 Here are some consequences of the theorem, remember that all of them hold just for $\mathrm{deg}(p)\geq 3$.
\begin{remark}
 In the generic case the group of algebraic volume preserving automorphisms is therefore $\mathrm{Aut}_{\mathrm{alg}}^\omega(D_p) = G_0 \rtimes H$, since shears
and hyperbolic
automorphism are volume preserving and the involution is volume reversing. The (non trivial) elements of $\tilde \Gamma$ from the
non generic case multiply the volume form by a (non zero) root of unity, so the group can be bigger since
it is possible to get an order two volume preserving automorphism of the form $I\circ \gamma$ with $\gamma \in \tilde\Gamma$. In this case the group of volume
algebraic volume preserving automorphisms is $G_0 \rtimes(H\rtimes\Z_2)$.
\end{remark}
\begin{proposition} \label{propspecial}
 The group of special automorphisms $\mathrm{SAut}_{\mathrm{alg}}(D_p)$ (i.e. the group generated all algebraic $\C^+$-actions) is the
group $G_0\cong G_x * G_y$ generated by the shear automorphisms.
\end{proposition}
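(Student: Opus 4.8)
The plan is to prove the two inclusions $G_0\subseteq\mathrm{SAut}_{\mathrm{alg}}(D_p)$ and $\mathrm{SAut}_{\mathrm{alg}}(D_p)\subseteq G_0$ separately, with Theorem \ref{makar} as the main input. For the first I would observe, following Definition \ref{shear}, that every $x$-shear $\Delta_h$ is the time-one map of the locally nilpotent field $h(x)\,SF_0^x$: since $x$ lies in the kernel of $SF_0^x$ this field is again an LND, and its flow adds $t\,x\,h(x)$ to the $z$-coordinate while fixing $x$, so at $t=1$ it is exactly $\Delta_h$. Symmetrically for $y$-shears. Hence $G_x$ and $G_y$, and therefore the group $G_0=\langle G_x,G_y\rangle$ they generate, consist of compositions of flows of LNDs and lie in $\mathrm{SAut}_{\mathrm{alg}}(D_p)$.

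The substance is the reverse inclusion, for which it suffices to place every algebraic $\C^+$-action $\{\psi^t\}$ inside $G_0$. By Theorem \ref{makar} I may use the decomposition $\mathrm{Aut}_{\mathrm{alg}}(D_p)=G_0\rtimes(H\rtimes J)$, in which $G_0$ is normal, $H\cong\C^*$, and $J$ is finite; let $q\colon\mathrm{Aut}_{\mathrm{alg}}(D_p)\to H\rtimes J$ be the quotient homomorphism with kernel $G_0$. I would first kill the $J$-component: the group $\{\psi^t\}$, being a homomorphic image of $\C^+$, is divisible, so its image under the further projection onto the finite group $J$ is a finite divisible group and hence trivial. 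Consequently $q(\psi^t)$ lies in the identity component $H\cong\C^*$ for every $t$, and $t\mapsto q(\psi^t)$ is a one-parameter subgroup of $\C^*$, say $q(\psi^t)=H_{\lambda_t}$.

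It remains to show this last homomorphism is trivial, and this is where I expect the real obstacle to lie, since $\mathrm{Aut}_{\mathrm{alg}}(D_p)$ is not a finite-dimensional algebraic group and one cannot simply invoke ``there are no nontrivial algebraic homomorphisms $\C^+\to\C^*$'' as a black box. Two routes seem viable. The first is to verify that, because $\{\psi^t\}$ is an \emph{algebraic} $\C^+$-action, the scalar $\lambda_t$ depends regularly on $t$; a regular group homomorphism $\C^+\to\C^*$ is given by a nowhere-vanishing polynomial, hence is constant and equal to $1$. The second, to my mind cleaner, route is infinitesimal: the generating LND $\Theta=\tfrac{d}{dt}\big|_{t=0}\psi^t$ is locally nilpotent, so its Jordan--Chevalley semisimple part must vanish. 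Writing $\psi^t=g_t\,H_{\lambda_t}$ and differentiating gives $\Theta=\Xi+c\,(x\partial_x-y\partial_y)$ with $\lambda_t=e^{ct}$, where $\Xi$ is tangent to the (unipotent) group $G_0$; the diagonalizable summand $c\,(x\partial_x-y\partial_y)$, read off from the weight grading $\C[D_p]=\bigoplus_w A_w$ of the $H$-action (so $A_0=\C[z]$, $A_w=x^w\C[z]$, $A_{-w}=y^w\C[z]$ for $w>0$), is the semisimple part, whence $c=0$. The technical point to pin down in this route is precisely the identification of that summand with $\Theta_s$, i.e. that $\Xi$ contributes nothing semisimple. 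Either way $q(\psi^t)=e$, so $\psi^t\in G_0$; together with the first inclusion and the free-product assertion of Theorem \ref{makar} this yields $\mathrm{SAut}_{\mathrm{alg}}(D_p)=G_0\cong G_x\ast G_y$.
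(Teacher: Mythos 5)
Your proposal is correct and takes essentially the same route as the paper: project the given algebraic $\C^+$-action through the decomposition $\mathrm{Aut}_{\mathrm{alg}}(D_p)=G_0\rtimes(H\rtimes J)$ of Theorem \ref{makar}, observe the image in the finite part $J$ is trivial, and kill the $H$-component because an algebraic one-parameter subgroup of $H\cong\C^*$ must be constant --- your Route 1 is exactly the paper's one-line argument that ``one parameter subgroups in $H$ can never be algebraic $\C^+$-actions.'' Your explicit verification of $G_0\subseteq\mathrm{SAut}_{\mathrm{alg}}(D_p)$ and the alternative infinitesimal Route 2 are refinements the paper leaves implicit, but the core argument coincides.
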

\begin{proof}
 Take any algebraic one parameter subgroup $\psi: \C \rightarrow \mathrm{Aut}_{\mathrm{alg}}(D_p)$. Since we have the projection homomorphism
$\mathrm{Aut}_{\mathrm{alg}}(D_p)=G_0 \rtimes (H \rtimes J)\rightarrow H\rtimes J$  we get an induced algebraic one parameter subgroup on $H \rtimes J$ and
hence on its
connected component $H$ the subgroup of hyperbolic rotations, but this subgroup has to be trivial since one parameter subgroups in $H$ can never be algebraic
$\C^+$ action. Hence $\psi$ has its image in the shear automorphisms.
\end{proof}
\begin{lemma} \label{free}
 A smooth one parameter subgroup $\psi: \C \rightarrow G_x * G_y$ is conjugated to a one parameter subgroup $\psi^t$ either in $G_x$ or in $G_y$.
\end{lemma}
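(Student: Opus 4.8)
The plan is to reduce the statement to elementary facts about free products, using the structure $G_0=G_x*G_y$ recorded in Proposition \ref{propspecial}. The two inputs I would set up first are: (i) each factor is \emph{abelian} --- a composition of $x$-shears is again an $x$-shear whose shear parameter is the sum of the two, so $G_x\cong(\C[z],+)$, and likewise $G_y$; and (ii) if $\psi$ is nontrivial, its image $H:=\psi(\C)\subseteq G_x*G_y$ enjoys two crucial properties. It is \emph{abelian}, since $\psi^s\psi^t=\psi^{s+t}=\psi^t\psi^s$, and it is \emph{divisible}, being a homomorphic image of the divisible group $(\C,+)$: for $\psi^t\in H$ and any integer $n\geq 1$ we have $\psi^t=(\psi^{t/n})^n$ with $\psi^{t/n}\in H$. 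These two properties of $H$ are what drive the whole argument, and notably the smoothness hypothesis is not needed here beyond guaranteeing that $\psi$ is a genuine homomorphism; it only enters at the very end, where it is visibly preserved under conjugation.

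Next I would invoke the centralizer dichotomy in a free product $A*B$, which is cleanest to see via the Bass--Serre tree $T$ on which $G_x*G_y$ acts with trivial edge stabilizers and vertex stabilizers the conjugates of $G_x$ and $G_y$. For $1\neq g$ there are exactly two cases. If $g$ is conjugate into a factor, say $g=u\,a\,u^{-1}$ with $e\neq a\in G_x$, then $a$ fixes a unique vertex $v$ (any fixed edge would force a nontrivial edge stabilizer), so anything commuting with $a$ preserves $\mathrm{Fix}(a)=\{v\}$ and hence lies in $\mathrm{Stab}(v)=G_x$; since $G_x$ is abelian this gives $C_{G_x*G_y}(a)=G_x$, whence $C_{G_x*G_y}(g)=u\,G_x\,u^{-1}$ (and symmetrically for $G_y$). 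If instead $g$ is \emph{not} conjugate into a factor, then $g$ acts as a hyperbolic isometry of $T$ with an axis, and its centralizer is infinite cyclic, $C_{G_x*G_y}(g)\cong\Z$. (This is the standard centralizer theorem for free products; I would either cite it or justify it by the short tree argument just sketched.)

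Finally I would combine the two ingredients. Assume $\psi$ is nontrivial and pick $t_0$ with $g:=\psi^{t_0}\neq e$. Since $H$ is abelian and $g\in H$, we have $H\subseteq C_{G_x*G_y}(g)$. In the second case of the dichotomy this would force $H$ to be a subgroup of $\Z$; but $H$ is divisible and the only divisible subgroup of $\Z$ is trivial, contradicting $g\neq e$. Hence the first case holds, say $C_{G_x*G_y}(g)=u\,G_x\,u^{-1}$, and therefore $u^{-1}H\,u\subseteq G_x$. Then $t\mapsto u^{-1}\psi^t u$ is a one parameter subgroup contained in $G_x$ that is conjugate to $\psi$ (and still smooth, being the composition of $\psi$ with the fixed automorphism $u$); the $G_y$-alternative arises identically. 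The trivial $\psi$ lies in $G_x$ already. The main obstacle is the centralizer dichotomy itself, and the conceptual heart of the proof is the observation that divisibility of $H$ --- a direct consequence of $\C$ being divisible --- is exactly what rules out the infinite-cyclic ("hyperbolic'') case and pins $\psi$ into a single factor.
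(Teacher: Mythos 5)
Your proof is correct, but it takes a genuinely different route from the paper's. The paper argues in three steps: first, it uses the parity of word lengths of commuting elements in a free product (its Lemma \ref{freelemma}) together with $\psi(z)=\psi(z/n)^n$ to show that each individual $\psi(z)$ is conjugate into a factor; second, it bounds the word lengths of the subgroup $\psi(\Q[\sqrt2,i])$ (generated by the four elements $\psi(1),\psi(\sqrt2),\psi(i),\psi(i\sqrt2)$, each conjugated by a fixed element) and invokes Serre's theorem that a subgroup of bounded length is conjugate into a single factor; third, and essentially, it uses the smoothness (continuity) of $\psi$ together with the closedness of $G_x$ and $G_y$ in the automorphism group to pass from the dense subgroup $\Q[\sqrt2,i]$ to all of $\C$. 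Your argument replaces steps two and three by the centralizer dichotomy for free products: $C_{G_x*G_y}(uau^{-1})=uC_{G_x}(a)u^{-1}\subseteq uG_xu^{-1}$ for $e\neq a\in G_x$, while the centralizer of an element not conjugate into a factor is infinite cyclic (both facts correctly justified by the action on the Bass--Serre tree, where trivial edge stabilizers force $\mathrm{Fix}(a)$ to be a single vertex, respectively force the pointwise stabilizer of an axis to be trivial); then divisibility of $H=\psi(\C)$ kills the infinite cyclic case, since the only divisible subgroup of $\Z$ is trivial. Note that both proofs exploit divisibility of $(\C,+)$, but yours does so at the level of the whole image rather than element by element. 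What your approach buys is substantial: it is purely algebraic, needs neither the density trick nor the closedness of the factors, and hence proves the stronger statement that \emph{any} abstract homomorphism $(\C,+)\to G_x*G_y$ --- indeed any divisible abelian subgroup of a free product --- is conjugate into a factor; the smoothness hypothesis of the lemma is only needed to record that the conjugated subgroup is again smooth. What the paper's approach buys is independence from centralizer structure theory (it only needs Serre's bounded-length criterion, which it cites), at the price of genuinely using the topology. Two cosmetic points: the shear parameter of an $x$-shear is a polynomial in $x$, so $G_x\cong(\C[x],+)$ rather than $(\C[z],+)$; and your appeal to abelianness of the factors, while true and convenient, is not essential, since $C_{G_x*G_y}(a)=C_{G_x}(a)\subseteq G_x$ already suffices for the containment you need.
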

In order to prove this lemma we need some facts about free groups. Recall that for two groups $G$ and $H$ any element $g$ in $G * H$ has a unique reduced form
with length denoted by $l(g)$.
\begin{theorem}
A subgroup $K$ of $G*H$ is conjugated to a subgroup in either $G$ or $H$ if and only if $\mathrm{sup}\left(l(k); k\in K \right) < \infty$.
\end{theorem}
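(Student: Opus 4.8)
The plan is to let $G*H$ act on its Bass--Serre tree $T$ and to reinterpret the hypothesis $\sup_{k\in K}l(k)<\infty$ as the statement that $K$ acts on $T$ with a bounded orbit; such an action has a fixed vertex, and the vertex stabilisers are exactly the conjugates of $G$ and of $H$. The easy implication is immediate: if $gKg^{-1}\subseteq G$ for some $g\in G*H$, then every $k\in K$ has the form $k=g^{-1}ug$ with $l(u)\le 1$, whence $l(k)\le 2\,l(g)+1$ and the lengths are uniformly bounded. So I concentrate on the converse.

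Recall the tree $T$: its vertices are the left cosets $gG$ and $gH$ $(g\in G*H)$, for every $g$ there is an edge joining $gG$ to $gH$, and $G*H$ acts by left translation. This graph is a tree; left translation sends $G$-cosets to $G$-cosets and $H$-cosets to $H$-cosets, so the action preserves the bipartition of $T$ and in particular inverts no edge; the stabiliser of $gG$ is $gGg^{-1}$, that of $gH$ is $gHg^{-1}$, and edges have trivial stabiliser. Fix the base vertex $v_0=G$. Writing $k=s_1\cdots s_n$ in reduced form (each $s_i$ lying in one factor, so $n=l(k)$) and noting that every $s_i$ moves $v_0$ a distance at most $2$, the triangle inequality together with equivariance of the metric gives
\[
 d_T(v_0,k v_0)\ \le\ \sum_{i=1}^{n} d_T(v_0,s_i v_0)\ \le\ 2\,l(k),
\]
where $d_T$ is the path metric on $T$.

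Now suppose $L:=\sup_{k\in K}l(k)<\infty$. For any $k,k'\in K$ we have $k^{-1}k'\in K$, and therefore $d_T(kv_0,k'v_0)=d_T(v_0,(k^{-1}k')v_0)\le 2\,l(k^{-1}k')\le 2L$, so the orbit $Kv_0$ has diameter at most $2L$. A group acting on a tree with a bounded orbit fixes a point: the circumcentre of the bounded convex hull of $Kv_0$ is a canonically defined point of $T$, hence fixed by $K$ (this is the fixed-point theorem for actions on trees; see Serre, \emph{Trees}). This fixed point is a vertex or the midpoint of an edge; in the latter case $K$ preserves that edge, and since no edge is inverted it fixes both endpoints. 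In either case $K$ fixes a vertex $gG$ (resp.\ $gH$), so $K\subseteq gGg^{-1}$ (resp.\ $gHg^{-1}$), i.e.\ $K$ is conjugate into $G$ (resp.\ $H$), as required.

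The crux of the argument is the passage from a bounded orbit to a global fixed point; everything else is formal. It is here that the \emph{uniform} bound is indispensable. Boundedness of $l$ already forces each individual $k\in K$ to be elliptic — if the cyclically reduced form of $k$ had length $\ge 2$ then $l(k^n)\to\infty$ as $n\to\infty$ — but for an abstract action on a tree the condition ``every element elliptic'' does not by itself produce a common fixed point once the group is infinitely generated (Serre). The uniform bound on $l$ rules this out precisely because it bounds the whole orbit, and a bounded subtree, unlike a merely pointwise-elliptic action, carries a canonical centre. Since $K$ is allowed to be infinitely generated, this tree-theoretic route is cleaner than a direct combinatorial induction on word lengths.
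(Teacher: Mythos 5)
Your proof is correct, and it is essentially the same argument as the one the paper relies on: the paper offers no proof of its own here but cites Serre's \emph{Trees} (Theorem 8, p.~36), and Serre's proof of that theorem is precisely your Bass--Serre tree argument — the uniform length bound gives a bounded orbit, a bounded orbit on a tree yields a fixed point, absence of edge inversions upgrades this to a fixed vertex, and vertex stabilizers are exactly the conjugates of $G$ and $H$. Your closing remark on why pointwise ellipticity would not suffice for infinitely generated $K$ is a correct and worthwhile clarification, but it does not change the fact that the route is the cited one.
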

\begin{proof}
See \cite{trees} theorem 8 p.36.
\end{proof}
The following lemma is well known, see e.g. \cite{KrKu}, for making the paper more self contained we give the proof.
\begin{lemma}\label{freelemma}\begin{enumerate}
  \item Every element in $G*H$ is conjugated either to an element in either $G$ or $H$ or to an element of even length $>0$.
\item Two commuting elements of $G*H$ with length $>0$ have either both even or both odd length.
 \end{enumerate}
\end{lemma}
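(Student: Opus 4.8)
The plan is to work throughout with the reduced (normal) form of elements of $G*H$: every nontrivial $g$ is written uniquely as an alternating product $g=s_1 s_2\cdots s_n$ with consecutive syllables $s_i$ lying in different factors, and $l(g)=n$. The key elementary operation is \emph{cyclic reduction}: if $s_1$ and $s_n$ lie in the \emph{same} factor (equivalently, since syllables alternate, if $n$ is odd), then conjugating by $s_1$ replaces $g$ by $s_1^{-1}gs_1=s_2\cdots s_n s_1$, in which the two end syllables $s_n,s_1$ combine into a single element of their common factor; this strictly decreases the length, by $1$ if they combine to a nontrivial element and by $2$ if they cancel. Iterating, every $g$ is conjugate to a \emph{cyclically reduced} element, i.e.\ one that is trivial, has length $1$ (hence lies in $G$ or $H$), or whose first and last syllables lie in different factors. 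In the last case the alternation forces the length to be even. This already proves part (1).

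For part (2) I would first record the auxiliary fact that any element conjugate into a factor has \emph{odd} length. Indeed, writing such an element as $g^{-1}ag$ with $a\in G\setminus\{e\}$, one peels off from $g$ every syllable adjacent to $a$ as long as it lies in $G$ (each such syllable conjugates $a$ to another nontrivial element of $G$), reducing to the form $h^{-1}a''h$ with $a''\in G\setminus\{e\}$ and $h$ either trivial or having its innermost syllable in $H$; in that normal form no cancellation occurs at the two junctions, so the length is $2\,l(h)+1$, which is odd. Then I would split according to the structure of the abelian group $\langle a,b\rangle$: it is either conjugate into one of the factors, or it is infinite cyclic. If it is conjugate into a factor, then $a$ and $b$ are conjugate into the \emph{same} factor by the same element, so by the auxiliary fact they both have odd length. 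If $\langle a,b\rangle=\langle c\rangle$ is infinite cyclic and not conjugate into a factor, then by part (1) the cyclically reduced core of $c$ has even length $d\ge 2$; writing $c=uc_0u^{-1}$ with $c_0$ cyclically reduced one gets $c^{k}=uc_0^{\,k}u^{-1}$, and since $c_0^{\,k}$ has the same first and last syllables as $c_0$, the amount of cancellation at the two junctions is the same for every $k$. Hence $l(c^{k})=kd+\text{(const)}$, so all powers of $c$ have length of the same parity; as $a=c^i$ and $b=c^j$ they share it. In both cases $l(a)\equiv l(b)\pmod 2$.

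The main obstacle is the dichotomy used for $\langle a,b\rangle$: that a two-generator abelian subgroup of a free product is either conjugate into a factor or infinite cyclic. This is exactly the point where the free-product structure is essential, and it can be obtained either from the Kurosh subgroup theorem (a subgroup of $G*H$ is itself a free product of a free group with conjugates of subgroups of the factors, and such a product is abelian only when it degenerates to a single conjugated factor or to $\Z$) or, self-containedly, by a direct normal-form analysis of the equation $ab=ba$, by induction on $l(a)+l(b)$, tracking how the syllables of $a$ and $b$ must cancel against one another. Everything else reduces to the bookkeeping of cancellation at word junctions described above, which presents no real difficulty.
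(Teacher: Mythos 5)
Your part (1) is exactly the paper's argument: cyclic reduction by conjugating away matching end syllables, together with the observation that a cyclically reduced word of length at least $2$ must have even length because its ends lie in different factors. Part (2), however, takes a genuinely different and much heavier route. The paper's proof of (2) is a two-line cancellation argument: if $l(a)=n$ is even and $l(b)=m$ is odd, then the end syllables of $a$ lie in different factors while those of $b$ lie in the same factor, so exactly one of the two junctions in $ab$ and $ba$ joins syllables from a common factor; hence one product has length exactly $m+n$ and the other has length $<m+n$, so $ab\neq ba$. You instead classify the abelian subgroup $\langle a,b\rangle$ (conjugate into a factor, or infinite cyclic) via the Kurosh subgroup theorem, prove that elements conjugate into a factor have odd length, and in the cyclic case track $l(c^k)$ as $k$ varies. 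This works, but note two costs. First, you import Kurosh, or else leave the ``self-contained'' alternative as a sketch --- and that sketched induction, tracking cancellation in the equation $ab=ba$, is essentially a harder version of what the paper does in one stroke. Second, your claim that ``the amount of cancellation at the two junctions is the same for every $k$'' needs justification for small $k$, where cancellation could a priori consume all of $c_0^k$; the clean fix is to choose $u$ of minimal length in the expression $c=uc_0u^{-1}$ (equivalently, cyclically permute $c_0$ until the last syllable of $u$ is neither $t_1^{-1}$ nor $t_d$, where $t_1,t_d$ denote the first and last syllables of $c_0$), after which at most a merge, never a cancellation, occurs at either junction, giving $l(c^k)=2\,l(u)+kd$ if $u$ is trivial and $2\,l(u)+kd-1$ otherwise, for all $k\geq 1$. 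What your approach buys is stronger structural information --- it identifies where commuting pairs actually live, not merely their length parity; what the paper's approach buys is a short, elementary, self-contained proof, which was precisely its stated purpose.
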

\begin{proof}
 (1) Whenever an element has odd length its first and last letter belongs to the same group then after conjugating with the inverse of one of those letters it
is either of even length or the length descends by 2 and we can proceed by induction. (2) Take an element $a$ with even length $n$ and an element $b$ with odd
length $m$, then either $l(ab)=m + n$ and $l(ba)< m+n$ or $l(ab)<m + n$ and $l(ba)= m+n$ and hence they cannot commute. 
\end{proof}
\begin{proof}[Proof of lemma \ref{free}]
 First we show that for all $z\in \C$ the element $\psi(z)$ is conjugated to a shear automorphism (i.e. is conjugated to an element of either $G_x$ or $G_y$).
Assume that
this is not the case, then $a\psi(z)a^{-1}$ were of even length for some $a\in G_x * G_y$. Since $a\psi(z)a^{-1}$ and $a\psi(\frac{z}{n})a^{-1}$ commute
$a\psi(\frac{z}{n})a^{-1}$ is also of even length and
therefore $l(a\psi(z)a^{-1})=l((a\psi(\frac{z}{n})a^{-1})^n)>n$ for all $n$, which is of course a contradiction. Therefore with lemma $\ref{freelemma}$ we have
for each $z$ an
element $g_z$ such that $g_z^{-1} \psi(z) g_z$ is a shear automorphism. Now take an element $m + n\sqrt{2} +i(p + q\sqrt{2}) \in \Q[\sqrt{2},i]$:
\begin{eqnarray*}
 \psi(m + n\sqrt{2} +i(p + q\sqrt{2}))&=& \psi(1)^m\psi(\sqrt{2})^n\psi(i)^p\psi(i\sqrt{2})^q\\
&=&g_1(g_1^{-1}\psi(1)g_1)^m g_1^{-1}g_{\sqrt{2}}(g_{\sqrt{2}}^{-1}\psi(\sqrt{2})g_{\sqrt{2}})^m g_{\sqrt{2}}^{-1} \\
&& \cdot g_i(g_i^{-1}\psi(i)g_i)^m g_i^{-1}g_{i\sqrt{2}}(g_{i\sqrt{2}}^{-1}\psi(i\sqrt{2})g_{i\sqrt{2}})^m g_{i\sqrt{2}}^{-1}.
\end{eqnarray*}
 Therefore the length of elements in $\psi(\Q[\sqrt{2},i])$ is bounded by $2(l(g_1)+l(g_i)+l(g_{\sqrt{2}})+l(g_{i\sqrt{2}})) +4$ and hence
$\psi(\Q[\sqrt{2},i])$ is by lemma $\ref{free}$ conjugate to a subgroup of $G_x$ or $G_y$. Now the only thing remained to show is that $G_x$ and $G_y$ are
closed in $G_x * G_y$ then we know that also \[\psi(\C) = \psi(\overline{\Q[\sqrt{2},i]})\subset\overline{\psi(\Q[\sqrt{2},i])}\] is conjugate to a subgroup of
$G_x$ or $G_y$. To see that for instance $G_x$ is closed we take any converging sequence of x-shears $\Delta_{f_n} \rightarrow \eta=(\eta_1,\eta_2,\eta_3)$. So
we know that
$(z + f_n(x))_n$ converges point-wise hence $f_n(z)$ converges, say to $f(z)$. Now clearly $\eta_1(x,y,z)=x$ and $\eta_3(x,y,z)=z+f(x)$, since $\eta$ is
algebraic $f$ is
a polynomial and therefore $\eta=\Delta_f$ is an x-shear.
\end{proof}

\begin{theorem}[\cite{makar3}] \label{shearlnd}
 The LNDs of the Danielewski surface $D_p$ for $\mathrm{deg}(p)\geq 3$ are exactly the shear fields and the shear fields conjugated by compositions of shear
automorphisms.
\end{theorem}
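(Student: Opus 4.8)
The plan is to prove both inclusions, leaning on the structure theory of $\mathrm{SAut}_{\mathrm{alg}}(D_p)$ already in hand. For the easy inclusion I would first recall from Definition \ref{shear} that the shear fields $SF_i^x, SF_i^y$ are LNDs, and from Remark \ref{conj} that the conjugate of an LND by an algebraic automorphism is again an LND (its flow being the conjugate $\mathbb{C}^+$-action). As shear automorphisms are algebraic, it follows at once that every shear field conjugated by a composition of shear automorphisms is an LND.

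For the converse I would start from an arbitrary LND $\Theta$. By definition its flow $\psi^t$ is an algebraic $\mathbb{C}^+$-action, hence an algebraic one-parameter subgroup $\psi:\mathbb{C}\to\mathrm{Aut}_{\mathrm{alg}}(D_p)$; by Proposition \ref{propspecial} its image lies in $\mathrm{SAut}_{\mathrm{alg}}(D_p) = G_0 \cong G_x * G_y$. Thus $\psi$ is a smooth (indeed algebraic) one-parameter subgroup of the free product, and Lemma \ref{free} supplies an element $g\in G_x * G_y$ --- a composition of shear automorphisms --- for which $\tilde\psi^t := g^{-1}\psi^t g$ lies entirely in $G_x$ (the $G_y$ case being symmetric via the involution $I$).

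It then remains to identify the infinitesimal generators of algebraic one-parameter subgroups inside $G_x$. The map $\Delta_f\mapsto f$ identifies $G_x$ with $(\mathbb{C}[x],+)$, so such a subgroup is an algebraic homomorphism $\mathbb{C}\to\mathbb{C}[x]$; additivity forces linearity in $t$, giving $\tilde\psi^t = \Delta_{th}$ for a fixed $h\in\mathbb{C}[x]$. Differentiating $\Delta_{th}(x,y,z) = \big(x, p(z+txh(x))/x, z+txh(x)\big)$ at $t=0$ produces the generator $\tilde\Theta = h(x)\big(p'(z)\tfrac{\partial}{\partial y} + x\tfrac{\partial}{\partial z}\big) = \sum_i c_i SF_i^x$, a $\mathbb{C}$-linear combination of shear fields. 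Finally, using the conjugation rule of Remark \ref{conj}, the field $g^*\tilde\Theta$ has flow $g\tilde\psi^t g^{-1} = \psi^t$, so $\Theta = g^*\tilde\Theta$ is the shear field $\tilde\Theta$ conjugated by the composition of shear automorphisms $g$, as required.

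The hard work is really front-loaded into the two results already proved: Proposition \ref{propspecial}, that every algebraic $\mathbb{C}^+$-action lands in the shear subgroup, and Lemma \ref{free}, which reduces matters to a single factor of the free product. Granting these, the main step that still needs care is the identification above --- verifying that algebraicity together with the homomorphism property pins down $\tilde\psi^t = \Delta_{th}$, and then keeping the conjugation convention of Remark \ref{conj} straight so that conjugating the $\mathbb{C}^+$-action by $g$ corresponds precisely to conjugating the generating field. I would also flag the minor point that the generator is in general a combination $\sum_i c_i SF_i^x$ rather than a single $SF_i^x$, so \emph{shear field} is to be read in this slightly wider sense.
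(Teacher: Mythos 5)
Your proposal is correct and follows essentially the same route as the paper: its one-line proof likewise reduces the claim, via Proposition \ref{propspecial} and Lemma \ref{free}, to a one-parameter subgroup of $G_x$ or $G_y$, leaving implicit exactly the steps you spell out (the easy inclusion via Remark \ref{conj}, and the identification $\tilde\psi^t=\Delta_{th}$ with generator $h(x)\bigl(p'(z)\tfrac{\partial}{\partial y}+x\tfrac{\partial}{\partial z}\bigr)$). Your closing caveat is also apt: such a subgroup generates a linear combination $\sum_i c_i SF_i^x$ in general, so \emph{shear field} must indeed be read in this slightly wider sense, as the paper implicitly does.
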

\begin{proof}
 An algebraic $\C^+$-action $\psi: \C \rightarrow \mathrm{SAut}_{\mathrm{alg}}(D_p)$ is by Proposition $\ref{propspecial}$ and Lemma $\ref{free}$
conjugated to a one parameter subgroup in $G_x$ or $G_y$.
\end{proof}

\section{Lie Combinations of Shear Fields}
In this chapter we will understand which algebraic volume preserving vector fields of the Danielewski surface can be written as a Lie combination of the shear
fields $\ref{shear}$. The main tool for the description will be the 1-forms $i_\Theta\omega$ for volume preserving vector fields. Recall that the
\textit{interior product} $i_\Theta: \Omega^{k+1}(M) \rightarrow \Omega^{k}(M)$ is given by $i_\Theta\mu(\Theta_1, \ldots , \Theta_k) := \mu(\Theta,\Theta_1,
\ldots , \Theta_k)$. We will also use the \textit{Lie derivative} of a differential form $\mu$ with respect to a vector field $\Theta$, which is given by
$L_\Theta \mu = \frac{d}{dt}\psi^{t*}\mu\mid_{t=0}$ or the Cartan formula $L_\Theta \mu = (d\circ i_\Theta + i_\Theta\circ d)(\mu)$. The formula
$i_{[\Theta_1,\Theta_2]}\mu = L_{\Theta_1}(i_{\Theta_2}\mu) - i_{\Theta_2}(L_{\Theta_1}\mu)$ gives a link between the interior product and the Lie derivative.
Another useful formula $L_\Theta d\mu = dL_\Theta \mu$ is a direct consequence of the Cartan formula.

\subsection{The Lie algebra generated by shear fields is a proper subalgebra of $\mathrm{VF}_\mathrm{alg}^\omega(D_p)$} 
\smallskip\noindent
From now on we will use the one-one correspondence between algebraic volume preserving vector fields and polynomial functions modulo constants on $D_p$. For
every volume preserving vector field $\Theta$ holds $L_{\omega}(\Theta) = di_{\Theta}\omega+i_{\Theta}d\omega = 0$. Since $d\omega=0$ the 1-form
$i_{\Theta}\omega$ is closed and therefore exact (because $D_p$ is simply connected $\ref{sconn}$), hence when $\Theta$ is algebraic then $i_{\Theta}\omega =
df$ for some regular $f\in\C[D_p]$. This defines a bijection between algebraic volume preserving vector fields and polynomial functions modulo constants. 

This correspondence is in analogy to the correspondence between symplectic vector fields and Hamiltonian functions in symplectic geometry (on simply connected
symplectic manifolds). In other words we use the structure of Poisson algebra on the functions on the manifold. This analogy is using the facts that $\omega$
is closed and non-degenerate. If we consider higher dimensional manifolds (not surfaces) the correspondence will be between volume preserving vector fields and
$n-2$ forms, see \cite{KaKuvolume}.
The
following lemma gives the corresponding functions to the shear fields and hyperbolic vector fields.

\begin{lemma} \label{satzi}
 For $i\in\N_0$ holds:
\[ i_{SF_i^x}\omega = -\frac{dx^{i+1}}{i+1}, \quad i_{SF_i^y}\omega = \frac{dy^{i+1}}{i+1}, \quad i_{HF_{z^i}}\omega = \frac{dz^{i+1}}{i+1}.\]
\end{lemma}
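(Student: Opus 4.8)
The plan is to verify each identity by a direct computation in the two coordinate charts, exploiting the explicit expressions $\omega = \frac{dx}{x}\wedge dz$ (on $\{x\neq 0\}$) and $\omega = -\frac{dy}{y}\wedge dz$ (on $\{y\neq 0\}$) recorded at the start of Section 2. Since both sides of each claimed equality are global algebraic $1$-forms on $D_p$ and the locus $\{x\neq 0\}$ (resp. $\{y\neq 0\}$) is Zariski dense, it suffices to check the $x$-shear and hyperbolic identities on $\{x\neq 0\}$ and the $y$-shear identity on $\{y\neq 0\}$; agreement on a dense open set forces agreement everywhere.

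First I would rewrite each vector field in the chart basis. In the chart $(x,z)$ the coordinate functions are the ambient $x$ and $z$, so the chart components of a field $\Theta$ tangent to $D_p$ are simply $\Theta\cdot x$ and $\Theta\cdot z$. For $SF_i^x = p'(z)x^i\frac{\partial}{\partial y} + x^{i+1}\frac{\partial}{\partial z}$ this gives $SF_i^x\cdot x = 0$ and $SF_i^x\cdot z = x^{i+1}$, so that $SF_i^x = x^{i+1}\frac{\partial}{\partial z}$ in the chart; likewise $HF_{z^i} = z^i\big(x\frac{\partial}{\partial x} - y\frac{\partial}{\partial y}\big)$ satisfies $HF_{z^i}\cdot z = 0$ and $HF_{z^i}\cdot x = x z^i$, so $HF_{z^i} = x z^i\,\frac{\partial}{\partial x}$. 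In the chart $(y,z)$ one finds analogously $SF_i^y = y^{i+1}\frac{\partial}{\partial z}$. The key simplification is that tangency to $D_p$ makes one of the two chart components vanish in each case.

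Then the interior products reduce to the elementary contractions $i_{\partial/\partial z}(dx\wedge dz) = -dx$ and $i_{\partial/\partial x}(dx\wedge dz) = dz$ (and their analogues with $y$). Substituting, I get $i_{SF_i^x}\omega = x^{i+1}\cdot\frac{1}{x}\cdot(-dx) = -x^i\,dx$, and recognizing $-x^i\,dx = -\frac{d(x^{i+1})}{i+1}$ yields the first identity; the same bookkeeping gives $i_{SF_i^y}\omega = y^i\,dy = \frac{d(y^{i+1})}{i+1}$ and $i_{HF_{z^i}}\omega = z^i\,dz = \frac{d(z^{i+1})}{i+1}$. I expect no genuine obstacle here: the only point that needs care is the correct passage from the ambient description of the fields to their chart components (i.e.\ seeing that the $\frac{\partial}{\partial x}$-component of $SF_i^x$ and the $\frac{\partial}{\partial z}$-component of $HF_{z^i}$ vanish), together with the density remark that promotes the chart computation to a global identity of algebraic forms.
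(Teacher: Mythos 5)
Your proof is correct and takes essentially the same route as the paper's: both are direct computations in the charts using $\omega = \frac{dx}{x}\wedge dz$ (resp.\ $-\frac{dy}{y}\wedge dz$), and both hinge on the same key observation that $dx(SF_i^x)$, $dy(SF_i^y)$ and $dz(HF_{z^i})$ vanish. The paper merely phrases the contraction by pairing $\omega$ against an arbitrary test field $\Theta$ rather than rewriting the fields in the chart basis, and leaves the (routine) dense-open-set globalization implicit where you make it explicit.
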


\begin{proof}
\begin{eqnarray*}
 i_{SF_i^x}\omega(\Theta)&=&\omega(SF_i^x,\Theta)= \frac{1}{x}dx\wedge dz(SF_i^x,\Theta) \\&=&  \frac{1}{x}\Big(dx(SF_i^x)dz(\Theta)- dx(\Theta)dz(SF_i^x)\Big)
\\&=& \frac{1}{x}\Big(-x^{i+1}dx(\Theta)\Big) = - x^i dx(\Theta) = -\frac{dx^{i+1}}{i+1}(\Theta).\end{eqnarray*}
\begin{eqnarray*}
i_{SF_i^y}\omega(\Theta)&=&\omega(SF_i^y,\Theta)= -\frac{1}{y}dy\wedge dz(SF_i^y,\Theta) \\&=& -\frac{1}{y}\Big(dy(SF_i^y)dz(\Theta)- dy(\Theta)dz(SF_i^y)\Big)
\\&=& -\frac{1}{y}\Big(-y^{i+1}dy(\Theta)\Big) = y^i dy(\Theta) = \frac{dy^{i+1}}{i+1}(\Theta).
\end{eqnarray*}
\begin{eqnarray*}
i_{HF_{z^i}}\omega(\Theta)&=&\omega(HF_{z^i},\Theta)= \frac{1}{x}dx\wedge dz(HF_{z^i},\Theta) \\&=& \frac{1}{x}\Big(dx(HF_{z^i})dz(\Theta)-
dx(\Theta)dz(HF_{z^i})\Big) \\& = &\frac{1}{x}(z^i xdz)= z^i dz(\Theta) = \frac{dz^{i+1}}{i+1}(\Theta).
\end{eqnarray*}
\end{proof}
In general, it is not hard to see that for a given function $f$ the corresponding vector field $\Theta$ is given by
\[\Theta = \left(p'(z)f_y+ xf_z\right)\frac{\partial}{\partial x}- \left(p'(z)f_x + y f_z\right) \frac{\partial}{\partial y} + \left(yf_y -
xf_x\right)\frac{\partial}{\partial z},\]
where $f_x,f_y,f_z$ denote the partial derivatives of $f$.
We need to know how to calculate the Lie bracket on the level of functions. An easy calculation shows the following lemma.
\begin{lemma} \label{lemi}  Let $\Theta$ be a volume preserving vector field with $i_{\Theta}\omega = df$ and $\Psi$ another volume preserving vector field,
then \[i_{[\Psi,\Theta]}\omega = L_{\Psi}(i_{\Theta}\omega) - i_{\Theta}(L_{\Psi}(\omega)) = L_{\Psi}(df) = dL_{\Psi}(f).\]\end{lemma}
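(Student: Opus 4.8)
The plan is simply to chain together the three identities assembled in the paragraph immediately preceding the statement, feeding in the two hypotheses at the right moment. First I would apply the commutator--interior-product formula $i_{[\Theta_1,\Theta_2]}\mu = L_{\Theta_1}(i_{\Theta_2}\mu) - i_{\Theta_2}(L_{\Theta_1}\mu)$ with $\Theta_1 = \Psi$, $\Theta_2 = \Theta$ and $\mu = \omega$. This produces the first equality
\[ i_{[\Psi,\Theta]}\omega = L_{\Psi}(i_{\Theta}\omega) - i_{\Theta}(L_{\Psi}\omega) \]
verbatim, with no computation beyond substitution into the known formula.

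The only substantive point is the second equality, where the hypotheses are actually used. Since $\Psi$ is volume preserving, its flow $\psi^t$ fixes $\omega$, so $L_{\Psi}\omega = \frac{d}{dt}\psi^{t*}\omega\mid_{t=0} = 0$; equivalently, by Cartan's formula $L_{\Psi}\omega = d\,i_{\Psi}\omega + i_{\Psi}\,d\omega = 0$ because $\omega$ is closed. Hence the term $i_{\Theta}(L_{\Psi}\omega)$ vanishes, and substituting the second hypothesis $i_{\Theta}\omega = df$ into the surviving term gives $L_{\Psi}(i_{\Theta}\omega) = L_{\Psi}(df)$. For the final equality I would invoke the identity $L_{\Theta}\,d\mu = d\,L_{\Theta}\mu$ recorded above, applied to the $0$-form $f$, yielding $L_{\Psi}(df) = d\,L_{\Psi}(f)$; one may further note $L_{\Psi}f = \Psi(f)$ since $f$ is a function.

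I do not expect any genuine obstacle: the statement is an assembly of formulas already in hand, and the heart of it is recognizing that the middle term drops out precisely because $\Psi$ preserves $\omega$. The one thing worth stating with care is the vanishing $L_{\Psi}\omega = 0$, as this is exactly the defining property of a volume preserving field and is what produces the clean conclusion $i_{[\Psi,\Theta]}\omega = d\,L_{\Psi}(f)$. It is this clean form that makes the correspondence $\Theta \leftrightarrow f$ compatible with the Lie bracket, which is the structural property the later arguments will exploit.
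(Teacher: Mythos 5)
Your proof is correct and is exactly the ``easy calculation'' the paper has in mind: it chains the recalled commutator formula $i_{[\Theta_1,\Theta_2]}\mu = L_{\Theta_1}(i_{\Theta_2}\mu) - i_{\Theta_2}(L_{\Theta_1}\mu)$, the vanishing $L_{\Psi}\omega=0$ (the defining property of a volume preserving field), and the identity $L_{\Psi}\,d\mu = d\,L_{\Psi}\mu$, precisely the ingredients assembled in the paragraph preceding the lemma. The only nitpick is your parenthetical ``equivalently, by Cartan's formula $\ldots = 0$ because $\omega$ is closed'': closedness of $\omega$ alone kills only the term $i_{\Psi}\,d\omega$, while the vanishing of $d\,i_{\Psi}\omega$ is again the volume preserving hypothesis, so that remark is redundant rather than an independent justification; your primary argument via the flow is complete as stated.
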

This lemma also allows us the compute the Lie bracket only in terms of functions (which is usually called the Poisson bracket):
\[
 \lbrace f,g \rbrace = p'(z)(f_yg_x-f_xg_y) + x(f_zg_x - f_xg_z) -y(f_zg_y-f_yg_z),
\]
however we will never use this precise description.

The previous facts allow us to reprove the fact from \cite{KaKuvolume}   that $D_p$ has the volume density property.
\begin{theorem} \label{satzchar}
 The Danielewski surface $D_p$ with the volume form $\omega$ satisfies the algebraic volume density property, in fact every algebraic volume preserving vector
field is a Lie combination of shear fields and hyperbolic fields. Precisely: Every volume preserving vector field is a linear combination of vector fields
$SF_i^x$, $SF_i^y$, $HF_f$, $[SF_i^x,HF_f]$ and $[SF_i^y,HF_f]$ for $i\in\N_0$ and polynomials $f\in\C[z]$.
\end{theorem}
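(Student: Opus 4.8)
The plan is to exploit the $\C$-linear bijection, established just before the statement, between algebraic volume-preserving vector fields $\Theta$ and functions $f\in\C[D_p]$ modulo constants, fixed by the rule $i_\Theta\omega = df$. Because this correspondence is linear, it suffices to show that the functions attached to the listed generators $SF_i^x$, $SF_i^y$, $HF_f$, $[SF_i^x,HF_f]$, $[SF_i^y,HF_f]$ linearly span all of $\C[D_p]$ modulo constants. The normal form (1) tells us precisely which monomials must be reached: the mixed ones $x^i z^j$ and $y^i z^j$ with $i\geq 1$, $j\geq 0$, together with the pure powers $z^i$.

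First I would record the potentials of the basic generators. By Lemma \ref{satzi} the fields $SF_i^x$, $SF_i^y$ and $HF_{z^j}$ correspond, up to nonzero scalars, to $x^{i+1}$, $y^{i+1}$ and $z^{j+1}$, and by linearity $HF_f$ corresponds to the antiderivative $F$ of $f$. This already realizes every monomial $x^i$, $y^i$ with $i\geq 1$ and every pure power $z^i$ with $i\geq 1$; what is missing are the genuinely mixed monomials $x^i z^j$, $y^i z^j$ with $j\geq 1$.

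The key step is to compute the potential attached to the brackets. Applying Lemma \ref{lemi} with $\Psi = SF_i^x$ and $\Theta = HF_f\leftrightarrow F$ gives $i_{[SF_i^x,HF_f]}\omega = d\bigl(SF_i^x(F)\bigr)$, and since $SF_i^x = p'(z)x^i\partial_y + x^{i+1}\partial_z$ acts on the $z$-dependent function $F$ only through its $\partial_z$-term, one gets $SF_i^x(F) = x^{i+1}F'(z) = x^{i+1}f(z)$. Hence $[SF_i^x,HF_f]$ corresponds to $x^{i+1}f(z)$, and symmetrically $[SF_i^y,HF_f]$ corresponds to $y^{i+1}f(z)$. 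Choosing $f(z)=z^j$ then produces exactly the monomials $x^{i+1}z^j$ and $y^{i+1}z^j$ for all $i,j\geq 0$.

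Putting these together, the potentials of $HF_{z^j}$, $[SF_i^x,HF_{z^j}]$ and $[SF_i^y,HF_{z^j}]$ realize every monomial occurring in the normal form (1) (the constant term being irrelevant modulo constants), so their $\C$-linear span is all of $\C[D_p]$ modulo constants. By the linearity of $\Theta\mapsto f$ the corresponding vector fields then span $\VFAO(D_p)$, which gives both the explicit linear-combination statement and the algebraic volume density property. I do not anticipate a genuine obstacle: once Lemma \ref{lemi} converts a Lie bracket into the directional derivative of the potential, the argument collapses to the elementary fact that bracketing a shear field with a hyperbolic field multiplies the $z$-potential by the appropriate power of $x$ or $y$; the only care needed is to match the normal form (1) monomial by monomial.
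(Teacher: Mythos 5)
Your proposal is correct and takes essentially the same route as the paper's own proof: both reduce the statement, via the bijection $i_\Theta\omega = df$, to realizing the monomials of the normal form $(1)$, use Lemma \ref{satzi} for the pure powers $x^i$, $y^i$, $z^i$, and then apply Lemma \ref{lemi} to compute that $[SF_{i-1}^x,HF_{z^j}]$ and $[SF_{i-1}^y,HF_{z^j}]$ have potentials $x^i z^j$ and $y^i z^j$. Your observation that the bracket acts as the directional derivative $SF_i^x(F)=x^{i+1}f(z)$ of the hyperbolic field's potential is exactly the computation $dL_{SF_{i-1}^x}\bigl(\tfrac{z^{j+1}}{j+1}\bigr)=dx^i z^j$ carried out in the paper.
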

\begin{proof}
 We have to find a Lie combination $A$ of shear fields and hyperbolic fields for every polynomial function $f$ on $D_p$ such that $i_{A}\omega = df$ holds. It
is sufficient to find the corresponding Lie combination for the monomials $x^i$, $y^i$, $z^i$, $x^i z^j$ and $y^i z^j$ for all $i,j > 0$, but the first three
are already covered by lemma $\ref{satzi}$. The corresponding vector fields of the last two monomials are $[SF_{i-1}^x,HF_{z^j}]$ and $[SF_{i-1}^y,HF_{z^j}]$,
indeed:
\begin{equation*}
 i_{[SF_{i-1}^x,HF_{z^j}]}\omega =dL_{SF_{i-1}^x}\left(\frac{z^{j+1}}{j+1}\right) = d x^i \frac{1}{j+1}(j+1)z^j=dx^i z^j
\end{equation*}
 A similarly calculation shows $i_{[SF_{i-1}^y,HF_{z^j}]}\omega = dy^i z^j$. 
\end{proof}

Now we have developed the method  to show AVDP for the cases mentioned in the introduction.  Let $D$ be the quotient of $SL_2 (\C)$ by the normalizer of the
maximal torus.
Consider $G = SL_2 (\C)$ as a subvariety of $\C^4_{a_1,a_2,b_1,b_2}$  given by $a_1b_2-a_2b_1=1$,
i.e. matrices
$$A =  \left[ \begin{array}{rr}
a_1& a_2  \\
b_1 & b_2 \\
\end{array}  \right]$$ are elements of $G$.
Let $T\simeq \C^*$ be the torus consisting of the diagonal elements and $N$ be the normalizer of $T$
in $SL_2$. That is, $N/T \simeq \Z_2$ where the matrix
$$A_0= \left[
\begin{array}{rr}
0 & -1  \\
1 & 0 \\
\end{array}  \right] \in N$$ generates the nontrivial coset of $N/T$.

\begin{lemma} The variety $D=G/T$ is isomorphic to the hypersurface $xy=z^2-1$ in $\C^3_{x,y,z }$ such that the $\Z_2$-action
is given by $(x ,y,  z)\to (-x,-y,-z)$.
\end{lemma}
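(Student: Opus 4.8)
The plan is to present $D=G/T$ as the affine quotient $\Spec \C[G]^T$, where the torus $T\cong\C^*$ acts on $G=SL_2(\C)$ by right translation, and then to compute the invariant ring $\C[G]^T$ directly. Writing a general element of $T$ as $\operatorname{diag}(t,t^{-1})$, right multiplication sends $(a_1,a_2,b_1,b_2)$ to $(ta_1,\,t^{-1}a_2,\,tb_1,\,t^{-1}b_2)$, so the four coordinate functions carry the $T$-weights $(1,-1,1,-1)$. This action is free with closed orbits, so the set-theoretic quotient $G/T$ coincides with $\Spec\C[G]^T$, and it suffices to identify this invariant ring with $\C[x,y,z]/(xy-z^2+1)$. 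Since $T$ is reductive and the $SL_2$-relation $a_1b_2-a_2b_1-1$ is itself $T$-invariant, I would first determine the invariants of the ambient polynomial ring $\C[a_1,a_2,b_1,b_2]$ and only afterwards impose $\det=1$.

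For the weights $(1,-1,1,-1)$ the invariant subring of $\C[a_1,a_2,b_1,b_2]$ is generated by the four quadratic monomials
\[
 s=a_1a_2,\quad t=b_1b_2,\quad u=a_1b_2,\quad v=a_2b_1,
\]
subject to the single relation $st=uv$. Verifying this generation-and-relations statement (every weight-zero monomial factors into these four quadrics, and $st=uv$ is the only syzygy) together with the bookkeeping that forming $T$-invariants commutes with passing to the quotient by the $\det=1$ relation is the step I expect to be the \emph{main obstacle}; it is exactly where reductivity of $T$ is used. Granting it, I would impose $u-v=a_1b_2-a_2b_1=1$ and set
\[
 x=2s,\qquad y=2t,\qquad z=u+v,
\]
so that $xy=4st=4uv=(u+v)^2-(u-v)^2=z^2-1$. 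This produces the ring isomorphism $\C[G]^T\cong\C[x,y,z]/(xy-z^2+1)$ and hence the claimed isomorphism of varieties $D\cong\{xy=z^2-1\}$ in $\C^3_{x,y,z}$.

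It then remains to compute the $\Z_2$-action. The nontrivial coset of $N/T$ acts on $G/T$ on the right via $gT\mapsto gA_0T$, which descends to the quotient precisely because $A_0$ normalizes $T$. Explicitly, $A\mapsto AA_0$ sends $(a_1,a_2,b_1,b_2)$ to $(a_2,-a_1,b_2,-b_1)$, so that $s\mapsto -s$, $t\mapsto -t$, $u\mapsto -v$ and $v\mapsto -u$. Substituting into the formulas for $x,y,z$ yields $x\mapsto-x$, $y\mapsto-y$ and $z=u+v\mapsto -(u+v)=-z$, which is exactly the stated involution $(x,y,z)\mapsto(-x,-y,-z)$. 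Once the invariant computation of the previous paragraph is in place, this final verification and the change of coordinates are routine.
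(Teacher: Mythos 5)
Your proposal is correct and follows essentially the same route as the paper: both compute the ring of $T$-invariants of $\C[SL_2(\C)]$ as four quadratic monomials subject to one relation, impose the determinant condition $a_1b_2-a_2b_1=1$, finish with a linear change of coordinates, and read off the $\Z_2$-action from multiplication by $A_0$ on the generators. The only cosmetic difference is that you use the right-translation weights $(1,-1,1,-1)$ (generators $a_1a_2,\,b_1b_2,\,a_1b_2,\,a_2b_1$, landing directly on $xy=z^2-1$) whereas the paper lists the invariants $a_1b_1,\,a_2b_2,\,a_1b_2,\,a_2b_1$ and first obtains $xy=z(z+1)$; your write-up is in fact more careful about the points the paper leaves implicit (generation of the invariant ring, reductivity, and closed orbits).
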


\begin{proof} Note that the ring of $T$-invariant regular functions on $G$ is generated by $x=a_1b_1, y =a_2b_2, v=a_1b_2$, and $z=a_2b_1$
where $v=z+1$. Hence $X$ is isomorphic to the hypersurface $x y =z(z+1)$ in $\C^3_{x, y, z,}$. After a linear isomorphism of $\C^3$ we get the desired form. The
formula for
the $\Z_2$-action (induced by multiplication by $A_0$) is also a straightforward computation.
\end{proof}

\begin{definition} Let $X$ be an affine algebraic manifold equipped with an algebraic volume form $\omega$. Suppose a finite group $\Gamma$ acts freely and
algebraically
on $X$. We say that $X$ has the $\Gamma$-AVDP if the Lie algebra generated by  $\Gamma$-invariant completely integrable volume preserving algebraic vector
fields on $X$ is
equal to the Lie algebra of all  $\Gamma$-invariant  volume preserving algebraic vector fields on $X$.
\end{definition}

\begin{theorem} \label{z2}
The Danielewski  surface $D$ has  $\Z_2$-AVDP. 
\end{theorem}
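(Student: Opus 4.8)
The plan is to transport the whole problem to the language of functions via the correspondence $\Theta \leftrightarrow f$, $i_\Theta\omega = df$, set up before Lemma \ref{satzi} (legitimate since $D$ is simply connected by Proposition \ref{sconn}), and then to determine which functions correspond to $\Z_2$-invariant fields. Writing $\sigma(x,y,z)=(-x,-y,-z)$ for the involution, a one-line computation in the chart $\omega=\frac{dx}{x}\wedge dz$ gives $\sigma^*\omega=-\omega$. Hence, by Remark \ref{conj}, if $i_\Theta\omega=df$ then $i_{\sigma^*\Theta}\omega=-d(f\circ\sigma)$, so $\sigma^*\Theta=\Theta$ holds exactly when $f\circ\sigma=-f$ modulo constants. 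In terms of the normal form (1) this means that the $\Z_2$-invariant volume preserving algebraic vector fields correspond precisely to the \emph{odd} functions, i.e.\ $\C$-linear combinations of $x^iz^j$ and $y^iz^j$ with $i+j$ odd together with $z^i$ for $i$ odd. By Lemma \ref{satzi} the generators $SF_i^x$, $SF_i^y$, $HF_{z^i}$ correspond to $-\frac{x^{i+1}}{i+1}$, $\frac{y^{i+1}}{i+1}$, $\frac{z^{i+1}}{i+1}$, which are odd precisely when $i$ is even. Thus the completely integrable $\Z_2$-invariant fields at our disposal are $SF_{2k}^x$, $SF_{2k}^y$ and $HF_{z^{2m}}$, and since brackets of invariant fields are invariant, it remains only to show that their Lie algebra realizes \emph{every} odd function.

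The main tool is Lemma \ref{lemi}, which reads $[\Psi,\Theta]\leftrightarrow L_\Psi(f_\Theta)$ and so converts every Lie bracket into a directional derivative of the corresponding function. First I would dispose of the easy monomials. The odd powers $z^i$ come directly from $HF_{z^{i-1}}$ (with $i-1$ even). The monomials $x^iz^j$ with $i$ odd and $j$ even are obtained exactly as in the proof of Theorem \ref{satzchar}: there the field $[SF_{i-1}^x,HF_{z^j}]$ corresponds to $x^iz^j$, and now both factors $SF_{i-1}^x$ and $HF_{z^j}$ are $\Z_2$-invariant because $i-1$ and $j$ are both even; the case of $y^iz^j$ is symmetric.

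The crux is the remaining family $x^iz^j$ (and $y^iz^j$) with $i$ even and $j$ odd. Here the construction of Theorem \ref{satzchar} is useless, since $[SF_{i-1}^x,HF_{z^j}]$ would require the \emph{non}-invariant fields $SF_{i-1}^x$ (now $i-1$ odd) and $HF_{z^j}$ (now $j$ odd). The idea is to cross the two shear families. A short computation gives $[SF_{2k}^x,SF_0^y]\leftrightarrow L_{SF_{2k}^x}(y)=p'(z)x^{2k}=2zx^{2k}$, producing an \emph{even} power of $x$ times $z$ out of two $\Z_2$-invariant LNDs; this is the one step where the relation $xy=p(z)$ really enters. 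Calling $\Theta$ the resulting field (so $\Theta\leftrightarrow x^{2k}z$ up to scalar) and bracketing with the invariant hyperbolic field $HF_{z^{2m}}$ raises the $z$-degree by an even amount, $[HF_{z^{2m}},\Theta]\leftrightarrow L_{HF_{z^{2m}}}(x^{2k}z)=2k\,x^{2k}z^{2m+1}$. As $k\geq 1$ and $m\geq 0$ vary this yields all $x^{2k}z^{2m+1}$, i.e.\ exactly the missing family; the $y$-side is identical with $x$ and $y$ interchanged, using $[SF_{2k}^y,SF_0^x]$. Together with the easy cases this exhausts all odd functions, proving $\Z_2$-AVDP.

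I expect the only genuine obstacle to be precisely this last family $x^{\mathrm{even}}z^{\mathrm{odd}}$: one must first notice that the naive imitation of Theorem \ref{satzchar} violates the invariance constraint, and then find the remedy in the cross-bracket $[SF_{2k}^x,SF_0^y]$ together with a hyperbolic bracket to fix the $z$-degree. Everything else is a bookkeeping of parities once the dictionary between $\Z_2$-invariant fields and odd functions is established.
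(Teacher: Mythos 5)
Your proof is correct, and its skeleton coincides with the paper's own: the same dictionary between $\Z_2$-invariant volume preserving fields and odd functions (the paper phrases this as producing all anti-invariant monomials, since $\sigma^*\omega=-\omega$), the same invariant generators $SF_{2k}^x$, $SF_{2k}^y$, $HF_{z^{2m}}$, and the same crucial cross-bracket: the paper's base case is $i_{[SF_0^y,SF_{2k}^x]}\omega=-2\,d(zx^{2k})$, which is your $[SF_{2k}^x,SF_0^y]$ up to sign. Where you genuinely diverge is in how the remaining monomials $x^{2k}z^{2m+1}$ are produced. The paper runs an induction on the $z$-degree, bracketing repeatedly with the LND $SF_0^y$: there the computation $SF_0^y(z^ix^{j+1})=(2j+2+i)z^{i+1}x^j-iz^{i-1}x^j$ requires the substitution $yx^{j+1}=(z^2-1)x^j$ and needs the induction hypothesis to absorb the lower-order term $z^{i-1}x^j$. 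You instead raise the $z$-degree in one stroke by bracketing with the invariant hyperbolic field, $L_{HF_{z^{2m}}}(x^{2k}z)=2k\,x^{2k}z^{2m+1}$, with no correction term and no induction; this is shorter and cleaner. Two small remarks. First, contrary to your own comment, your argument never actually uses the relation $xy=p(z)$; what enters in your base case is only the identity $p'(z)=2z$, whereas the paper's induction step is the place where the relation is genuinely used a second time. Second, the paper's arrangement buys something that becomes visible only in the Remark following the theorem: since its iterated brackets always have an LND outermost (a hyperbolic field occurs at most once, innermost), the Lemma on $[\Theta,\Psi]$ with $\Theta$ an LND applies directly to show that the linear span of invariant completely integrable fields already suffices. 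In your bracket $[HF_{z^{2m}},[SF_{2k}^x,SF_0^y]]$ the hyperbolic field sits outermost, so that refinement would require an extra Jacobi-identity rearrangement, e.g. $[HF,[SF^x,SF^y]]=[SF^y,[SF^x,HF]]-[SF^x,[SF^y,HF]]$; for the theorem as stated, however, this makes no difference and your proof is complete.
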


\begin{proof}

We proceed as in the proof of the previous theorem, The volume form $\omega$ is $\Z_2$ anti-invariant, i.e., $\sigma^* \omega= - \omega$. Thus  using the
invariant globally integrable fields $SF_{2n}^x$,  $SF_{2n}^y$, $HF_{2n}$ $n\ge 0$ we have to produce all anti-invariant monomials $x^i$, $y^i$,  $z^i$ for odd
$i$ and $z^i x^j$, $z^i y^j$ for $i, j \ge 1$,  $i+j \ge 3$ and odd. The first three are again covered by  lemma $\ref{satzi}$ for even $ i$.

 For the other monomials we have to use the exact form of the the defining polynomial $p(z) = z^2-1$. We obtain the monomials $z^i x^j$ by induction on $i$. The
monomials $ z^i y^j$ are then obtained analogously.

Starting the induction with $i=1$ consider  $$i_{[SF_0^y, SF_{2k}^x]} \omega = d SF_0^y ( -\frac{x^{2k+1}}{2k+1} ) = d \left( (2z \frac{\partial}{\partial x} +
y \frac{\partial}{\partial z})  ( -\frac{x^{2k+1}}{2k+1} ) \right) =
- 2d z x^{2k}$$

Suppose by induction hypothesis that all monomials $z^m x^n$, $m+n$ odd  for $m\le i$ are obtained. To produce a monomial $ z^{i+1} x ^{j}$ use the Lie bracket
of $SF_0^y$
with the field corresponding to the monomial $ z^i x^{j+1}$ (which by induction hypothesis is obtained). We obtain the polynomial

$$  ( 2z \frac{\partial}{\partial x} +  y \frac{\partial}{\partial z}) (z^i x^{j+1}) = 2 z^{i+1} (j+1) x^j + i z^{i-1}  y x^{j+1} =$$ $$= 2 z^{i+1} (j+1) x^j +
i z^{i-1}  (z^2 -1)  x^{j} =
(2 j + 2 + i) z^{i+1} x^j - i z^{i-1} x^j     $$

The monomial $z^{i-1} x^j $ is already obtained by induction hypothesis, thus the induction step is completed.

We do not get constant functions, they are not needed since they correspond to the zero field.
\end{proof} 

In fact the use of Lie brackets is not necessary in  the previous theorem, one can show that linear span is enough.

\begin{remark} 
 The vector space (instead of Lie  algebra) spanned by   globally integrable $\Z_2$-invariant algebraic vector fields on $D$ is equal to all $\Z_2$ invariant
algebraic vector fields. Also the vector space spanned  by   globally integrable $\Z_2$-anti-invariant algebraic vector fields on $D$ is equal to all $\Z_2$
anti-invariant algebraic vector fields\footnote{  For the anti-invariant case one shows exactly as in the proof above  that all anti-invariant fields
are obtained as Lie brackets of one anti-invariant globally integrable field and invariant LNDs.}.
\end{remark}

This follows from the fact that in the above proof one uses Lie brackets of LNDs and maximally one other (hyperbolic) globally integrable field and the
following general fact
which holds on any affine algebraic manifold.

\begin{lemma}
If $\Theta$ is an LND and $\Psi$ a finite sum of globally integrable algebraic vector field, then the Lie bracket $[\Theta, \Psi]$ is contained in the span of
globally integrable algebraic vector fields.
In particular the vector space spanned by LNDs is equal to the Lie algebra generated by LNDs.
\end{lemma}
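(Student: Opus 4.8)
The plan is to exploit the defining feature of an LND $\Theta$, namely that its flow $\psi^t$ is an \emph{algebraic} $\C^+$-action, so that $t\mapsto\psi^t$ is polynomial in $t$. By bilinearity of the Lie bracket, $[\Theta,\Psi]=\sum_i[\Theta,\Psi_i]$ where $\Psi=\sum_i\Psi_i$ with each $\Psi_i$ globally integrable; since the span of globally integrable fields is closed under addition, it suffices to treat the case where $\Psi$ itself is a single globally integrable algebraic field. First I would recall the standard identification of the Lie bracket with the derivative of the conjugated family,
\[
[\Theta,\Psi]=\frac{d}{dt}\Big|_{t=0}(\psi^t)^*\Psi,
\]
where $(\psi^t)^*\Psi$ is the conjugation of Remark \ref{conj}; by that remark each $(\psi^t)^*\Psi$ is again globally integrable, because $\psi^t$ is an algebraic automorphism.

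The heart of the argument is that the family $t\mapsto(\psi^t)^*\Psi$ depends \emph{polynomially} on $t$. Indeed, writing $((\psi^t)^*\Psi)_p=(D\psi^{-t})\,\Psi_{\psi^t(p)}$, the point $\psi^t(p)$ is polynomial in $t$ (and regular in $p$) because $\psi$ is an algebraic $\C^+$-action; the components of $\Psi$ are regular functions, so $\Psi_{\psi^t(p)}$ is again polynomial in $t$; and the Jacobian $D\psi^{-t}$ is likewise polynomial in $t$. Hence there are algebraic vector fields $\Xi_0,\dots,\Xi_N$ with
\[
(\psi^t)^*\Psi=\sum_{k=0}^{N}t^k\,\Xi_k,
\]
the degree $N$ being finite and fixed since $\psi^t$ and $\Psi$ have bounded degree. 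Differentiating at $t=0$ gives $[\Theta,\Psi]=\Xi_1$.

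To finish the first statement I would use a Vandermonde/interpolation argument. Choosing $N+1$ distinct values $t_0,\dots,t_N\in\C$, the fields $F_j:=(\psi^{t_j})^*\Psi=\sum_k t_j^k\Xi_k$ are globally integrable, and invertibility of the Vandermonde matrix $(t_j^k)$ lets me solve for each $\Xi_k$ as a $\C$-linear combination of the $F_j$. In particular $[\Theta,\Psi]=\Xi_1\in\Span\{F_0,\dots,F_N\}$ lies in the span of globally integrable fields, as claimed.

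For the final assertion, note that when $\Psi$ is itself an LND the same computation yields more: by Remark \ref{conj} each conjugate $F_j=(\psi^{t_j})^*\Psi$ is again an LND, so the interpolation exhibits $[\Theta,\Psi]$ as a linear combination of LNDs. Writing $V$ for the vector space spanned by LNDs, bilinearity then gives $[\Theta,\Phi]\in V$ for all $\Theta,\Phi\in V$, so $V$ is a Lie subalgebra containing every LND; since $V$ is obviously contained in the Lie algebra generated by LNDs, the two coincide. The main obstacle, and the place where the LND hypothesis is indispensable, is the polynomiality in $t$: for a merely (holomorphically) complete field the flow would be holomorphic but not polynomial in $t$, the family $(\psi^t)^*\Psi$ would have no finite expansion, and the finite interpolation step would break down.
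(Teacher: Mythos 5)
Your proof is correct, and it shares its core with the paper's argument: both conjugate $\Psi$ by the flow $\psi^t$ of the LND $\Theta$, observe that each conjugate is again globally integrable (resp.\ an LND), and exploit that the family $t\mapsto(\psi^t)^*\Psi$ depends \emph{polynomially} on $t$ --- the one place where the LND hypothesis is essential, exactly as you note at the end. The differences lie in how this polynomiality is obtained and then used. The paper gets it from the terminating exponential series $(\psi^t)^*\Psi=\sum_k \frac{t^k}{k!}\,\mathrm{ad}_\Theta^k(\Psi)$ (the content of its Lemma \ref{lemclosed}), whereas you derive it directly from algebraicity of the action morphism $\C\times X\to X$; both justifications are valid. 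More substantially, the paper finishes \emph{analytically}: the family lies in a finite-dimensional subspace of $\AVF$, such subspaces are closed, and $[\Theta,\Psi]=\lim_{t\to 0}\frac{(\psi^t)^*\Psi-\Psi}{t}$ therefore lies in the span of the conjugates. You finish \emph{algebraically}, solving for the coefficient $\Xi_1=[\Theta,\Psi]$ by inverting a Vandermonde matrix at $N+1$ distinct times. Your route needs no topology on the space of vector fields and exhibits the bracket as an explicit finite linear combination of the fields $(\psi^{t_j})^*\Psi$; the paper's route is shorter once Lemma \ref{lemclosed} is in place and does not require tracking the degree $N$. A further point in your favour: you make the \emph{in particular} clause explicit --- when $\Psi$ is an LND the interpolants $(\psi^{t_j})^*\Psi$ are themselves LNDs by Remark \ref{conj}, so the span of LNDs is closed under brackets and hence is the Lie algebra they generate --- a step the paper's proof leaves implicit, since its stated first claim (about globally integrable fields) does not by itself yield closure of the span of LNDs under brackets.
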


\begin{proof}
Let $\phi_t$ denote the flow of $\Theta$ (which is an algebraic $\C$-action). Then the set  $A=\{(\phi_t)^* (\Psi) \} $ is contained in a finite dimensional
subspace of $\AVF$ and thus its span is closed (see Lemma \ref{lemclosed}). Since global integrability is preserved when applying an automorphisms, all fields
in $A$ are in the span of
globally integrable fields. Moreover the definition $$[\Theta, \Psi] = lim_{t\to 0} \frac{(\phi_t)^* (\Psi) -\Psi}{t}$$ shows that the bracket  $[\Theta, \Psi]$
is in the closure of the span of $A$, thus in the span.
\end{proof}

Now the other example: Let $X= D \times \C^*$ equipped with the volume form $\omega_0 = \omega \times \frac{ d \theta}{ \theta }$ and $\Z_2$-action generated by
$(x, y, z, \theta) \mapsto (-x, -y, -z, -\theta)$. The next theorem states that $X$ has $\Z_2$-AVDP, the proof technique is very close the technique we have
seen above. For a vector field $\Theta$ we again look at the corresponding form $i_\Theta\omega_0$ which is in this situation an anti-invariant closed 2-form.
In order to find all those forms we need to find all anti-invariant exact 2-forms and additionally for each cohomology class one representative.

\begin{theorem}[\cite{KaKuvolume}] \label{z3}
The manifold $X$ has  $\Z_2$-AVDP. 
\end{theorem}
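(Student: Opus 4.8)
The plan is to run the same duality between volume preserving fields and differential forms as in the surface case, but now on the $3$-fold $X$, where the relevant object is a closed $2$-form rather than a closed $1$-form. Contraction with the nowhere vanishing $3$-form $\omega_0$ gives a linear isomorphism $\Theta\mapsto i_\Theta\omega_0$ between algebraic vector fields and algebraic $2$-forms; by Cartan's formula $L_\Theta\omega_0=d\,i_\Theta\omega_0$, so $\Theta$ is volume preserving exactly when $i_\Theta\omega_0$ is closed. Since $\sigma^*\omega_0=-\omega_0$ (because $\sigma^*\omega=-\omega$ on $D$ while $\sigma^*\tfrac{d\theta}{\theta}=\tfrac{d\theta}{\theta}$), a $\Z_2$-invariant field corresponds to a $\Z_2$-anti-invariant closed $2$-form. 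Exactly as in Lemma \ref{lemi}, for two volume preserving fields $\Psi,\Theta$ one has $i_{[\Psi,\Theta]}\omega_0=L_\Psi(i_\Theta\omega_0)=d\,i_\Psi i_\Theta\omega_0$, so Lie brackets of invariant volume preserving fields produce precisely exact anti-invariant $2$-forms. Hence the task splits in two: produce one completely integrable invariant field for a generator of each anti-invariant class in $H^2(X)$, and produce all anti-invariant exact $2$-forms as such brackets.

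For the cohomology, I would first compute $H^2(X,\C)$. Since $\deg p=2$, Proposition \ref{sconn} gives $H^2(D,\C)\cong\C$ and $H^1(D,\C)=0$, so by Künneth $H^2(X,\C)\cong H^2(D,\C)\cong\C$, with $\sigma$ acting by $-1$; thus the whole group is anti-invariant and a single representative suffices. The natural candidate is the Euler field $\theta\tfrac{\partial}{\partial\theta}$ on the $\C^*$-factor, which is $\sigma$-invariant and complete. A direct contraction gives $i_{\theta\partial_\theta}\omega_0=\omega$, the pullback of the volume form of $D$; this $2$-form is closed (so $\theta\tfrac{\partial}{\partial\theta}$ is volume preserving) and its class generates $H^2(D,\C)$ and hence $H^2(X,\C)$. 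This single completely integrable invariant field handles the cohomological part.

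It then remains to realise every anti-invariant \emph{exact} algebraic $2$-form as a sum of brackets $i_{[\Psi,\Theta]}\omega_0=d\,i_\Psi i_\Theta\omega_0$. Here I would write a general $2$-form as $\sum_k\theta^k\beta_k+\sum_k\theta^k\alpha_k\wedge\tfrac{d\theta}{\theta}$ with $\beta_k\in\Omega^2(D)$ and $\alpha_k\in\Omega^1(D)$; closedness forces $k\beta_k=-d_D\alpha_k$, so in each $\theta$-degree the data are governed by forms on $D$. The supply of invariant completely integrable fields is ample: the LNDs $\theta^m SF_i^x,\theta^m SF_i^y$ and the hyperbolic fields $\theta^m HF_{z^j}$ (which are $\sigma$-invariant precisely for the appropriate parities of $m,i,j$ $\bmod 2$), together with the fields $f\,\theta\tfrac{\partial}{\partial\theta}$ for invariant $f\in\C[D]$, whose associated forms are $f\omega$. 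Using Lemma \ref{lemi} I would then run, $\theta$-degree by $\theta$-degree, an induction on monomials mirroring the proofs of Theorem \ref{satzchar} and Theorem \ref{z2}, obtaining each new monomial from an already-produced one by a single bracket with an invariant LND (as in the step $z^i x^{j+1}\mapsto z^{i+1}x^j$ there).

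The hard part will be the combinatorial bookkeeping: one must track simultaneously (i) the parity in $(x,y,z)$ and in $\theta$ required for $\Z_2$-anti-invariance, (ii) the coupling $k\beta_k=-d_D\alpha_k$ imposed by closedness, which links the $\beta_k$-part (produced by the $f\,\theta\tfrac{\partial}{\partial\theta}$ fields) to the $\alpha_k$-part (produced by the $D$-shear and hyperbolic fields), and (iii) the distinction between the bracket-generated exact forms and the cohomology representative already supplied by $\theta\tfrac{\partial}{\partial\theta}$. Once the inductive step is arranged so that these constraints are respected degree by degree, the argument closes and yields the $\Z_2$-AVDP of $X$.
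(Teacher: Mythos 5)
Your setup coincides step for step with the paper's own proof: the correspondence $\Theta\mapsto i_\Theta\omega_0$ between invariant volume preserving fields and anti-invariant closed $2$-forms, the bracket identity $i_{[\Psi,\Theta]}\omega_0=d\,i_\Psi i_\Theta\omega_0$, and the reduction of the cohomological part to the single field $\theta\partial/\partial\theta$ via $H^2(X,\C)\cong\C$ with generator $\omega=i_{\theta\partial/\partial\theta}\omega_0$ are all exactly what the paper does, and these steps are correct. The problem is that everything after this reduction --- which is the entire content of the theorem --- is deferred to ``combinatorial bookkeeping,'' and with the supply of fields you list this bookkeeping is \emph{not} a routine mirror of Theorems \ref{satzchar} and \ref{z2}. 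Single brackets of your generators yield only the following potentials: $j(X)\theta^n d\theta$ (from two $\theta$-scaled fields tangent to $D$), $f x^i\theta^m dx$ with $f$ invariant (from $[\theta^m SF^x_i,\,f\theta\partial/\partial\theta]$), and analogously $f y^i\theta^m dy$ and $f z^j\theta^m dz$. For odd $m$ this misses, for instance, the anti-invariant potential $x\theta\,dz$ (equivalently $z\theta\,dx$ modulo closed forms and $d\theta$-type forms): the coefficient $x$ does not lie in $z\,\C[D]^{\mathrm{inv}}$ because $x$ does not vanish on the curve $\{z=0\}\subset D$, and likewise $z\notin x\,\C[D]^{\mathrm{inv}}$. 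This is exactly the point where the paper introduces the additional invariant completely integrable fields $V=x\theta(x\partial/\partial x-y\partial/\partial y-\theta\partial/\partial\theta)$ and $W=y\theta(x\partial/\partial x-y\partial/\partial y+\theta\partial/\partial\theta)$, neither of which is on your list; note that $i_V\omega_0=-d(x\theta\,dz)$, so $V$ by itself supplies precisely such a missing form.

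Your supply does in fact turn out to be sufficient, but seeing this requires an argument you have not given, and it is not a degree-by-degree induction of the earlier type: one must exploit the defining relation $xy=z^2-1$ together with closed-form corrections to reassemble the missing potentials from the restricted ones, for example
\[
d(x\theta\,dz)=d\bigl(xyz\,\theta\,dx\bigr)+d\bigl(3xz^2\,\theta\,dz\bigr)+d\bigl(xz(z^2-1)\,d\theta\bigr),
\]
where each summand on the right is of a type your brackets do produce (on $D$ one has $xyz=z^3-z$, so the first is $fx\theta\,dx$ with $f=yz$ invariant; the second is $fz\theta\,dz$ with $f=xz$ invariant; the third is a $d\theta$-type form with invariant nonconstant coefficient, obtained in your first stage). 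Without either such identities or the paper's extra fields $V$ and $W$, the inductive step you appeal to does not close; as written, the proof of the only hard part of the theorem is missing.
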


\begin{proof}
The volume form $\omega_0$ is anti-invariant. We wish to find all  anti-invariant closed 2-forms $\alpha$ on $X$ as $i_\chi \omega_0$ where $\chi$ is a Lie
combination of
invariant completely integrable fields on $X$. By Proposition \ref{sconn}   $H^2 (D, \C) = \C$ and it is easy to check that the volume form $\omega$ represents
the nontrivial class.
By K\"unneth formula and $H^1(D,\C)=0$ we have that $H^2 (X, \C)$ is isomorphic to $\C$ and $\omega$ (considered as a 2 -form on $X$) is a generator. Remark
that $ \omega = i_{\theta
\frac{\partial}{\partial \theta}}
\omega_0$. Thus subtracting the completely integrable
volume preserving invariant field $ \theta \frac{\partial}{\partial \theta} $ from a given field $\chi$ we can assume that the form $\alpha$ is exact. It
remains  to construct all anti-invariant $1$-forms $\beta$ in the expression $ d\beta = i_\chi \omega_0$ where $\chi$ is a Lie combination of invariant
completely integrable fields on $X$.
Of course we have to find all $1$-forms $\beta$ up to closed ones, since these correspond to the zero vector field.

Since the restrictions of the 1-forms $dx$, $dy$ and $dz$ from $\C^3$ to the tangent space of $D$ generate the cotangent space of $D$ at any point,  all
$1$-forms on $X$ can be written as 
$$\beta = \sum_{n = -N}^{N}  f_n (X) \theta^n dx  +   \sum_{n = -N}^{N}  g_n (X) \theta^n dy +  \sum_{n = -N}^{N}  h_n (X) \theta^n dz +  \sum_{n = -N}^{N}  j_n
(X) \theta^n d \theta$$

where $X = (x, y, z)$ and $ f_n, g_n, h_n, j_n$ are regular functions on $D$ which are invariant if $n$ is even and anti-invariant if $n$ is odd. Of course this
representation of a 1-form on $X$  is not unique, the relation $x dy + y dx = 2z dz$ holds, but this is irrelevant for our proof. 

We begin by  constructing all summands of the fourth sum. First consider the case of even $n$. The proof is analogous to the proof of the preceding theorem. The
monomial forms
$x^i \theta^n d \theta $, $i$ even,  you construct by inner product of the invariant completely integrable field $\theta^{n+1}  SF_{i-1}^x$ with $\omega_0$,
$y^i \theta^n d \theta$
comes from  $\theta^{n+1}  SF_{i-1}^y$  $i$ even, and  $z^i \theta^n d \theta$ comes from the invariant field $\theta^{n+1}  HF_{i-1}$  $i$ even. Now use
inductively  Lie brackets
with  the invariant field  $SF_0^y$ to obtain out of the form $x^i \theta^n d \theta$ the forms $x^{i-1} z \theta^n d \theta$, $x^{i-2} z^2 \theta^n d \theta$
and so on thus obtaining all
$1$- forms $x^k z^l \theta^n d \theta$ for $k+l$ even. The forms $y^k z^l \theta^n d \theta$, $k+l$ even, are obtained analogously. 
Now consider the case $n$ odd. Start with the monomial forms $x^i \theta^n d \theta $, $i$ odd,  you constructed by inner product of the invariant completely
integrable field $\theta^{n+1}  SF_{i-1}^x$ with $\omega_0$, all the rest goes analogously. We thus have constructed all anti-invariant $1$- forms $\sum_{n =
-N}^{N}  j_n (X) \theta^n d \theta$, except for
$j_n = constant$, but the forms $\theta^n d \theta$ are closed and therefore corresponding to the zero field. 

In order to produce the summand in the first sum we introduce the invariant globally integrable volume preserving vector field $V=x\theta(x\partial/\partial x
-y\partial/\partial y -\theta\partial/\partial\theta)$ and take the Lie bracket with the vector field corresponding to the 1-form $f(X)\theta^nd\theta$ (say
$n$ even and $f$ invariant). This produces the 1-form
\[
 L_V(f(X)\theta^nd\theta)= V(f(X)\theta^n)d\theta -f(X)\theta^nd(x\theta^2) = (\ldots)d\theta - f(X)\theta^{n+2}dx
\]
and therefore we get together with the above all 1-forms of the form $f(X)\theta^n dx$ where $n$ is even and $f$ invariant and similarly the ones with $n$ odd
and $f$ anti-invariant. In the identical way we get all 1-forms $f(X)\theta^n dy$ by taking the invariant vector field $W=y\theta(x\partial/\partial x
-y\partial/\partial y + \theta\partial/\partial\theta)$ instead. The invariant vector fields $xz\theta\partial/\partial\theta$,
$yz\theta\partial/\partial\theta$ and $z^2\theta\partial/\partial\theta$ will help to construct all forms of the form $f(X)\theta^ndz$. Indeed the calculations
\[
 L_{xz\theta\frac{\partial}{\partial \theta}}(f(X)\theta^nd\theta)=(\ldots)d\theta + (\ldots)dx+ xf(X)\theta^{n+1}dz,
\]
\[
 L_{yz\theta\frac{\partial}{\partial \theta}}(f(X)\theta^nd\theta)=(\ldots)d\theta + (\ldots)dy+ yf(X)\theta^{n+1}dz,
\]
\[
 L_{z^2\theta\frac{\partial}{\partial \theta}}(f(X)\theta^nd\theta)=(\ldots)d\theta + 2zf(X)\theta^{n+1}dz,
\]
show that we get all 1-forms of the form $g(X)\theta^ndz$ where $g(X)$ is either a multiple of $x$, $y$ or $z$. Hence allowing linear combinations only
the constant term $\theta^ndz$ is missing. But since the form $\theta^n dz + n\theta^{n-1}zd\theta$ is closed the corresponding vector field also corresponds to
$-n\theta^{n-1}zd\theta$ and hence is already obtained. 
\end{proof}

The question  we like to investigate in the remaining part of the paper is, whether the hyperbolic vector fields are needed  in the proof of Theorem
\ref{satzchar} or if the shear fields could be enough. In the following section it is shown that the Lie algebra generated by the shear fields doesn't contain
all the hyperbolic fields. Here are some preliminaries.
The proof  of the first fact is an easy consequence of the Jacobi identity.
\begin{lemma} \label{lemschachtelung}
 Let $M$ be a set of vector fields, then the Lie algebra $\mathrm{Lie}(M)$ generated by $M$ is spanned (as a vector space) by elements of the form \linebreak
$[A_n,[..[A_2,[A_1,A_0]]..]]$ with $ A_i\in M$.
\end{lemma}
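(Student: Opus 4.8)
The plan is to show that the vector space $W$ spanned by the left-normed brackets $[A_n,[..[A_2,[A_1,A_0]]..]]$ is itself a Lie subalgebra, where I adopt the convention that the degenerate case $n=0$ is a single generator $A_0\in M$, so that $M\subseteq W$. Since each such bracket is manifestly an iterated bracket of elements of $M$, we have $W\subseteq\mathrm{Lie}(M)$; conversely, once $W$ is known to be closed under the bracket, the minimality of $\mathrm{Lie}(M)$ among Lie algebras containing $M$ forces $W=\mathrm{Lie}(M)$. Everything therefore reduces to proving $[u,v]\in W$ whenever $u,v\in W$, and by bilinearity it suffices to treat the case where $u$ and $v$ are individual left-normed brackets.

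The core is an induction on the length $q$ of the right-hand factor, proving the statement: for every $x\in W$ and every left-normed bracket $e$ of length $q\ge 1$ one has $[x,e]\in W$. In the base case $q=1$ we have $e=A\in M$, and for a left-normed bracket $x$ the element $[x,A]=-[A,x]$ lies in $W$, because prepending the generator $A$ to $x$ produces again a left-normed bracket; bilinearity extends this to all $x\in W$. For the inductive step write $e=[A,e']$ with $A\in M$ and $e'$ left-normed of length $q-1$, and apply the Jacobi identity in the form
\[
[x,[A,e']]=[[x,A],e']+[A,[x,e']].
\]
The first summand falls under the inductive hypothesis, since $[x,A]\in W$ by the base case while $e'$ has length $q-1$; the second summand is $[A,z]$ with $z=[x,e']\in W$ again by the inductive hypothesis, and $[A,z]$ lies in $W$ because prepending $A$ to each left-normed constituent of $z$ keeps one left-normed. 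Hence $[x,e]\in W$, completing the induction.

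The induction thus yields $[x,e]\in W$ for all $x\in W$ and all left-normed $e$, and a final appeal to bilinearity in the second slot gives $[u,v]\in W$ for arbitrary $u,v\in W$, so $W$ is a subalgebra and the proof is finished. The only point that needs care---and what I would regard as the main (mild) obstacle---is the bookkeeping of the nesting convention: one must fix once and for all that a left-normed bracket is built by repeatedly bracketing a new generator \emph{on the left}, so that both moves used above, namely prepending a generator and the reshuffling produced by Jacobi, visibly keep one inside $W$. No input beyond the antisymmetry and the Jacobi identity of the Lie bracket of vector fields is required.
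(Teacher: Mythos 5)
Your proof is correct, and it is essentially the argument the paper has in mind: the paper gives no details, remarking only that the lemma ``is an easy consequence of the Jacobi identity,'' and your induction on the length of the right-hand factor, driven by the identity $[x,[A,e']]=[[x,A],e']+[A,[x,e']]$, is the standard way to flesh out exactly that remark. The bookkeeping (including elements of $M$ as the degenerate case so that $M\subseteq W$, and the minimality argument) is handled correctly.
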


In order to study which polynomials correspond to Lie combinations of shear fields it is therefore necessary to study functions of the type
$i_{[A_n,[..[A_2,[A_1,A_0]]..]]}\omega$ where $A_i$ are shear fields.
\begin{lemma}  \label{lempolynom}
 Let $A_i$ be shear fields for $0\leqslant i\leqslant n$, then the polynomial $f$ with $i_{[A_n,[..[A_2,[A_1,A_0]]..]]}\omega = df$ is of type (a) $x^j q(z)$,
(b) $y^j q(z)$ or (c) $q(z)$ for some $j>0$ and some polynomial $q\in\C[z]$.
\end{lemma}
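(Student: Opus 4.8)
The plan is to translate the statement about vector fields into one about their corresponding functions and then argue by induction on the nesting depth $n$. By Lemma \ref{lemi}, if $\Theta$ has corresponding function $f$ (meaning $i_\Theta\omega = df$) and $\Psi$ is volume preserving, then the function attached to $[\Psi,\Theta]$ is $L_\Psi(f)=\Psi(f)$, the derivative of $f$ along $\Psi$. Reading the nested bracket from the inside out, the function attached to $[A_n,[\ldots[A_1,A_0]\ldots]]$ is therefore the iterated directional derivative $A_n(A_{n-1}(\cdots A_1(f_0)\cdots))$, where $f_0$ is the function of the innermost field $A_0$. By Lemma \ref{satzi} this $f_0$ is of type (a) or (b). Thus it suffices to prove the following closure property: \emph{each shear field, viewed as a derivation of $\C[D_p]$, sends a function of type (a), (b) or (c) to a function of type (a), (b) or (c)}; the lemma then follows by induction.

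To verify the closure property I would compute directly, using that as a derivation $SF_i^x$ is determined by $SF_i^x(x)=0$, $SF_i^x(y)=p'(z)x^i$ and $SF_i^x(z)=x^{i+1}$ (one checks $SF_i^x(xy)=p'(z)x^{i+1}=SF_i^x(p(z))$, so this is well defined on $D_p$). On types (a) and (c) the computation is immediate: $SF_i^x(x^j q(z))=x^{i+j+1}q'(z)$ and $SF_i^x(q(z))=x^{i+1}q'(z)$, both of type (a). The only interesting case is type (b), where the Leibniz rule gives
\[
SF_i^x\big(y^j q(z)\big) = j\,q(z)p'(z)\,x^i y^{j-1} + q'(z)\,x^{i+1}y^j .
\]
Here each mixed monomial must be rewritten using the defining relation $xy=p(z)$, which collapses any product $x^a y^b$ to a single power of $x$ or of $y$ times a polynomial in $z$.

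The heart of the matter is this rewriting, governed by a short case analysis comparing $j$ with $i+1$. When $j\le i$ both monomials carry a surviving positive power $x^{i+1-j}$, so the output is $x^{i+1-j}p(z)^{j-1}\big(jp'(z)q(z)+p(z)q'(z)\big)$, of type (a). When $j=i+1$ both monomials reduce to pure powers of $p(z)$ and the output is a polynomial in $z$ alone, of type (c). When $j\ge i+2$ a positive power $y^{j-i-1}$ survives in both terms and the result is of type (b). By the symmetry $x\leftrightarrow y$ (interchanging $SF_i^x$ with $SF_i^y$ and types (a) and (b)) the same holds for $SF_i^y$, which establishes the closure property and hence the lemma.

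The step I expect to require the most care is this case analysis: one must track the two monomials $x^iy^{j-1}$ and $x^{i+1}y^j$ simultaneously and check that, after applying $xy=p(z)$, their surviving pure powers of $x$ (or of $y$) \emph{coincide}, so the two terms collect into a single function of one of the three types rather than a genuine sum of different types. The fact that the exponents line up exactly (both giving $x^{i+1-j}$, or both giving $y^{j-i-1}$) is precisely what forces the output into a single one of the classes (a), (b), (c).
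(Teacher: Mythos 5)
Your proposal is correct and follows essentially the same route as the paper: both translate the nested bracket into iterated Lie derivatives of the function $f_0$ via Lemma \ref{lemi}, induct on the nesting depth with Lemma \ref{satzi} as the base case, and reduce everything to the same case analysis in which the mixed term is collapsed using $xy=p(z)$, the type of the output being decided by comparing the two exponents (your trichotomy $j\le i$, $j=i+1$, $j\ge i+2$ is the paper's $k<j-1$, $k=j-1$, $k>j-1$ with the roles of $x$ and $y$ interchanged). The only cosmetic difference is that you phrase the inductive step as a closure property of the shear fields acting as derivations, while the paper writes the same computation as $dL_{SF_k}(f)$.
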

\begin{proof}
 For $n=0$ the claim holds due theorem $\ref{satzi}$. \newline \underline{(a)} If $f=x^j q(z)$, lemma $\ref{lemi}$ shows
\begin{equation*}                                                                                                i_{[SF_k^x,[A_n[..[A_2,[A_1,A_0]]..]]}\omega=
dL_{SF_k^x}(x^j q(z))= dx^{j+k+1}q'(z),
\end{equation*}
hence the polynomial is again of the type (a). Furthermore:
\begin{eqnarray*}
i_{[SF_k^y,[A_n[..[A_2,[A_1,A_0]]..]]}\omega&=& dL_{SF_k^y}(x^j q(z))\\&=& dp'(z)y^k jx^{j-1}q(z) + y^{k+1}x^jq'(z)\\
 &=& dy^k x^{j-1}(jp'(z)q(z)+xyq'(z))\\ &=& dy^k x^{j-1}(jp'(z)q(z)+p(z)q'(z)).                                   \end{eqnarray*}
After substituting $xy=p(z)$ this polynomial is also of the type (a),(b) or (c), depending weather $k<j-1$, $k>j-1$  or $k=j-1$. Similarly it holds
that:\newline \underline{(b)} If $f=y^j q(z)$ then
\begin{eqnarray*}
 i_{[SF_k^x,[A_n[..[A_2,[A_1,A_0]]..]]}\omega&=& dx^k y^{j-1}(jp'(z)q(z)+p(z)q'(z)),\\
i_{[SF_k^y,[A_n[..[A_2,[A_1,A_0]]..]]}\omega&=& dy^{j+k+1}q'(z).
\end{eqnarray*}
\underline{(c)} If $f=q(z)$ then
\begin{eqnarray*}
 i_{[SF_k^x,[A_n[..[A_2,[A_1,A_0]]..]]}\omega&=&dx^{k+1}q'(z),\\
i_{[SF_k^y,[A_n[..[A_2,[A_1,A_0]]..]]}\omega&=& dy^{k+1}q'(z).
\end{eqnarray*}
\end{proof}
\begin{lemma}\label{f}
 If the case (c) in lemma $\ref{lempolynom}$ occurs, that is $i_{[A_n,[..[A_2,[A_1,A_0]]..]]}\omega = df$ for $A_i$ shear fields, and in addition $f=f(z)$ for
some polynomial in $z$, then $f(z)=(p(z)q(z))'$ for some polynomial $q$ in $z$.
\end{lemma}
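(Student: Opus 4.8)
The plan is to reduce everything to the single last bracket in the nested expression and then invoke the explicit formulas recorded in the proof of Lemma \ref{lempolynom}. First I would note that if case (c) occurs then $n\geq 1$: the base case $n=0$ is, by Lemma \ref{satzi}, always of type (a) or (b) (the function is $-x^{i+1}/(i+1)$ or $y^{i+1}/(i+1)$ with $i+1\geq 1$), never of type (c). So there is a genuinely last bracket, applied to the penultimate field $[A_{n-1},[\ldots[A_1,A_0]\ldots]]$, whose associated function is of type (a), (b) or (c) by Lemma \ref{lempolynom}. Inspecting the case-(c) formulas in that proof shows that bracketing a type-(c) function with any shear field yields a type-(a) or type-(b) function, so the final function cannot be of type (c) unless the penultimate one is of type (a) or (b), with the outcome landing in the borderline subcase of the case-(a)/(b) computation.

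Next I would record exactly what that borderline subcase gives. Suppose the penultimate function is of type (a), say $x^j\tilde q(z)$ with $j\geq 1$. According to the case-(a) computation, bracketing with $SF_k^y$ produces the function $y^k x^{j-1}\bigl(j\,p'(z)\tilde q(z)+p(z)\tilde q'(z)\bigr)$, and this collapses to a function of $z$ alone precisely when $k=j-1$. Substituting $xy=p(z)$ then yields
\[
 f(z)=(xy)^{j-1}\bigl(j\,p'(z)\tilde q(z)+p(z)\tilde q'(z)\bigr)=p(z)^{j-1}\bigl(j\,p'(z)\tilde q(z)+p(z)\tilde q'(z)\bigr).
\]
The symmetric route, starting from a type-(b) function $y^j\tilde q(z)$ and bracketing with $SF_{j-1}^x$, produces the identical expression, so both possibilities lead to the same formula for $f$.

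The key observation, which is really the entire content of the lemma, is that this expression is an exact derivative. By the product and chain rules one has
\[
 \frac{d}{dz}\bigl(p(z)^j\tilde q(z)\bigr)=j\,p(z)^{j-1}p'(z)\tilde q(z)+p(z)^j\tilde q'(z)=p(z)^{j-1}\bigl(j\,p'(z)\tilde q(z)+p(z)\tilde q'(z)\bigr)=f(z).
\]
Hence $f=(p^j\tilde q)'=\bigl(p\cdot(p^{j-1}\tilde q)\bigr)'$, and setting $q:=p^{j-1}\tilde q$, which is a polynomial because $j\geq 1$, gives $f=(pq)'$ as claimed. I do not expect a genuine obstacle: once the last bracket is pinned down through Lemma \ref{lempolynom}, the conclusion is immediate upon recognizing the antiderivative $p^j\tilde q$. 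The only point demanding a little care is the bookkeeping that \emph{every} way of arriving at case (c) passes through this one borderline subcase, which is exactly what the first paragraph establishes.
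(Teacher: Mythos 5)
Your proof is correct and follows essentially the same route as the paper's: both isolate the last bracket, use the explicit case-(a)/(b)/(c) formulas from Lemma \ref{lempolynom} to force the borderline subcase $k=j-1$, and then recognize $p^{j-1}\bigl(jp'\tilde q+p\tilde q'\bigr)$ as the exact derivative $(p^{j}\tilde q)'$. The only (immaterial) difference is that you treat the type-(a) penultimate function bracketed with $SF_k^y$ while the paper writes out the symmetric type-(b)/$SF_k^x$ case, and you make explicit the observation that $n\geq 1$, which the paper leaves implicit.
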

\begin{proof}
 Consider the vector field $[A_{n-1},[..[A_2,[A_1,A_0]..]]$. Due to lemma $\ref{lempolynom}$ exactly one of the following cases occurs:
\[i_{[A_{n-1},[..[A_2,[A_1,A_0]]..]]}\omega = \begin{cases}
dx^j q(z)&(a)\\dy^j q(z)&(b)\\dq(z)&(c)
\end{cases}
 \]for some $j>0$ and some $q\in \C[z]$.
If $A_n = SF_k^x$ for some $k\in\N_0$, then together with the calculation in the proof of lemma $\ref{lempolynom}$ one gets:
 \[i_{[A_{n},[A_{n-1}[..[A_2,[A_1,A_0]]..]]]}\omega = df=\begin{cases}
dx^{j+k+1}q'(z)&(a)\\dx^k y^{j-1}(jp'(z)q(z)+p(z)q'(z))&(b)\\dx^{k+1}q'(z)&(c).
\end{cases}
 \]
Since $f$ is a polynomial in $z$ all cases except (b) with $k=j-1$ can be excluded. Therefore $f = x^k y^{j-1}(jp'(z)q(z)+p(z)q'(z)) = p(z)^k
((k+1)p'(z)q(z)+p(z)q'(z)) = (p(z)^{k+1}q(z))'$ for some $q\in \C[z]$.
The identical consideration works for $A_n = SF_k^y$.
\end{proof}
\begin{remark} \label{bemoab}
 If one chose in the last step $k=j+i-1$ instead of $k=j-1$ for some $i\in\N$, the polynomial in the end of the calculation would have been $x^i(pq)'=x^i f(z)$
(respectively $y^i f(z)$). Hence if $f(z)$ corresponds to a Lie combination of shear fields, then so does the polynomial $x^i f(z)$ (respectively $y^i f(z)$).
By permuting $SF_i^x$ and $SF_i^y$ the corresponding polynomial switches the sign and $x$ and $y$ get permuted, hence both $x^i f(z)$ and $y^i f(z)$ correspond
to Lie combinations of shear fields.
\end{remark}

\begin{corollary}
If a hyperbolic vector field is a Lie combination of shear fields then it is of the form $HF_{(pq)''}$ for some $q\in\C[z]$. In particular if $p\in\C[z]$
with degree $n \geqslant 3$, then the the hyperbolic vector fields $HF_{z^i}$ with $i< n-2$ are not Lie combinations of shear fields.
\end{corollary}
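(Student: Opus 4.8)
The plan is to pass to the function side of the duality $\Theta\mapsto f$, $i_\Theta\omega=df$, and then read off the structure from Lemmas \ref{lemschachtelung}, \ref{lempolynom} and \ref{f}. First I would note, extending Lemma \ref{satzi} by linearity, that a hyperbolic field $HF_h$ corresponds to the function $F=F(z)$ determined by $F'(z)=h(z)$; in particular its associated function depends on $z$ alone. Hence saying that $HF_h$ is a Lie combination of shear fields is the same as saying that this purely $z$-dependent function $F$ lies in the space of functions dual (under $i_\bullet\omega$) to the Lie algebra generated by the $SF_i^x$ and $SF_i^y$.

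Next, by Lemma \ref{lemschachtelung} that Lie algebra is spanned as a vector space by nested brackets $B_\alpha=[A_n,[\ldots,[A_1,A_0]\ldots]]$ of shear fields. Writing $i_{B_\alpha}\omega=dg_\alpha$ and using linearity of $i_\bullet\omega$, I would obtain $F=\sum_\alpha c_\alpha g_\alpha$ modulo a constant. Lemma \ref{lempolynom} tells us that each $g_\alpha$ has one of the three shapes $x^jq(z)$, $y^jq(z)$ (with $j>0$) or $q(z)$, and these are exactly the canonical building blocks of the unique representation (1). Since $F$ involves no positive power of $x$ or $y$, the uniqueness of (1) forces the type (a) and type (b) contributions to cancel among themselves, so that $F$ equals the sum of the type (c) contributions $g_\alpha=q_\alpha(z)$ alone. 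For each such $\alpha$ the bracket $B_\alpha$ yields a function of $z$ only, so Lemma \ref{f} applies termwise and gives $g_\alpha=(p\,\tilde q_\alpha)'$. Summing, $F=(p\,q)'$ with $q=\sum_\alpha c_\alpha\tilde q_\alpha$, hence $h=F'=(pq)''$ and $HF_h=HF_{(pq)''}$, which is the first assertion.

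For the \emph{in particular} part I would argue by degree. If $HF_{z^i}$ were a Lie combination of shear fields, then by the above $z^i=(pq)''$ for some $q\in\C[z]$; integrating twice, $pq=\tfrac{z^{i+2}}{(i+1)(i+2)}+az+b$ for constants $a,b$. The right-hand side is a nonzero polynomial of degree exactly $i+2$ that is divisible by $p$, so $\deg p=n$ forces $i+2\geq n$. For $i<n-2$ this is impossible, so $HF_{z^i}$ is not a Lie combination of shear fields.

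The one delicate point, and the place I would take care to justify rather than compute, is the separation of the type (c) part of $F$: it rests entirely on the uniqueness of representation (1), which simultaneously guarantees that no cancellation among the $x^jq(z)$ and $y^jq(z)$ terms can manufacture genuine $z$-dependence and that the surviving purely $z$-dependent part is exactly the sum of the type (c) brackets, to which Lemma \ref{f} may then be applied one summand at a time. Everything beyond this is routine bookkeeping with the bracket formulas already recorded.
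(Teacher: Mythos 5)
Your proof is correct and follows essentially the route the paper intends: the corollary is stated there as an immediate consequence of Lemmas \ref{satzi}, \ref{lemschachtelung}, \ref{lempolynom} and \ref{f}, and your argument simply makes this explicit, using the uniqueness of representation $(1)$ to isolate the type~(c) contributions and then applying Lemma \ref{f} termwise. The degree count for the \emph{in particular} part ($\deg(pq)=i+2\geq n$) is also the intended one.
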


In addition we can make the following observation: 

\begin{corollary} \label{dense} For $p\in\C[z]$ with degree $n \geqslant 3$ the Lie algebra generated by holomorphic shear fields is not dense in the Lie
algebra of holomorphic volume preserving vector fields.
\end{corollary}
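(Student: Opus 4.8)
The plan is to produce a continuous linear functional on the space of holomorphic volume preserving vector fields that vanishes on the whole Lie algebra generated by the shear fields but is nonzero on a hyperbolic field, for instance $HF_1 = x\partial_x - y\partial_y$. Such functionals come from the periods of the correspondence $\Theta\mapsto f$ over the $2$-cycles detected by $H^2(D_p,\C)\cong\C^{n-1}$ from Proposition \ref{sconn}. Throughout I use that a holomorphic volume preserving field $\Theta$ on the simply connected Stein manifold $D_p$ has $i_\Theta\omega = df$ for a holomorphic $f$, unique up to an additive constant (closed holomorphic $1$-forms are exact here, exactly as in the algebraic case).

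First I would set up the cycles. Let $z_1,\dots,z_n$ be the roots of $p$ and let $S_k$ ($k=1,\dots,n-1$) be the $2$-sphere lying over a path $\gamma_k$ from $z_k$ to $z_{k+1}$, built from the fibre loops $\{|x|=R(z)\}$ capped off at the two points $(0,0,z_k),(0,0,z_{k+1})$; these represent a basis of $H_2(D_p)$ coming from the bouquet in Proposition \ref{sconn}. Since $df\wedge\omega\in\Omega^{3,0}(D_p)=0$, the $2$-form $f\omega$ is closed, so the periods $\int_{S_k}f\omega$ depend only on the homology class. In the chart $\omega=\frac{dx}{x}\wedge dz$ one computes $\frac{dx}{x}\wedge dz = i\,d\phi\wedge dz$ on $S_k$, and the fibre integral $\frac{1}{2\pi}\int_0^{2\pi} f\,d\phi$ extracts exactly the $x$-independent (pure $z$-) part $G(z)$ of $f$ in the expansion $(1)$. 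Hence
\[
\int_{S_k} f\,\omega = 2\pi i\int_{z_k}^{z_{k+1}} G(z)\,dz, \qquad \int_{S_k}\omega = 2\pi i\,(z_{k+1}-z_k).
\]

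Next I would record what the periods do on the Lie algebra generated by the shear fields. By Lemma \ref{lemschachtelung} this Lie algebra is spanned by iterated brackets, and by Lemmas \ref{lempolynom} and \ref{f} the pure $z$-part of any such element is of the form $(pQ)'$ for some $Q\in\C[z]$, while the summands of type $x^jq(z),y^jq(z)$ with $j>0$ contribute nothing to the fibre integral. Therefore $\int_{S_k} f\omega = 2\pi i\,[pQ]_{z_k}^{z_{k+1}} = 0$ for every $k$, since $p$ vanishes at each root. To pass to holomorphic shears, note that a holomorphic shear field $h(x)\,SF_0^x$ is the compact-open limit of the finite sums $\sum_i c_i\,SF_i^x$ given by the Taylor partial sums of $h$, and the bracket is continuous for the compact-open topology (Cauchy estimates), so the Lie algebra generated by holomorphic shear fields lies in the closure of the one generated by the algebraic shear fields. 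As each period map $\Theta\mapsto\int_{S_k}f\omega$ (for a fixed normalisation of $f$) is continuous, the constant-invariant combination
\[
\Psi(\Theta) = (z_3-z_2)\int_{S_1} f\omega - (z_2-z_1)\int_{S_2} f\omega
\]
is a well-defined continuous functional (adding a constant to $f$ changes the two terms by cancelling multiples of $\int_{S_k}\omega$) which vanishes on the closure of the shear Lie algebra.

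Finally I would evaluate $\Psi$ on a hyperbolic field not reachable algebraically. For $HF_1$ one has $i_{HF_1}\omega = dz$ by Lemma \ref{satzi}, so $f=z$, $G(z)=z$, giving $\int_{S_k}f\omega = \pi i\,(z_{k+1}^2-z_k^2)$ and
\[
\Psi(HF_1) = \pi i\,(z_2-z_1)(z_3-z_2)(z_1-z_3)\neq 0,
\]
since the roots are distinct (this is where $n\geq 3$ enters, via the previous corollary that $HF_1=HF_{z^0}$ is not an algebraic Lie combination of shears). As $\Psi$ is continuous and vanishes on the closure of the Lie algebra generated by the holomorphic shear fields, $HF_1$ is not in that closure, which proves non-density. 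The main obstacle in making this rigorous is the geometric step: realising the homology classes by concrete cycles $S_k$ and justifying the fibre-integral formula uniformly up to the two pinch points, together with the continuity of the bracket and of $\Theta\mapsto f$ in the compact-open topology; the algebraic input that the pure $z$-part is always $(pQ)'$ is already supplied by Lemmas \ref{lempolynom}, \ref{f} and the preceding corollary.
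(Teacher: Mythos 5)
Your proposal is correct and is essentially the paper's own proof: your period functionals $\int_{S_k} f\,\omega$ are precisely the paper's double integrals $\int_{z_1}^{z_j} \int_{|x|=1} g\, \frac{dz \wedge dx}{x}$ (a fibre integral extracting the absolute term $a_0(z)$ of the Laurent expansion in $x$, then integrated between roots of $p$), and both arguments combine the same algebraic input (Lemmas \ref{lempolynom} and \ref{f}) with the observation that holomorphic shear fields are compact-open limits of algebraic ones, so the shear Lie algebra lies in a closed proper subset. The only cosmetic differences are your homological packaging of these integrals as periods over the bouquet spheres and your explicit evaluation on $HF_1$ with the constant-invariant combination $\Psi$, which the paper handles implicitly by phrasing the condition on $a_0$ as equality of finitely many continuous expressions.
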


\begin{proof}  Formula $(1)$ for a regular function$ f$ on $D_p$ can be viewed as a Laurent expansion of the restriction of $f$ to the open subset $x\ne 0 \cong
\C^*_x \times \C_z$
with respect to the variable $x \in \C^*$ with coefficients being  functions of $z$. Analogously any holomorphic function $g$ on $D_p$ has such a Laurent
expansion with
coefficients holomorphic functions in $z$

$$ g = \sum_{i = - \infty}^\infty a_i (z) x^i .$$

We have established that the regular function $f$ corresponding under  $i_{\Theta}\omega = df$ to an  algebraic vector field $\Theta$ which is a Lie combination
of algebraic shear fields  satisfy the special condition $a_0 (z)  = (h p)^\prime$, i.e., the absolute term $a_0 (z)$ (which is unique associated to $\Theta$ up
to a constant) is the derivative of a function
divisible by the defining polynomial $p$. The condition that a function $g$ on $\C^* \times \C$ has an absolute term which is up to a constant  the derivative
of a function
divisible by the defining polynomial $p$ is closed in c.-o. topology. More explicitly, let $z_1, \ldots , z_n$ be the distinct simple zeros of $p$, then the
condition is
equivalent to the equality of  all the expressions

$$ (z_j -z_1) \int_{z_1}^{z_j} \int_{\vert x\vert = 1}  g (x,z) \frac{ dz \wedge dx}{x}  \quad j= 2, 3, \ldots , n .$$ Since holomorphic shear fields are limits
(in c.-o. topology) of
algebraic shear fields the holomorphic function corresponding to a Lie combination of holomorphic shear fields has an absolute term of the same form. 
Thus for $p$ with degree $\ge 3$ the Lie algebra generated by holomorphic shear fields is contained in the closed proper subset of the Lie algebra of
holomorphic volume preserving vector fields defined by the above condition on the absolute term.
\end{proof}


\subsection{Description of the Lie Algebra generated by Shear Fields}
After negating the question whether every volume preserving vector field is a Lie combination of shear fields, in the this section it will be investigated
which vector fields exactly are Lie combination of such ones. Concretely all of the volume preserving vector fields whose absolute term of the corresponding
function is
of the special 
form described in Lemma \ref{f}   are a Lie combination of shear fields. This proof is following the same concept developed in  \cite{lind} to prove  the fact
that the shear fields and another class of (non volume preserving) vector fields called overshear fields do generate the Lie algebra of algebraic vector fields
of $D_p$.
\begin{lemma} \label{lemklammern}
 The following equalities hold:
\begin{eqnarray}
 i_{[SF_i^x,SF_i^y]}\omega&=&d(p^i p')\\
 i_{[SF_0^x,[SF_0^x,SF_1^y]]}\omega&=&d(pp')' \\
 i_{[SF_{i_k - 1}^x, .. [SF_{i_2 - 1}^x,[SF_{i_1 -1}^x,HF_f]]..]}\omega &=& d(x^{i_1 + .. + i_k}f^{(k-1)})\label{ableitung}\\
 i_{[HF_{f_k}, .. [HF_{f_2},[SF_i^x,HF_{f_1}]]..]}\omega&=&(i+1)^{k-1}d(x^{i+1} f_1 f_2 .. f_k). \label{lemklammernprodukte}
\end{eqnarray}

\end{lemma}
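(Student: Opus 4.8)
The plan is to reduce all four identities to the bracket--to--function dictionary of Lemma \ref{lemi}: if $\Theta$ is volume preserving with $i_\Theta\omega = df$ and $\Psi$ is volume preserving, then $i_{[\Psi,\Theta]}\omega = d(\Psi f)$. Thus each outer bracket in the iterated expressions simply applies the corresponding derivation to the function attached to the inner field, and the whole computation becomes an exercise in applying the derivations $SF_i^x$, $SF_i^y$ and $HF_f$ to explicit functions of $x,y,z$, modulo the relation $xy=p(z)$. Before starting I would record the action on coordinates: $SF_i^x(x)=0$, $SF_i^x(y)=p'x^i$, $SF_i^x(z)=x^{i+1}$ (and symmetrically for $SF_i^y$), while $HF_f(x)=fx$, $HF_f(y)=-fy$, $HF_f(z)=0$. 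The two facts that do most of the work are that an $x$-shear kills $x$ and sends $z$ to $x^{i+1}$, and that a hyperbolic field kills $z$ and scales $x^{m}$ by $mf$.

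For the first identity I would start from $i_{SF_i^y}\omega = d\!\left(\tfrac{y^{i+1}}{i+1}\right)$ (Lemma \ref{satzi}) and apply Lemma \ref{lemi} with $\Psi=SF_i^x$: since $SF_i^x\!\left(\tfrac{y^{i+1}}{i+1}\right)=y^i\,SF_i^x(y)=y^i p'x^i=(xy)^i p'=p^i p'$, the result is $d(p^i p')$. For the second I would apply $SF_0^x$ twice to $\tfrac{y^2}{2}$ (the function of $SF_1^y$): the first application gives $y p'$, and the second gives $p'\cdot SF_0^x(y)+y\cdot p''\,SF_0^x(z)=(p')^2+xy\,p''=(p')^2+pp''=(pp')'$, again using $xy=p$.

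The third and fourth identities I would prove by induction on the number of nested brackets $k$. For \eqref{ableitung}, the base case $k=1$ gives $i_{[SF_{i_1-1}^x,HF_f]}\omega=d\bigl(SF_{i_1-1}^x F\bigr)=d(x^{i_1}f)$ where $F'=f$; for the inductive step, applying $SF_{i_k-1}^x$ to $x^{i_1+\cdots+i_{k-1}}f^{(k-2)}$ leaves the pure $x$-power untouched (because $SF^x$ kills $x$) and differentiates $f^{(k-2)}$ while multiplying by $SF_{i_k-1}^x(z)=x^{i_k}$, yielding $x^{i_1+\cdots+i_k}f^{(k-1)}$. For \eqref{lemklammernprodukte}, the base case $[SF_i^x,HF_{f_1}]$ produces $x^{i+1}f_1$, and each further bracket with $HF_{f_j}$ multiplies by $f_j$ and picks up a factor $i+1$, since $HF_{f_j}$ annihilates every factor depending only on $z$ and acts on $x^{i+1}$ by $HF_{f_j}(x^{i+1})=(i+1)f_j x^{i+1}$; after $k-1$ such brackets this accumulates the constant $(i+1)^{k-1}$ and the product $f_1\cdots f_k$.

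The computations are entirely routine once the dictionary is in place; the only points that require care, rather than any genuine obstacle, are keeping the order of the arguments in Lemma \ref{lemi} straight (the left, outer field in $[\Psi,\Theta]$ is the one that acts on the function) and consistently substituting $xy=p(z)$ at the right moments so that Laurent terms in $x,y$ collapse to the claimed polynomials in $z$. The antiderivative ambiguity in passing from $HF_f$ to its function $F$ with $F'=f$ is harmless because every identity is an equality of exact $1$-forms.
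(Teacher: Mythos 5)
Your proposal is correct and follows essentially the same route as the paper: both reduce everything to the dictionary $i_{[\Psi,\Theta]}\omega = dL_\Psi(f)$ of Lemma \ref{lemi} together with the base correspondences of Lemma \ref{satzi}, and then compute the iterated brackets inductively, substituting $xy=p(z)$ where needed. The only differences are cosmetic (you make the coordinate actions of the derivations and the antiderivative $F$ with $F'=f$ explicit, which the paper leaves implicit), so nothing further is required.
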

\begin{proof}
 The following calculations are according to theorem $\ref{satzi}$ and lemma $\ref{lemi}$:\newline
\begin{eqnarray*}
 (1) \quad i_{[SF_i^x,SF_i^y]}\omega &=& dL_{SF_i^x}\left(\frac{y^{i+1}}{i+1}\right)\\
&=& dp'(z)x^i y^i.
\end{eqnarray*}
\begin{eqnarray*}
(2) \quad \quad i_{[SF_0^x,SF_1^y]}\omega &=&dyp'(z)\\
i_{[SF_0^x,[SF_0^x,SF_1^y]]}\omega&=&dL_{SF_0^x}\left(yp'(z)\right)\\
&=&dp'(z)p'(z) + xyp''(z)\\
&=&d(p(z)p'(z))'.
\end{eqnarray*}
\begin{eqnarray*}
 (3) \quad \quad i_{[SF_{i_1 -1}^x,HF_f]}\omega&=&dx^{i_1}f\\
i_{[SF_{i_2 -1}^x,[SF_{i_1 -1}^x,HF_f]]}\omega&=&dL_{SF_{i_2 -1}^x}(x^{i_1}f)\\
&=&x^{i_1+i_2}f'\\
i_{[SF_{i_k - 1}^x, .. [SF_{i_2 - 1}^x,[SF_{i_1 -1}^x,HF_f]]..]}\omega &=& dL_{SF_{i_k - 1}^x}(x^{i_1+i_2 + .. + i_{k-1}}f^{(k-2)})\\
&=&d(x^{i_1+i_2 + .. + i_{k}}f^{(k-1)}).
\end{eqnarray*}
\begin{eqnarray*}
 (4) \quad \quad i_{[SF_i^y,HF_{f_1}]}\omega&=&dx^{i+1}f_1\\
i_{[HF_{f_2},[SF_i^x,HF_{f_1}]]}\omega&=&dL_{HF_{f_2}}(x^{i+1}f_1)\\
&=&(i+1)x^{i+1}f_1 f_2\\
i_{[HF_{f_k}, .. [HF_{f_2},[SF_i^x,HF_{f_1}]]..]}\omega&=&dL_{HF_{f_k}}((i+1)^{k-2}x^{i+1}f_1 .. f_{k-1})\\
&=&d((i+1)^{k-1}x^{i+1}f_1 .. f_{k}).
\end{eqnarray*}

\end{proof}
\begin{corollary} \label{corklammern}
 The previous lemma shows:
\begin{eqnarray}
 [SF_i^x,SF_i^y]&=&HF_{(p^i p')'}  \label{anfang} \\
\left[SF_0^x,[SF_0^x,SF_1^y]\right]&=&HF_{(pp')''}\\
\left[SF_{i_1 + .. + i_k - 1}^y,[SF_{i_k - 1}^x, .. [SF_{i_1 -1}^x,HF_f]..]\right]&=&HF_{(p^{i_1 + .. + i_k}f^{(k-1)})''}\\
\left[SF_i^y,[HF_{f_k}, .. [SF_i^x,HF_{f_1}]..]\right]&=&HF_{(i+1)^{k-1}(p^{i+1} f_1 f_2 .. f_k)''}.
\end{eqnarray}

\end{corollary}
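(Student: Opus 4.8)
The plan is to read off Corollary \ref{corklammern} from Lemma \ref{lemklammern} through the bijection $\Theta\leftrightarrow f$ (with $i_\Theta\omega = df$) established before Lemma \ref{satzi}. The one auxiliary fact I need is how a hyperbolic field sits inside this correspondence: since $HF_g$ is linear in $g$, the identity $i_{HF_{z^i}}\omega = \frac{dz^{i+1}}{i+1}$ of Lemma \ref{satzi} extends by linearity to $i_{HF_g}\omega = dG$ for every $g\in\C[z]$, where $G$ is the $z$-antiderivative of $g$. Consequently, whenever a volume preserving field $\Theta$ satisfies $i_\Theta\omega = dh$ with $h = h(z)$ a polynomial in $z$ \emph{alone}, the bijection forces $\Theta = HF_{h'}$. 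This single observation converts each line of Lemma \ref{lemklammern} whose right-hand function depends only on $z$ into a hyperbolic field.

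For the first two lines the observation applies directly. The first equation of Lemma \ref{lemklammern} reads $i_{[SF_i^x,SF_i^y]}\omega = d(p^ip')$; as $p^ip'$ is a function of $z$ alone, we get $[SF_i^x,SF_i^y] = HF_{(p^ip')'}$. The second equation gives $i_{[SF_0^x,[SF_0^x,SF_1^y]]}\omega = d(pp')'$, and since $(pp')'$ depends only on $z$, we obtain $[SF_0^x,[SF_0^x,SF_1^y]] = HF_{(pp')''}$.

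For the last two equations I would first note that Lemma \ref{lemklammern} by itself only produces fields of type (a) in the sense of Lemma \ref{lempolynom}; one more bracket with a suitably indexed $y$-shear is needed to reach a function of $z$ alone. Equation \eqref{ableitung} produces the type-(a) function $x^{j}q$ with $j = i_1 + \dots + i_k$ and $q = f^{(k-1)}$. Bracketing once more with $SF_{j-1}^y$ and invoking the case-(a) computation from the proof of Lemma \ref{lempolynom}, the factor $x^{j-1}y^{j-1}$ collapses to $p^{j-1}$ and one is left with the function $p^{j-1}(jp'q + pq') = (p^{j}q)'$, which depends on $z$ alone; the observation then yields the third equation. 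The fourth follows identically from \eqref{lemklammernprodukte} with $q = (i+1)^{k-1}f_1\cdots f_k$ and $j = i+1$, so that the correct $y$-shear index is $i$ and one arrives at $HF_{(i+1)^{k-1}(p^{i+1}f_1\cdots f_k)''}$.

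The only delicate point — and it is purely algebraic — is the identity $(p^{j}q)' = p^{j-1}(jp'q + pq')$, which is exactly what makes the output of the final bracket a total $z$-derivative (case (c) of Lemma \ref{lempolynom}) and hence forces the resulting field to be hyperbolic rather than of type (a). Apart from this bookkeeping there is no real obstacle, since all the analytic content is already carried out in Lemma \ref{lemklammern} and encoded in the bijection.
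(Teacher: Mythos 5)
Your proposal is correct and follows exactly the route the paper intends: Corollary \ref{corklammern} is stated as an immediate consequence of Lemma \ref{lemklammern}, read through the bijection $\Theta\leftrightarrow f$ (with the linearity observation $i_{HF_g}\omega = dG$, $G'=g$) and, for the last two identities, one additional bracket with the appropriately indexed $y$-shear via the case-(a) computation of Lemma \ref{lempolynom}, which is precisely what you do. The details you fill in (including the identity $(p^{j}q)' = p^{j-1}(jp'q+pq')$ after substituting $xy=p(z)$) are the ones the paper leaves implicit, so there is nothing to add.
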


\begin{lemma} \label{lempn}
 Let $n=\deg(p)$, then for every $q\in\C[z]$ the vector field $HF_{(p^{n}q)''}$ is a Lie combination of shear fields.
\end{lemma}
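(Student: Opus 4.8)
The plan is to argue entirely on the level of the functions attached to hyperbolic fields. Write $L$ for the Lie algebra generated by the shear fields and put
\[
V=\{\,g\in\C[z]\;:\;HF_g\in L\,\}.
\]
Since $f\mapsto HF_f$ is linear and injective and $L$ is a linear subspace, $V$ is a linear subspace of $\C[z]$, and the assertion to be proved is precisely that $(p^nq)''\in V$ for every $q\in\C[z]$. As $q\mapsto(p^nq)''$ is linear it is enough to treat $q=z^d$, and for this I would produce, for each $d\ge 0$, one polynomial $q_d$ of degree exactly $d$ with $(p^nq_d)''\in V$. A short induction on $d$ then yields $(p^nz^d)''\in V$: assuming $(p^nz^e)''\in V$ for all $e<d$ and writing $q_d=c_dz^d+r$ with $c_d\neq0$ and $\deg r<d$, the form $(p^nr)''$ is a linear combination of the $(p^nz^e)''$ with $e<d$ and hence lies in $V$, so $(p^nz^d)''=c_d^{-1}\bigl((p^nq_d)''-(p^nr)''\bigr)\in V$ as well; summing over $d$ gives $(p^nq)''\in V$ for all $q$.

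To manufacture the $q_d$ I would use two facts already established. First, identity \eqref{anfang} reads $[SF_i^x,SF_i^y]=HF_{(p^ip')'}=\tfrac1{i+1}HF_{(p^{i+1})''}$, so that
\[
(p^m)''\in V\qquad\text{for all }m\ge 1 .
\]
Second, the third identity of Corollary \ref{corklammern}, read with $f\in V$ and with the indices chosen so that $i_1+\dots+i_k=n$ (possible exactly when $k\le n$, e.g.\ $i_1=n-k+1$, $i_2=\dots=i_k=1$), gives
\[
\bigl(p^n f^{(k-1)}\bigr)''\in V\qquad\text{for every }f\in V\text{ and every }1\le k\le n ,
\]
that is, $(p^nf^{(j)})''\in V$ for every $f\in V$ and every $0\le j\le n-1$.

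Combining the two with $f=(p^m)''\in V$ produces $f^{(j)}=(p^m)^{(j+2)}$, whence
\[
\bigl(p^n\,(p^m)^{(j+2)}\bigr)''\in V,\qquad m\ge1,\ 0\le j\le n-1 ,
\]
which is of the desired shape $(p^nq)''$ with $q=(p^m)^{(j+2)}$ of degree $nm-2-j$ (and $q\neq0$ as long as $j+2\le nm$). The decisive point, and the only place where the hypothesis $n=\deg p$ is genuinely used, is that these degrees exhaust $\{0,1,2,\dots\}$ without gaps: for fixed $m$, letting $j$ run through $0,\dots,n-1$ makes $\deg q$ run through the $n$ consecutive integers filling the interval $[\,n(m-1)-1,\ nm-2\,]$, and in passing from $m$ to $m+1$ the top value $nm-2$ is immediately followed by the bottom value $nm-1$; hence the intervals for $m=1,2,3,\dots$ abut and cover every integer $\ge -1$. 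Discarding only the degenerate choice $m=1$, $j=n-1$ (for which $(p^1)^{(n+1)}=0$), this delivers a $q_d$ of degree exactly $d$ with $(p^nq_d)''\in V$ for every $d\ge0$. The main obstacle is exactly this tiling: one must check that the cap of $n-1$ derivatives imposed by Corollary \ref{corklammern} when the exponent of $p$ is held equal to $n$ yields blocks of length precisely $n$ that meet end to end. Granting it, the induction of the first paragraph finishes the proof.
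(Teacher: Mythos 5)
Your proof is correct and follows essentially the same route as the paper: both produce the seed polynomials $(p^m)''$ from the brackets $[SF_i^x,SF_i^y]$, both apply the iterated-bracket identity with exponent sum equal to $n$ (your use of Corollary \ref{corklammern}, which is Lemma \ref{lemklammern} (\ref{ableitung}) followed by one $y$-shear bracket), and both finish with the same degree-tiling observation that the degrees $nm-2-j$, $0\le j\le n-1$, exhaust all of $\N_0$. The only difference is bookkeeping: you carry out the spanning argument directly on the hyperbolic functions $(p^nq)''$, whereas the paper first spans all functions $x^nq$ and applies the single bracket with $SF_{n-1}^y$ at the very end.
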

\begin{proof}
 In a first step one observes that every polynomial $x^{n}q$ corresponds to a Lie combination of shear fields. Truly due to lemma $\ref{lemklammern}
(\ref{ableitung})$ the polynomials $x^{n}f^{(k)}$ for $k=0$, .. ,$(n-1)$ correspond to a Lie combination of shear fields, if $HF_f$ was already such a
combination. According to corollary $\ref{corklammern} (\ref{anfang})$ it is possible to choose for $f$ the polynomials $p''$, $(pp')'$, $(p^2 p')'$, ...  (i.e.
polynomials of degree $n-2$, $2n-2$, $3n-2$, ...). Therefore after differentiating  up to $n$ times there is a polynomial for every degree and hence they build
a basis for $\C[z]$ and every polynomial $q\in\C[z]$ can be substituted in $x^{n}q$. After taking the Lie bracket with the shear field $SF_{n-1}^y$ the vector
field becomes $HF_{(p^n q)''}$.
\end{proof}
Let $n=\deg p$ and $W \subset \C[z]$ be a vector space with
\begin{eqnarray*}
 (i) &(p^i)'' \in W & \forall i\in \N\\
(ii) &(pp')'' \in W&\\
(iii)&(p^{n}q)'' \in W & \forall q\in \C[z]\\
(iv)&f_1, ..,f_k \in W \Longrightarrow (p f_1 .. f_k)'' \in W & \forall k\in \N.
\end{eqnarray*}
Now the goal is to show that $W$ contains all polynomials of the type $(pq)''$. Since the vector space of all $f$ with $HF_f$ a Lie combination of shear fields
is a vector space with properties (i)-(iv), every vector field $HF_{(pq)''}$ would be a such combination.\newline

In a first step it is shown that the algebra $A_W = \mathrm{span}\lbrace f_1\cdot\ldots\cdot f_k: \ f_i \in W, \ 1\leq i \leq k \in \N \rbrace$ generated by
$W$ is equal to $\C[z]$. Then it is allowed to substitute all polynomials in (iv) and hence the
claim is proven.

\begin{lemma}\label{lemcomroot}
There is no element $a\in \C$, such that $f(a)=0$ for all $f\in A_W$.
\end{lemma}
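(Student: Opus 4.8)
The plan is to reduce the statement about the generated algebra $A_W$ to a statement about $W$ itself, and then, for each candidate common root $a\in\C$, to exhibit an explicit element of $W$ that does not vanish at $a$. Since $W\subseteq A_W$, it suffices to show that $W$ has no common root: if some $f\in W$ satisfies $f(a)\neq 0$, then that same $f$ lies in $A_W$ and witnesses that $a$ is not a common root of $A_W$. (Equivalently, a product $f_1\cdots f_k$ vanishes at $a$ exactly when one factor does, so the common roots of $A_W$ and of $W$ coincide.) I would therefore fix $a\in\C$ and split into two cases according to whether $p(a)=0$.

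In the case $p(a)=0$, so that $a$ is a zero of $p$, I would use property $(i)$ with $i=2$: the element $(p^2)''=2pp''+2(p')^2$ lies in $W$. Evaluating at $a$ and using $p(a)=0$ gives $(p^2)''(a)=2\,(p'(a))^2$, which is nonzero because $p$ has only simple zeros, so $p'(a)\neq 0$. In the complementary case $p(a)\neq 0$, I would invoke property $(iii)$ with the specific choice $q=(z-a)^2$, so that $(p^n q)''\in W$. Since $q(a)=q'(a)=0$ and $q''(a)=2$, the Leibniz expansion $(p^n q)''=(p^n)''q+2(p^n)'q'+p^n q''$ collapses at $a$ to $(p^n q)''(a)=p(a)^n q''(a)=2\,p(a)^n\neq 0$. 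In each case this produces the desired witness in $W\subseteq A_W$.

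Since every $a\in\C$ falls into one of the two cases, no $a$ can be a common root of $W$, hence none of $A_W$, which proves the lemma. The argument is essentially a direct computation, and I do not expect a serious obstacle. The only point that must be handled with care is the case distinction at the zeros of $p$: there the natural witness $(p^n q)''$ from property $(iii)$ degenerates to $0$, so one must fall back on property $(i)$ and use the simplicity of the zeros of $p$ to guarantee $p'(a)\neq 0$. Keeping track of which property supplies a nonvanishing element at each type of point is the crux of the verification.
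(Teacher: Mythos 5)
Your proof is correct, and it rearranges the paper's argument rather than reproducing it. You use the same raw ingredients --- property (i) via $(p^2)''$, property (iii), and the simplicity of the zeros of $p$ --- but you argue directly: after the (valid) reduction to showing that $W$ itself has no common root, you split on whether $p(a)=0$ and in each case exhibit an explicit element of $W$ not vanishing at $a$, with the choice $q=(z-a)^2$ making the first two Leibniz terms of $(p^n q)''$ vanish automatically so that only $2p(a)^n$ survives. The paper instead argues by contradiction: from (i) with $i=1,2$ it deduces $p''(a)=0$ and then $p'(a)=0$, concludes $p(a)\neq 0$ from simplicity of the zeros, and then uses (iii) with \emph{arbitrary} $q$ (together with (i) for $i=n$ to kill the term $(p^n)''(a)$) to force $q''(a)=0$ for all $q\in\C[z]$, which is absurd. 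Your constructive variant is slightly more economical --- it never needs $p''(a)=0$ or $(p^n)''(a)=0$, so (i) is only invoked for $i=2$ --- and it makes transparent which hypothesis supplies the witness at each type of point; the paper's contradiction framing, on the other hand, shows how assumed common vanishing propagates through the defining properties of $W$, which is the style carried on in the subsequent Lemmas \ref{lemimm} and \ref{leminj}.
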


\begin{proof}
 Suppose there is such an $a$, then $p''(a)=0$ and $p(a)p''(a)+p'(a)^2=0$ ((i) with $i=1$ and $i=2$) would hold, and hence $p'(a)=0$. Since $p$ has no double
zero point it follows that $p(a)\neq 0$. Due to (iii) $(p^n q)''(a)= (p^n)''q+2(p^n)'q' +p^n q''(a) = 0$ holds for all $q\in \C[z]$. The first summand vanishes
due to (i), the second due to $p'(a)=0$, therefore it remains $p^n q''(a)=0$. So it would be true that $q''(a)=0$ for all $q\in\C[z]$ what is clearly a
contradiction.

\end{proof}

\begin{lemma} \label{lemimm}
There is no element $a\in \C$ such that $f'(a)=0$ for all $f\in A_W$.
\end{lemma}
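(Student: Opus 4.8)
The plan is to mimic the strategy of the preceding Lemma~\ref{lemcomroot}: assume for contradiction that some $a\in\C$ is a common critical point, i.e. $f'(a)=0$ for every $f\in A_W$, and extract enough information about $p$ at $a$ to contradict the hypothesis that $p$ has only simple zeros. Since $W\subseteq A_W$, it suffices to feed this vanishing condition into the explicit generators of $W$ supplied by properties (i)--(iv); in fact only (i) and (iii) will be needed.

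First I would use (iii) to locate $a$. For every $q\in\C[z]$ the element $(p^nq)''$ lies in $W$, so $\big((p^nq)''\big)'(a)=(p^nq)'''(a)=0$. Expanding by the Leibniz rule, $(p^nq)'''=(p^n)'''q+3(p^n)''q'+3(p^n)'q''+p^nq'''$, and evaluating at $a$ with the special choice $q=(z-a)^3$ (so that $q,q',q''$ all vanish at $a$ while $q'''(a)=6$) kills every term except the last, giving $6\,p(a)^n=0$. Hence $p(a)=0$, so $a$ must be a zero of $p$.

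Next I would derive the contradiction from (i). Because $p$ has only simple zeros, $p(a)=0$ forces $p'(a)\neq 0$. Now $(p^3)''\in W$, so $(p^3)'''(a)=0$. A direct computation gives $(p^3)'''=6(p')^3+18\,p\,p'\,p''+3\,p^2p'''$, and at the root $a$ the last two summands vanish, leaving $(p^3)'''(a)=6\,(p'(a))^3$. Since $p'(a)\neq 0$ this is nonzero, contradicting $(p^3)'''(a)=0$. Therefore no such common critical point $a$ exists.

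The only genuinely delicate point is the first step: one must notice that property (iii), precisely because $q$ ranges over \emph{all} polynomials, rigidly pins $a$ down to a root of $p$ (this is the exact analogue of how (iii) was exploited in Lemma~\ref{lemcomroot} to force $q''(a)=0$). Once $a$ is known to be a \emph{simple} root, the contradiction coming from the cube $p^3$ is immediate, and properties (ii) and (iv) are not even required.
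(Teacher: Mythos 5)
Your proof is correct, and it is genuinely different from the paper's, even though both arguments draw only on properties (i) and (iii). The paper runs the logic in the opposite direction: it first uses (i) with $i=1,2,3$ to deduce successively $p'''(a)=0$, then $p'(a)p''(a)=0$, then $p'(a)=0$, so that $a$ is \emph{not} a root of $p$ (simple zeros); it then feeds this into (iii), first with $q=z$ to get $(p^n)''(a)=0$ and then with arbitrary $q$ to conclude $q'''(a)=0$ for all $q\in\C[z]$, a contradiction. You instead use (iii) \emph{first}, with the single test polynomial $q=(z-a)^3$ whose vanishing $2$-jet at $a$ annihilates all the terms involving the unknown values $(p^n)'(a),(p^n)''(a),(p^n)'''(a)$, which pins $a$ down as a root of $p$ --- the opposite intermediate conclusion --- and then a single evaluation of $(p^3)'''(a)=6p'(a)^3$ finishes. (Both intermediate conclusions are of course consistent with the situation, since each is a consequence of the same false hypothesis.) Your route is shorter and avoids the three-step cascade of computations with powers of $p$; what the paper's route buys is structural uniformity, since its proof of Lemma~\ref{lemimm} is a verbatim parallel of the proof of Lemma~\ref{lemcomroot} (where (iii) is likewise used last, with the unknown terms killed by previously established identities rather than by a choice of $q$). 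One small caveat on your commentary rather than your proof: your step with $q=(z-a)^3$ is not quite ``the exact analogue'' of how (iii) is exploited in Lemma~\ref{lemcomroot}; there the cross terms vanish because of facts already proved about $p$ at $a$, not because $q$ is chosen with a prescribed jet. That does not affect the validity of your argument.
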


\begin{proof}
 Suppose there is such an $a$. (i) with $i=1,2,3$ shows that $p'''(a) =0$, $(p^2)'''(a)=2(p(a)p'''(a)+3p'(a)p''(a))=0$ and $0=(p^3)'''(a)=3(p(a)^2 p'''(a)
+6p(a)p'(a)p''(a)+2p'(a)^3)$. The second equation shows that $p'(a)p''(a)=0$ and therefore due to the third equation we have $p'(a)=0$ and hence $p(a)\neq0$.
Furthermore (iii) shows $(p^n z)'''(a) = (p^n)'''(z)z + 3(p^n)''(z)\mid_{z=a}=0$ and since the first summand vanishes $(p^n)''(a)=0$ remains. Altogether we
have $(p^n q)'''(a)=((p^n)'''q+3(p^n)''q'+3(p^n)'q'' +p^n q''')(a)=p(a)^n q'''(a)=0$ or $q'''(a)=0$ for all $q\in\C[z]$, what is again a contradiction.

\end{proof}

\begin{lemma} \label{leminj}
There are no elements $a\neq b \in \C$, such that $f(a)=f(b)$ for all $f\in A_W$.
\end{lemma}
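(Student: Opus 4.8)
The plan is to argue by contradiction: suppose there are $a\neq b$ with $f(a)=f(b)$ for all $f\in A_W$. Since the evaluation maps $\mathrm{ev}_a,\mathrm{ev}_b\colon\C[z]\to\C$ are $\C$-algebra homomorphisms and $A_W$ is generated as an algebra by $W$, this is equivalent to $g(a)=g(b)$ for every $g\in W$, so I may work only with the generators coming from (i)--(iv).

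First I would pin down that $a$ and $b$ are (necessarily simple) zeros of $p$. Writing $(p^nq)''=(p^n)''q+2(p^n)'q'+p^nq''$ for the generators in (iii), the hypothesis gives $(p^nq)''(a)=(p^nq)''(b)$ for all $q$. Since $a\neq b$, Hermite interpolation lets me prescribe the six numbers $q(a),q'(a),q''(a),q(b),q'(b),q''(b)$ independently, so this identity forces all six coefficients to vanish; in particular $p^n(a)=p^n(b)=0$, whence $p(a)=p(b)=0$ and $p'(a),p'(b)\neq0$ because the zeros of $p$ are simple. Evaluating the remaining generators at these zeros and using $p(a)=p(b)=0$ then yields the low order relations $p'(a)^2=p'(b)^2$ (from (i), $i=2$), $p''(a)=p''(b)$ (from (i), $i=1$), and $p'(a)p''(a)=p'(b)p''(b)$ (from (ii)).

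The heart of the proof, and the step I expect to be the main obstacle, is to bootstrap these into the full system of relations
\[ p'(a)\,p^{(j)}(a)=p'(b)\,p^{(j)}(b),\qquad j\ge 0; \]
call these relations $(\star)$. The tool is the operator $L(g):=(pg)''$, which by (iv) maps $W$ into $W$ and at a simple zero $a$ acts by $L(g)(a)=p''(a)g(a)+2p'(a)g'(a)$. Iterating $L$ on the two generators $p''=(p)''$ and $(pp')''$ produces, for every $j$, an element of $W$ whose value at $a$ has the shape $p^{(j)}(a)\,C(a)+R_j(a)$, where, for the two starting generators, each monomial is a product of exactly $j-1$, resp.\ $j$, derivatives of $p$. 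I would prove $(\star)$ by strong induction on $j$: the inductive hypothesis together with $p'(a)^2=p'(b)^2$ shows that a product of an \emph{even} number of derivatives of order $\le j-1$ takes equal values at $a$ and $b$, while for an \emph{odd} number of such factors $\Pi$ one gets $p'(a)\Pi(a)=p'(b)\Pi(b)$. Choosing the starting generator so that the number of factors is even makes the remainder $R_j$ match at $a$ and $b$, and applying the odd rule to the leading coefficient $C$ (an odd product, in fact a positive multiple of a power of $p'$) lets me isolate the top term and deduce $(\star)$ for the next $j$. The delicate points here are precisely the parity bookkeeping and the verification that $C(b)\neq0$.

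Finally, $(\star)$ says exactly that the Taylor coefficients of $p'(a)\,p(z+a)$ and of $p'(b)\,p(z+b)$ agree, i.e.\ $p'(a)\,p(z+a)=p'(b)\,p(z+b)$ as polynomials. Comparing leading coefficients gives $p'(a)=p'(b)$, hence $p(z+a)=p(z+b)$, so $p$ is invariant under the nonzero translation $z\mapsto z+(b-a)$. A nonconstant polynomial admits no such period, which is the desired contradiction; therefore $A_W$ separates $a$ and $b$.
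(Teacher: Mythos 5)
Your proof is correct, and it is worth comparing with the paper's, because while the skeleton is the same, the key inductive step is executed differently. Both proofs reduce to the generators of $W$, use (iii) to force $p(a)=p(b)=0$ (your Hermite-interpolation argument is exactly the paper's explicit $6\times 6$ confluent Vandermonde determinant $4(a-b)^9\neq 0$, more cleanly packaged), then iterate the operator $g\mapsto (pg)''$ from (iv) to propagate derivative relations, and finally reach a contradiction by looking at a top derivative. The difference is in the bootstrapping: the paper first extracts $p'(a)=p'(b)$ from (ii), by combining $3p'(a)p''(a)=3p'(b)p''(b)$ with $p''(a)=p''(b)$, and then runs the induction directly on the inhomogeneous statements $p^{(l)}(a)=p^{(l)}(b)$, using only the single chain $\bigl(p(p\cdots(pp'')''\cdots)''\bigr)''$. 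Note that this early normalization silently divides by $p''(a)$ and thus needs $p''(a)\neq 0$, which is not guaranteed (a simple zero $a$ of $p$ can satisfy $p''(a)=0$, e.g. $p(z)=z^4-z$ at $a=0$). Your version never performs that division: you carry the homogeneous relations $p'(a)p^{(j)}(a)=p'(b)p^{(j)}(b)$ through the induction, only ever dividing by even powers of $p'$, which are nonzero and equal at $a$ and $b$ since the zeros of $p$ are simple and $p'(a)^2=p'(b)^2$; the identity $p'(a)=p'(b)$ appears only at the very end, from the leading Taylor coefficient. The price you pay is the parity bookkeeping and the need for \emph{both} starting generators $p''$ and $(pp')''$ (their iterates produce monomials with factor counts of opposite parity, so together they cover all $j$, whereas the paper, having $p'(a)=p'(b)$ in hand, can match monomials of either parity with one chain); the payoff is an argument that is uniform in all cases and in fact repairs the edge case the paper's shortcut misses. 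Your closing contradiction (a nonconstant polynomial admits no nonzero period) and the paper's ($p^{(n-1)}$ is affine with nonzero slope) are interchangeable.
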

\begin{proof}
 Suppose there are two such elements $a,b \in \C$. (iv) shows that \linebreak $(p^n z^i)''\mid_{z=a} = (p^n z^i)''\mid_{z=b}$ for all $i\in \N_0$. Since $(p^n
z^i)''=(p^n)''z^i + 2i(p^n)'z^{i-1} +i(i-1)p^n z^{i-2}$ one gets the system of linear equations, which summarizes the equations for $i=0,\ldots,5$:

\[   \left( \begin {array}{cccccc} 1&1&0&0&0&0\\\noalign{\medskip}a&b&1&1&0
&0\\\noalign{\medskip}{a}^{2}&{b}^{2}&2\,a&2\,b&2&2
\\\noalign{\medskip}{a}^{3}&{b}^{3}&3\,{a}^{2}&3\,{b}^{2}&6\,a&6\,b
\\\noalign{\medskip}{a}^{4}&{b}^{4}&4\,{a}^{3}&4\,{b}^{3}&12\,{a}^{2}&
12\,{b}^{2}\\\noalign{\medskip}{a}^{5}&{b}^{5}&5\,{a}^{4}&5\,{b}^{4}&
20\,{a}^{3}&20\,{b}^{3}\end {array} \right) \cdot  \left( \begin {array}{c} (p^n)''(a)\\\noalign{\medskip}-(p^n)''(b)\\\noalign{\medskip}2(p^n)'(a)
\\\noalign{\medskip}-2(p^n)'(b)\\\noalign{\medskip}(p^n)(a)\\\noalign{\medskip}-(p^n)(b)
\end {array} \right)  =  \left( \begin {array}{c} 0\\\noalign{\medskip}0\\\noalign{\medskip}0
\\\noalign{\medskip}0\\\noalign{\medskip}0
\\\noalign{\medskip}0\end {array} \right) 
\]
The determinant of this matrix is $4(a-b)^9$ and therefore nonzero for $a\neq b$ and hence it is shown that the coefficient vector is the zero vector and in
particular $p(a)=p(b)=0$ and therefore $p'(a)\neq 0 \neq p'(b)$. \newline
Due to (ii) we have $(pp')''(a) = p(a)p'''(a)+3p'(a)p''(a) = p(b)p'''(b) + p'(b)p''(b)$ and since $p(a)=p(b)=0$ and $p''(a)=p''(b)$ (due to (i)) $p'(a)=p'(b)$
holds. With (iv) ($k=1$) follows $(pp'')''(a)=p(a)p''''(a) + 2p'(b)p'''(b)+p''(b)^2=p(b)p''''(b) + 2p'(b)p'''(a)+p''(b)^2$ and hence $p'''(a) = p'''(b)$.
Using (iv) inductively one gets that $p^{(l)}(a)=p^{(l)}(b)$ for all $l$. Indeed a simple calculation shows that:
\[W \ni P:= \left(\underbrace{p(p(p \cdots (p}_{j}p'')''\ldots\ )'' )''\right)'' = \sum_{\substack{i_1+ \ldots +i_{j+1}=2j+2\\i_1\leq .. \leq i_{j+1}}}
\alpha_I\cdot p^{(i_1)}
\cdots p^{(i_{j+1})}
\]
with $a_I \in \N$. After inserting $a$ (resp. $b$) all summands with $i_1 =0$ vanish due to $p(a)=p(b)=0$. Assume that $p^{(l)}(a) =
p^{(l)}(b)$ for all $l\leq j+1$, so all the summands with $i_{j+1}\leq j+1$ have on both sides of the equation $P(a)=P(b)$ the same value and hence vanish as
well. For this reason only the equation $\alpha_I p'(a)^{j}p^{(j+2)}(a) =\alpha_I p'(b)^{j}p^{(j+2)}(b)$ remains and it follows inductively that $p^{(l)}(a) =
p^{(l)}(b)$ for all $l$.  This is a contradiction since the $(n-1)$-st derivative of a polynomial of degree $n$ is a polynomial of degree one with a nonzero
slope.
\end{proof}

\begin{proposition} \label{propAW}
 The algebra $A_W$ generated by $W$ is equal to $\C[z]$.
\end{proposition}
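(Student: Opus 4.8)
The plan is to pass to the unital algebra $\tilde A := \C\cdot 1 + A_W \subseteq \C[z]$ and to read the three preceding lemmas geometrically. By Lemma \ref{lemimm}, for each $a\in\C$ there is $h\in A_W$ with $h'(a)\neq 0$; in particular $A_W$ contains a nonconstant polynomial $f$ of some degree $d\geq 1$, so $\C[z]$ is a finitely generated module over $\C[f]\subseteq\tilde A$ and hence over $\tilde A$. Thus $\tilde A$ is a finitely generated $\C$-algebra, $C:=\Spec\tilde A$ is an affine curve, and the inclusion $\tilde A\hookrightarrow\C[z]$ defines a finite dominant morphism $\phi\colon\mathbb A^1\to C$. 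In this language Lemma \ref{leminj} says $\phi$ is injective on closed points, Lemma \ref{lemimm} says the differential of $\phi$ is nonzero (hence injective) at every point, and Lemma \ref{lemcomroot} says $A_W$ is contained in no maximal ideal $(z-a)$ of $\C[z]$.

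The heart of the argument, which I expect to be the main obstacle, is to show $\tilde A=\C[z]$, i.e.\ that $\phi$ is an isomorphism. Since $\phi$ is finite dominant it is surjective, and injectivity on closed points makes it bijective; as we work in characteristic zero, the degree of $\phi$ equals the number of points in a generic fibre, so $\phi$ is birational. Consequently $\C[z]$, being normal, finite and birational over $\tilde A$, is the normalization of $\tilde A$, and it remains to see that $\tilde A$ is already normal. I would check this locally: fix $a\in\C$, let $\mathfrak m\subset\tilde A$ be the contraction of $(z-a)$, and note that by injectivity $(z-a)$ is the only prime of $\C[z]$ over $\mathfrak m$, so that $\C[z]_{(z-a)}=\mathcal O_a$ is a discrete valuation ring, finite over $\tilde A_{\mathfrak m}$, with the same residue field $\C$. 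By Lemma \ref{lemimm} there is $h\in\tilde A_{\mathfrak m}$ with $\mathrm{ord}_a(h)=1$, so $h$ generates the maximal ideal of $\mathcal O_a$; hence $\mathfrak m\,\mathcal O_a=\mathfrak m_{\mathcal O_a}$ and $\mathcal O_a/\mathfrak m\,\mathcal O_a\cong\C$ is generated by $1$ as an $\tilde A_{\mathfrak m}$-module. Nakayama's lemma then gives $\tilde A_{\mathfrak m}=\mathcal O_a$, and as this holds at every maximal ideal we conclude $\tilde A=\C[z]$.

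Finally I would upgrade $\tilde A=\C[z]$ to $A_W=\C[z]$, and this is exactly the step where Lemma \ref{lemcomroot} is needed (recall that $A_W$ need not a priori contain the constants). The equality $\C\cdot 1 + A_W=\C[z]$ shows $A_W$ has codimension at most $1$ as a $\C$-subspace. If $A_W\neq\C[z]$, then $A_W$ has codimension exactly $1$ and $1\notin A_W$, so $\C[z]=\C\cdot 1\oplus A_W$. Writing an arbitrary $f\in\C[z]$ as $f=c+a'$ with $c\in\C$ and $a'\in A_W$, and taking any $a\in A_W$, we get $fa=ca+a'a\in A_W$, since $A_W$ is a subspace closed under multiplication by its definition; thus $A_W$ is an ideal of $\C[z]$ of codimension one, necessarily a maximal ideal $(z-a_0)$ for some $a_0\in\C$. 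But then every element of $A_W$ vanishes at $a_0$, contradicting Lemma \ref{lemcomroot}. Hence $A_W=\C[z]$, as claimed.
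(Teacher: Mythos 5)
Your proof is correct, and its overall skeleton matches the paper's: Lemmas \ref{lemcomroot}, \ref{lemimm} and \ref{leminj} play exactly the roles you assign them, and your concluding step --- if $A_W \neq \C[z]$ then $A_W$ is a codimension-one ideal of $\C[z]$, hence of the form $(z-a_0)$, contradicting Lemma \ref{lemcomroot} --- is literally the paper's ending. Where you genuinely diverge is in the main step, proving $\C\cdot 1 + A_W = \C[z]$. The paper argues geometrically: using finite generation of the ideal of $\C[x,y]$ generated by $\{q(x)-q(y) : q \in A_W\}$, it selects finitely many $q_1,\dots,q_k \in A_W$ so that $F=(q_1,\dots,q_k)\colon \C\to\C^k$ is injective and immersive, then extends an arbitrary polynomial $g$, viewed as a regular function on the curve $F(\C)\subset\C^k$, to a regular function $G$ on all of $\C^k$ and pulls back, expressing $g$ as a polynomial in the $q_i$ and constants. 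You instead work intrinsically with $\tilde A=\C\cdot 1+A_W$: you establish finiteness of $\C[z]$ over $\tilde A$ (an Artin--Tate type observation), reinterpret the three lemmas as saying that $\Spec\C[z]\to\Spec\tilde A$ is finite, bijective, and unramified with trivial residue field extension, and conclude $\tilde A_{\mathfrak m}=\C[z]_{(z-a)}$ at every maximal ideal by Nakayama. The trade-off is instructive: the paper's route is shorter and more visual, but it silently uses that an injective immersive polynomial map $\C\to\C^k$ is an isomorphism onto a \emph{closed} subvariety --- this requires properness (automatic here, since some $q_i$ is nonconstant, but unremarked), and without it the function-extension step would fail. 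In your version this delicate point is absorbed into the finiteness of $\C[z]$ over $\tilde A$, which you verify explicitly at the outset, at the price of more commutative algebra (the identification of the local fiber ring $\C[z]\otimes_{\tilde A}\tilde A_{\mathfrak m}$ with $\C[z]_{(z-a)}$, which is where injectivity enters, plus the Nakayama argument); your birationality remark via generic fibres is actually not needed, since the local isomorphisms already force equality of fraction fields. Both arguments are sound; yours is the more self-contained of the two.
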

\begin{proof}
 The previous two lemmas show that there is a $k\in\N$ and polynomials $q_1$, ... ,$q_k \in A_W$ such that the map
\[F: \C \rightarrow \C^k: \qquad z \mapsto (q_1(z), \ldots, q_k(z))\]
is an injective and immersive embedding. To achieve injectivity take the ideal in $\C[x,y]$ generated by the polynomials $q(x)-q(y)$ with $q\in A_w$ which is
finitely generated by polynomials $q_1(x)-q_1(y)$, .. , $q_k(x)-q_k(y)$. Now we see that there are no $c_1 \neq c_2$ such that $q_i(c_1)=q_i(c_2)$ for all $i$,
otherwise we would have $q(c_1)=q(c_2)$ for all $q \in A_W$ which is not possible due to lemma $\ref{leminj}$. To guarantee immersivity we add for each cusp
singularity (finite number!) a polynomial $q \in A_W$ whose derivative doesn't vanish at this point (lemma $\ref{lemimm}$). \newline

Now take any polynomial function $g$ on $\C$ and regard it as a regular function on the by $F$ embedded $\C$ in $\C^k$. This function expands to a regular
function $G$ on $\C^n$ hence $G = a_0 + \sum_I a_I z_1^{i_1}\cdots z_k^{i_k}$. So if we pull back $G$ we get $g(z)=G(F(z))=a_0 + \sum_I a_I q_1(z)^{i_1}\cdots
q_k(z)^{i_k}$ so the algebra generated by $q_1, \ldots, q_k$ and constants is $\C[z]$. Now the algebra generated by $W$ is $\C[z]$ or a subspace with
codimension 1 and an ideal hence in the second case $W$ is a principle ideal generated by a polynomial $(z-a)$. But this case can't occur since $a$ would be a
common root of all elements of $W$ what is impossible (lemma $\ref{lemcomroot}$).
\end{proof}

Now we know that a hyperbolic field $HF_f$ is a Lie combination of shear fields if and only if $f=(pq)''$ for some polynomial $q$. In theorem $\ref{satzchar}$
it was shown that every volume preserving vector field is a linear combination of the vector fields $SF_i^x$, $SF_i^y$, $HF_f$, $[SF_i^x,HF_f]$ and
$[SF_i^y,HF_f]$ for $i\in\N_0$ and polynomials $f\in\C[z]$. To understand which vector fields are Lie combinations of shear fields it remains to study
the vector fields $[SF_i^x,HF_f]$ and $[SF_i^y,HF_f]$. 

\begin{proposition} \label{propmixedterms}
 All the vector fields $[SF_i^x,HF_f]$ and $[SF_i^y,HF_f]$ for $i\in\N_0$ and polynomials $f\in\C[z]$ are Lie combinations of shear fields.
\end{proposition}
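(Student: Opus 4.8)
The plan is to translate everything through the function dictionary of Lemma~\ref{lemi}. Writing $i_{HF_f}\omega=dF$ with $F'=f$ (Lemma~\ref{satzi}), one computes $i_{[SF_i^x,HF_f]}\omega=dL_{SF_i^x}(F)=d(x^{i+1}f)$ and likewise $i_{[SF_i^y,HF_f]}\omega=d(y^{i+1}f)$, so the proposition is equivalent to showing that for every $j\geq1$ the linear subspace $S_j:=\{q\in\C[z]:x^jq \text{ is a Lie combination of shear fields}\}$ equals $\C[z]$ (and symmetrically for $y$). I would first record two closure properties, both immediate from Lemma~\ref{lemi} together with the fact that the bracket of a shear field with a Lie combination of shear fields is again such a combination: applying $L_{SF_k^x}$ gives $q\in S_j\Rightarrow q'\in S_{j+k+1}$, while applying $L_{SF_0^y}$ to $x^{j+1}g$ and using $x^{j+1}y=x^jp$ gives the key relation $g\in S_{j+1}\Rightarrow (j+1)p'g+pg'\in S_j$. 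I would also note that Remark~\ref{bemoab} furnishes $(pq)'\in S_j$ for every $q$ and every $j\geq1$.

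The base case is $S_n=\C[z]$ for $n=\deg p$, which is precisely the first step already carried out in the proof of Lemma~\ref{lempn}. From it the first closure property propagates upward: given $h\in\C[z]$ and $m\geq n+1$, pick an antiderivative $g$ with $g'=h$; then $x^ng$ is a Lie combination and $L_{SF_{m-n-1}^x}(x^ng)=x^mh$, so $h\in S_m$. Hence $S_m=\C[z]$ for all $m\geq n$, and it remains to descend to the powers $1\le j\le n-1$.

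The crux is the downward step, where I expect the real work and the essential use of the hypothesis that $p$ has only simple zeros. Assume $S_{j+1}=\C[z]$ with $j\geq1$. Remark~\ref{bemoab} gives $p'g+pg'=(pg)'\in S_j$ for every $g$, while the key relation gives $(j+1)p'g+pg'\in S_j$; subtracting and dividing by $j$ (legitimate since $j\geq1$) yields $p'g\in S_j$, hence $p'\C[z]\subseteq S_j$. Subtracting $p'g$ from $(pg)'$ then gives $pg'\in S_j$ for all $g$, hence $p\C[z]\subseteq S_j$. Because $p$ has simple zeros we have $\gcd(p,p')=1$, so $p\C[z]+p'\C[z]$ is the unit ideal; therefore $S_j=\C[z]$. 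Downward induction from $S_n=\C[z]$ now gives $S_j=\C[z]$ for all $j\geq1$, and the symmetric argument with $SF_0^x$ and $y$ handles the $y^{i+1}f$ terms.

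The main obstacle, and the reason a power-by-power approach fails, is that the differentiation route of Lemma~\ref{lemklammern}(\ref{ableitung}) only fills $S_j$ completely once $j$ is large: for a small power of $x$ it cannot reach the low-degree coefficient functions in $z$. The device that dissolves this is the descent via $SF_0^y$, which feeds multiples of both $p$ and $p'$ into $S_j$ simultaneously; the coprimality of $p$ and $p'$ then collapses these two families to all of $\C[z]$ at once, which is exactly the step that would break down if $p$ had a multiple root.
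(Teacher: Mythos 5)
Your proposal is correct and takes essentially the same route as the paper's own proof: the base case $S_n=\C[z]$ from the first step of Lemma \ref{lempn}, descent in the power of $x$ via the bracket with $SF_0^y$, subtraction against $(pg)'\in S_j$ supplied by Remark \ref{bemoab}, and finally the coprimality of $p$ and $p'$ (simple zeros) to conclude $S_j=\C[z]$. The only cosmetic difference is that the paper runs the descent on monomials $z^j$ while you work with general $g$ and the named subspaces $S_j$ (and you make the easy upward propagation $S_m=\C[z]$ for $m\geq n$ explicit), but the substance is identical.
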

\begin{proof}
 Since $i_{[SF_{i-1}^x,HF_{f}]}\omega =dx^i f$ it suffices to see that the polynomial $x^i f(z)$ corresponds for every $i\in\N$ and every $f\in\C[z]$ to a Lie
combination of shear fields. In the proof of lemma $\ref{lempn}$ we saw that this is true for $i\geq n =\deg p$. So we already have every $x^n z^j$ for
$j\in\N$. If one takes the Lie bracket with the vector field $SF_0^y$ one gets with the calculation in the proof of lemma $\ref{lempolynom}$ (a) the polynomial
$x^{i}((i+1)p'(z)z^j + jp(z)z^{j-1})$ for $i=n-1$. Since every polynomial $(p(z)z^j)'$ corresponds to a Lie combination of shear fields, so does the polynomial
$x^{i}(p(z)z^j)'=x^{i}(p'(z)z^j + jp(z)z^{j-1})$ (due to remark $\ref{bemoab}$). After a suitable linear combination of this two polynomials it follows that
$x^{i}p'(z)z^j$ and $x^{i}p(z)z^{j-1}$ correspond to a Lie combination of shear fields for all $j$. Therefore every $x^{i}f(z)$ with $f(z) \in (p) \cup (p')
\subset \C[z]$ belongs to a Lie combination. Since $p$ and $p'$ have no common zeros it is true that $(p) \cup (p') = \C[z]$ and the claim is shown for $i=n-1$.
Repeat the same procedure for $i=n-2, \ldots, 1$ and the claim is shown for every $i \in \N$.
\end{proof}

Now we have to make the final step allowing not only shear fields but also LNDs in our Lie combination. Since LNDs are shears conjugated by compositions of
shear automorphisms
(see theorem $\ref{shearlnd}$) the following lemma will do the job. 
\begin{lemma} \label{lemconjugateshear}
 Let $\phi: D_p \rightarrow D_p$ be a shear automorphism and let $\Theta$ be a Lie combination of shear fields. Then $\phi^*\Theta$ is a Lie combination of
shear fields.
\end{lemma}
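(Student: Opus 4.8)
The plan is to exploit that conjugation by $\phi$ is a Lie algebra homomorphism and then reduce everything to the two generating families of shear fields. Since the pullback of vector fields by an automorphism preserves the bracket, $\phi^*[\Theta_1,\Theta_2]=[\phi^*\Theta_1,\phi^*\Theta_2]$, and $\phi^*$ is linear. By Lemma \ref{lemschachtelung} a Lie combination of shear fields is a linear combination of iterated brackets $[A_n,[\ldots,[A_1,A_0]\ldots]]$ with each $A_i$ a shear field, so $\phi^*$ sends such a combination to a linear combination of iterated brackets of the fields $\phi^*A_i$. As the Lie algebra generated by the shear fields is closed under brackets, it therefore suffices to prove that $\phi^*SF_i^x$ and $\phi^*SF_i^y$ are Lie combinations of shear fields for every $i$.

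First I would move the problem to the level of functions. Assume $\phi$ is an $x$-shear (the $y$-shear case is analogous, interchanging the roles of $x$ and $y$); by Definition \ref{shear} it has the form $\phi(x,y,z)=(x,\tfrac{p(z+s(x))}{x},z+s(x))$ for a polynomial $s$ divisible by $x$. Because $\omega$ is $\phi$-invariant, Remark \ref{conj} gives $i_{\phi^*\Theta}\omega=\phi^*(i_\Theta\omega)=d(f\circ\phi)$ whenever $i_\Theta\omega=df$, so the function attached to $\phi^*\Theta$ is just $f\circ\phi$. As $\phi$ fixes $x$, Lemma \ref{satzi} immediately shows that $\phi^*SF_i^x$ corresponds to $-x^{i+1}/(i+1)$, i.e. $\phi^*SF_i^x=SF_i^x$. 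For the $y$-field, Lemma \ref{satzi} gives that $\phi^*SF_i^y$ corresponds to $\tfrac{1}{i+1}(y\circ\phi)^{i+1}=\tfrac{1}{i+1}\,p(z+s(x))^{i+1}x^{-(i+1)}$. By Theorem \ref{satzchar}, Proposition \ref{propmixedterms} and the fact that $HF_{(pq)''}$ is a Lie combination of shear fields (Lemma \ref{lempn}, Proposition \ref{propAW}), a volume preserving field is a Lie combination of shear fields whenever the $z$-only part $c(z)$ of its associated function has the form $(pq)'$: indeed, in the canonical form (1) every summand carrying a positive power of $x$ or of $y$ corresponds to a shear field or to a mixed bracket $[SF^x_\bullet,HF_\bullet]$, $[SF^y_\bullet,HF_\bullet]$, all of which are Lie combinations, while $c(z)=(pq)'$ corresponds to $HF_{(pq)''}$. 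Hence the lemma reduces to showing that the $z$-part of $p(z+s(x))^{i+1}x^{-(i+1)}$ is of the form $(pq)'$.

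The heart of the matter, and the step I expect to be the main obstacle, is this last computation. Writing $m=i+1$ and expanding $p(z+s(x))^m=\sum_{k\ge0}P_k(z)x^k$ as a polynomial in $x$, the coefficient of $x^0$ in $p(z+s(x))^m x^{-m}$, which is precisely the $z$-only part in the canonical form, equals $P_m(z)$. To identify $P_m$ I would produce an explicit antiderivative: if $R$ denotes an antiderivative of $t\mapsto p(t)^m$, then $\widetilde P(z):=[x^m]\,R(z+s(x))$ satisfies $\widetilde P{}'=P_m$, since extraction of the $x^m$-coefficient commutes with $\partial_z$. It then remains to check that $\widetilde P$ is divisible by $p$, and because $p$ has simple roots it is enough that $\widetilde P(z_k)=0$ at every root $z_k$ of $p$. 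Expanding $R$ in its Taylor series about $z_k$ yields $\widetilde P(z_k)=\sum_{\ell=0}^{m}\tfrac{R^{(\ell)}(z_k)}{\ell!}[x^m]s(x)^\ell$; the term $\ell=0$ vanishes because $[x^m]1=0$ (as $m\ge1$), and each term with $1\le\ell\le m$ vanishes because $R^{(\ell)}=(p^m)^{(\ell-1)}$ is evaluated at the $m$-fold zero $z_k$ of $p^m$ with $\ell-1\le m-1$. Thus $\widetilde P=pq$ for a polynomial $q$ and $P_m=(pq)'$, which finishes the reduction and hence the proof.
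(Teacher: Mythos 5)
Your proof is correct, but it takes a genuinely different route from the paper's. The paper deduces the lemma in one line from a general fact (Lemma \ref{lemclosed}): a shear automorphism $\phi$ is the time-one flow of an LND $\Theta_0$ (a linear combination of the commuting fields $SF_j^x$, resp.\ $SF_j^y$), and for an LND the pullback $\phi^*\Psi$ of any algebraic field $\Psi$ is given by the \emph{finite} Taylor series $\sum_{k=0}^{n}\frac{1}{k!}(\mathrm{ad}_{\Theta_0})^k(\Psi)$ in the flow parameter, hence lies in the Lie algebra generated by $\Theta_0$ and $\Psi$; applied with $\Psi$ a Lie combination of shear fields, this is the claim. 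You instead reduce to the generators $SF_i^x$, $SF_i^y$ via the homomorphism property of $\phi^*$, pass to associated functions through $i_{\phi^*\Theta}\omega=d(f\circ\phi)$, note $\phi^*SF_i^x=SF_i^x$ for an $x$-shear, and then verify by hand that the absolute term ($x^0$-Laurent coefficient) of $(y\circ\phi)^{i+1}/(i+1)$ is of the form $(pq)'$, so that the sufficiency criterion already established (Propositions \ref{propAW} and \ref{propmixedterms}) applies; your antiderivative/coefficient-extraction argument showing $[x^m]\,p(z+s(x))^m=(pq)'$ with $q$ divisible-by-$p$ detected at the simple roots is correct (and the identification of the $z$-part of the canonical form $(1)$ with the $x^0$-Laurent coefficient matches what the paper itself uses in Corollary \ref{dense}). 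What the two approaches buy: the paper's argument is shorter, needs nothing from the characterization machinery of Section 3.2 (so no risk of circularity and it works on any affine variety for conjugation by any LND flow); yours is heavier, leaning on Propositions \ref{propAW} and \ref{propmixedterms}, but it is still non-circular (those results precede the lemma) and it gives something the paper's proof does not: an explicit formula for the function of the conjugated field, directly confirming that the class of fields described in Theorem \ref{final} is invariant under shear conjugation. One cosmetic point: the general form $\phi(x,y,z)=(x,\frac{p(z+s(x))}{x},z+s(x))$ with $s$ divisible by $x$ comes from Theorem \ref{makar} (the automorphisms $\Delta_f$), not from Definition \ref{shear}, which only yields $s(x)=x^{i+1}$; your proof covers the general case, which is what is actually needed.
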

The proof of this lemma follows immediately from the following general fact.

\begin{lemma} \label{lemclosed}
Let $\Theta$ be an  LND  with flow $\phi_t$ and $\Psi$ any algebraic vector field.  Then for any fixed $t$ the vector field $(\phi_t)^* (\Psi)$ is contained in
the Lie algebra generated by
$\Theta$ and $\Psi$.
\end{lemma}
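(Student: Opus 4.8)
The plan is to realize $(\phi_t)^*(\Psi)$ as a \emph{terminating} Lie series in the operator $\mathrm{ad}_\Theta=[\Theta,\,\cdot\,]$ applied to $\Psi$. The guiding formula is
\[
(\phi_t)^*(\Psi)=\sum_{k\ge 0}\frac{t^k}{k!}\,(\mathrm{ad}_\Theta)^k(\Psi);
\]
each summand is an iterated bracket of $\Theta$ and $\Psi$, hence lies in $\mathrm{Lie}(\Theta,\Psi)$, and for a fixed $t$ the whole expression is then a finite linear combination of such brackets, which is what we want. The real content of the lemma is therefore the assertion that this series terminates, and this is precisely where the hypothesis that $\Theta$ is an \emph{LND} — rather than an arbitrary algebraic field — enters.

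First I would exploit that, by definition, the flow $\phi_t$ of the LND $\Theta$ is an algebraic $\C^+$-action, so that $(t,p)\mapsto\phi_t(p)$ and $(t,p)\mapsto\phi_t^{-1}(p)=\phi_{-t}(p)$ are polynomial in $t$ with coefficients regular on $D_p$. Substituting these into the defining formula $(\phi_t^*\Psi)_p=(D\phi_t^{-1})_{\phi_t(p)}(\Psi_{\phi_t(p)})$ from Remark \ref{conj}, and using that $\Psi$ has regular coefficients, shows that $(\phi_t)^*(\Psi)$ is itself polynomial in $t$: there is an integer $N$ and algebraic vector fields $\Xi_0,\dots,\Xi_N\in\AVF(D_p)$ with
\[
(\phi_t)^*(\Psi)=\sum_{k=0}^N t^k\,\Xi_k .
\]
This finiteness is the crucial step, and I expect it to be the main (though not deep) obstacle: one must check that differentiating $\phi_{-t}$ in the base variable and substituting $\phi_t(p)$ keeps everything polynomial in $t$, which holds automatically precisely because the flow is algebraic in $t$.

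It then remains to identify the coefficients $\Xi_k$. Using the group law $\phi_{t+s}=\phi_t\circ\phi_s$, which in the convention of Remark \ref{conj} (where $\phi^*$ is the pushforward by $\phi^{-1}$) gives $\phi_{t+s}^*=\phi_s^*\circ\phi_t^*$, together with the elementary relation $\frac{d}{ds}\big|_{s=0}\phi_s^*(\Lambda)=[\Theta,\Lambda]$ (the Lie-derivative identity $L_\Theta\Lambda=[\Theta,\Lambda]$; the sign comes out right after a routine check of the convention), one obtains the differential equation
\[
\frac{d}{dt}(\phi_t)^*(\Psi)=\bigl[\Theta,(\phi_t)^*(\Psi)\bigr].
\]
Plugging in the finite expansion above and comparing coefficients of $t^k$ yields the recursion $(k+1)\Xi_{k+1}=[\Theta,\Xi_k]$ with $\Xi_0=\Psi$, whence $\Xi_k=\frac{1}{k!}(\mathrm{ad}_\Theta)^k(\Psi)\in\mathrm{Lie}(\Theta,\Psi)$ for every $k$.

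Finally I would conclude: for any fixed value of $t$ the identity $(\phi_t)^*(\Psi)=\sum_{k=0}^N\frac{t^k}{k!}(\mathrm{ad}_\Theta)^k(\Psi)$ exhibits $(\phi_t)^*(\Psi)$ as a finite linear combination of iterated Lie brackets of $\Theta$ and $\Psi$, hence as an element of the Lie algebra generated by $\Theta$ and $\Psi$, as required. As a by-product the argument shows $(\mathrm{ad}_\Theta)^k(\Psi)=0$ for $k>N$, i.e.\ $\mathrm{ad}_\Theta$ is locally nilpotent on $\AVF(D_p)$, which one could equally well take as the starting point. Lemma \ref{lemconjugateshear} follows at once by applying this with $\Theta$ the shear field whose time-one flow is the given shear automorphism and $\Psi$ the given Lie combination of shear fields.
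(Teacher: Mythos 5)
Your proof is correct and is essentially the paper's own argument: the paper likewise observes that, because the flow of an LND is algebraic in $t$, the expansion $(\phi_t)^*(\Psi)=\sum_k \frac{t^k}{k!}(\mathrm{ad}_\Theta)^k(\Psi)$ is a polynomial in $t$, hence a finite Lie combination of $\Theta$ and $\Psi$. You merely fill in the details the paper leaves implicit (polynomiality of $(\phi_t)^*(\Psi)$ from Remark \ref{conj}, and the identification of the Taylor coefficients via the flow ODE and recursion), which is a faithful elaboration rather than a different route.
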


\begin{proof}

Since $\Theta$ is an LND the Taylor expansion of $(\phi_t)^* (\Psi)$ with respect to the variable $t$ around $t_0 = 0$ 

$$(\phi_t)^* (\Psi) = \Psi + t [\Theta, \Psi] + \frac{1}{2} t^2 [\Theta, [\Theta, \Psi]] + \ldots + \frac{1}{n !} t^n [\Theta,[\Theta \ldots [\Theta, \Psi]]
\ldots]$$
is a polynomial in $t$. This implies the claim.
\end{proof}

Thus we can now proof the main result.

\begin{theorem} \label{final}
 A volume preserving vector field $\Theta$ on the Danielewski surface $D_p$ is a Lie combination of LNDs if and only if its corresponding function with
$i_\Theta\omega = df$ is of the form (modulo constant)
\[ f(x,y,z) = \sum_{\substack{i=1\\j=0}} ^{k}a_{ij} x^i z^j + \sum_{\substack{i=1\\j=0}}^{l}b_{ij} y^i z^j + (pq)'(z)
\] 
for a polynomial $q\in\C[z]$.
\end{theorem}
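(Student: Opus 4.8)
The plan is to first reduce the statement about LNDs to one about shear fields, and then assemble the correspondence-level computations from the preceding lemmas. By Theorem \ref{shearlnd} every LND is a shear field conjugated by a composition of shear automorphisms, and by Lemma \ref{lemconjugateshear} the conjugate of a Lie combination of shear fields by a shear automorphism is again a Lie combination of shear fields. Iterating over the factors of the conjugating automorphism, every LND lies in the Lie algebra generated by the shear fields; since conversely the shear fields are themselves LNDs, the Lie algebra generated by LNDs coincides with the Lie algebra generated by shear fields. Thus it suffices to characterize those functions $f$ (modulo constants) for which the associated vector field $\Theta$, with $i_\Theta\omega = df$, is a Lie combination of shear fields.

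For the sufficiency direction I would use the unique decomposition of $f$ as in formula $(1)$, splitting it into its $x$-part $\sum a_{ij}x^iz^j$, its $y$-part $\sum b_{ij}y^iz^j$ (both with $i\ge 1$), and its pure $z$-part, which by hypothesis equals $(pq)'(z)$. Each monomial $x^iz^j$ with $i\ge 1$ is $i_{[SF_{i-1}^x,HF_{z^j}]}\omega$-exact, and Proposition \ref{propmixedterms} shows that every such $[SF_{i-1}^x,HF_{z^j}]$ is a Lie combination of shear fields; the same holds for the $y$-part. For the pure $z$-part, note that $HF_{(pq)''}$ corresponds to the function $(pq)'$, and that the subspace $W\subset\C[z]$ of all $g$ with $HF_g$ a Lie combination of shear fields satisfies properties (i)--(iv); since Proposition \ref{propAW} gives $A_W=\C[z]$, property (iv) forces $(pq)''\in W$ for every $q$, so $HF_{(pq)''}$ is indeed a Lie combination of shear fields. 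Adding the three pieces shows that $\Theta$ is a Lie combination of shear fields, hence of LNDs.

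For the necessity direction I would invoke Lemma \ref{lemschachtelung} to write any Lie combination of shear fields as a linear span of nested brackets $[A_n,[\dots[A_1,A_0]\dots]]$ with the $A_i$ shear fields. Lemma \ref{lempolynom} shows that the function attached to such a nested bracket is of type (a) $x^jq(z)$, (b) $y^jq(z)$, or (c) $q(z)$, and Lemma \ref{f} refines case (c) to $q(z)=(p\tilde q)'$ for some $\tilde q\in\C[z]$. Passing to a linear combination and reading off the \emph{unique} decomposition $(1)$, the pure $z$-part receives contributions only from the type (c) summands, because types (a) and (b) carry a factor $x^j$ resp.\ $y^j$ with $j\ge 1$; since a sum $\sum_i (p\tilde q_i)' = (p\sum_i\tilde q_i)'$ is again of this form, the pure $z$-part of $f$ must equal $(pq)'$ for a single polynomial $q$, while the $x$- and $y$-parts are unconstrained. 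This is exactly the asserted shape of $f$.

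The bulk of the work is already contained in the cited results, so the main theorem is essentially a bookkeeping assembly; the two points requiring care are the reduction from LNDs to shear fields in the first paragraph (making sure conjugation by a \emph{composition} of shears, not a single shear, stays inside the Lie algebra) and, in the necessity direction, the use of the uniqueness of the expansion $(1)$ to guarantee that type (a) and (b) brackets cannot contribute to the pure $z$-part. This isolation is precisely what pins the absolute term down to the form $(pq)'$.
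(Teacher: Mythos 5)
Your proposal is correct and follows essentially the same route as the paper's own proof: the same reduction of LNDs to shear fields via Theorem \ref{shearlnd} and Lemma \ref{lemconjugateshear}, the same sufficiency ingredients (Proposition \ref{propmixedterms}, Proposition \ref{propAW} feeding property (iv), Lemma \ref{lemklammern}), and the same necessity argument via Lemmas \ref{lemschachtelung}, \ref{lempolynom} and \ref{f} together with the uniqueness of the expansion $(1)$. The only difference is expository: the paper compresses all of this into a two-line citation, whereas you spell out the bookkeeping explicitly.
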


\begin{proof}
 By Proposition \ref{propAW} together with Lemma \ref{lemklammern} (\ref{lemklammernprodukte}) and Proposition \ref{propmixedterms} the Lie algebra generated by
shear fields consists exactly of those volume preserving fields described in the theorem. By Theorem \ref{shearlnd} any LND $\Theta$ is conjugated to a shear
field $S$ by an automorphism $\psi$ which is a finite composition of shear automorphisms $\psi = \alpha_m \circ \ldots \circ \alpha_1$. Thus by Lemma
\ref{lemconjugateshear}  $\Theta=\psi^*S=\alpha_1^*(\ldots \alpha_{m-1}^*(\alpha_m^*S)\ldots)$ is contained in the Lie algebra generated by shear fields.
\end{proof}


\end{document}